\DeclareMathAlphabet{\mathpzc}{OT1}{pzc}{m}{it}
\title{
Spurious pressure in Scott-Vogelius  elements
}
\author{
Chunjae Park
\thanks{Department of Mathematics,
Konkuk University, Seoul 05029, Korea, 
\hspace{1mm} Email: cjpark@konkuk.ac.kr}}
\begin{document}

\newtheorem{theorem}{Theorem}[section]
\newtheorem{remark}[theorem]{Remark}
\newtheorem{lemma}[theorem]{Lemma}
\newtheorem{proposition}[theorem]{Proposition}
\newtheorem{definition}[theorem]{Definition}
\newtheorem{assumption}{Assumption}[section]

\def\R{\mathbb R}
\def\O{\Omega}
\def\p{\partial}
\def\Th{\mathcal{T}_h}
\def\Cs{C_{\sigma}}
\def\vts{\vartheta_{\sigma}}
\def\disp{\displaystyle}
\def\div{\mathrm{div}\hspace{0.5mm}}
\def\curl{\mathbf{curl}\hspace{0.5mm}}

\def\alp{\alpha}
\def\bet{\beta}
\def\gam{\gamma}
\def\del{\delta}
\def\lam{\lambda}

\def\m{\mathpzc{m}}
\def\n{\mathbf{n}}
\def\t{\boldsymbol{\tau}}
\def\u{\mathbf{u}}
\def\v{\mathbf{v}}
\def\w{\mathbf{w}}
\def\x{\mathbf{x}}
\def\y{\mathbf{y}}
\def\z{\mathbf{z}}
\def\B{\mathcal{B}}
\def\C{\mathbf{C}}
\def\M{\mathbf{M}}
\def\G{\mathbf{G}}
\def\P{\mathbf{P}}
\def\V{\mathbf{V}}
\def\W{\mathbf{W}}
\def\X{\mathbf{X}}
\def\Y{\mathbf{Y}}

\def\bxi{\boldsymbol{\xi}}
\def\st{s}
\def\ib{g}

\def\sg{\mathpzc{s}}
\def\Sg{\mathpzc{S}}
\def\f{\mathpzc{e}}
\def\ii{\mathbf{i}}
\def\th{\theta}
\def\dh{{\theta}}
\def\hp{\widehat{p}}
\def\he{\widehat{e}}
\def\tp{\widetilde{p}}

\def\SS{\mathcal{S}}
\def\NN{\mathcal{N}}
\def\VV{\mathcal{V}}
\def\EE{\mathcal{E}}
\def\upnorm{|\mathbf{u}|_5 + |p|_4}
\def\AreaV{B(\V)}
\def\VVone{|\overline{\V\V_1}|}
\def\VVtwo{|\overline{\V\V_2}|}
\def\Q{\mathpzc{Q}}
\def\bR{\mathbf{R}}

\def\alpt{\widetilde{\alpha}}
\def\bett{\widetilde{\beta}}
\def\alpe{\hat{\alp}}
\def\bete{\hat{\bet}}
\def\te{\tilde{e}}
\def\aa{a}
\def\bb{b}
\def\pskip{\hspace{1pt}}
\def\mmskip{\vspace{1mm}}
\def\osh{\overline{s_h}}

\newcommand {\snorm}[2] {| #1 |_{#2}}
\newcommand {\norm}[2] {\| #1 \|_{#2}}
\newcommand{\Evec}[2]{\overrightarrow{#1#2}}
\newcommand{\vertiii}[1]{{\left\vert\kern-0.25ex\left\vert\kern-0.25ex\left\vert #1 
        \right\vert\kern-0.25ex\right\vert\kern-0.25ex\right\vert}}

\maketitle
\begin{abstract}
  We will analyze the characteristics of Scott-Vogelius finite elements
  on singular vertices, which cause spurious pressures
  on solving Stokes equations. A simple  postprocessing
  will be suggested to remove those spurious pressures.
\end{abstract}

\section{Introduction}
The Scott-Vogelius element is the typical high order finite element space
which can be applied to solve Stokes problems.
Its inf-sup condition was proved in several ways,
only when the triangulation has no singular vertex
\cite{Falk2013, Guzman2018, Scott1985}. While it struggles with singular vertices,
the inf-sup constant $\beta$ is not proper even in case of nearly singular vertices.

In practice,
when the mesh has a nearly singular vertex,
the discrete solution in pressure shows an error
which is improper at a glance as in Figure \ref{fig:pandph}
in the numerical test section.
In this paper, we will call it spurious and analyze its causes.
%to remove it by simple postprocessing. 

The punchline of the paper is splitting of the error in stable and unstable parts
on nearly singular vertices.
We will suggest a simple postprocessing to remove the unstable parts
from the discrete pressure obtained by the standard finite element methods.
The suggested postprocessing could improve the error
even in case of regular vertices.

In our analysis, a cubic polynomial depicted in Figure \ref{fig:spu_ref}
plays a key role with its interesting quadrature rule.
Spurious pressures consist of those polynomials 
 at singular or nearly singular vertices.
Although, in this paper, we deal with only the Scott-Vogelius elements
of the lowest order
in two dimensional domains, we might start its extension to general order
if we find such a polynomial there. 

For three dimensional Scott-Vogelius elements,
the general extension
 identifying singular vertices and edges
 is  still on its way, in spite of some results on it 
 \cite{Neilan2015, Zhang2007, Zhang2011}.

The paper is organized as follows.
In the next two sections, the quasi singular vertices and Scott-Vogelius elements
will be introduced.
In section \ref{sec:spu}, we will show that the discrete Stokes problem is singular
due to the presence of spurious pressures,
if the mesh has exactly singular vertices.
In case of quasi singular vertices, the spurious component of the error in pressure
will be identified in  section  \ref{sec:errprs}
utilizing a new basis of pressure designed in section \ref{sec:basis}.
Then, we will  devote section \ref{sec:remove} to removing the spurious error
from the discrete pressure.
Finally, some numerical tests will be presented in the last section.

Throughout the paper,
$|\x|$ denotes an area or length if $\x$ is a triangle, edge or vector
and
$\# S$ does the cardinality of a set $S$.

 \section{Quasi singular vertex}
Let $\O$ be a connected polygonal domain in $\R^2$ and 
 $\{\Th\}_{h>0}$ a regular family of triangulations of $\O$
with a shape regularity parameter $\sigma>0$.
Denote by $\VV_h, \EE_h$,
the sets of all vertices and edges in $\Th$, respectively.
If a vertex $\V\in\VV_h$ belongs to $\p\O$, we call it a boundary vertex,
otherwise, an interior vertex. Similarly, an edge $E\in\EE_h$ is called
a boundary edge if $E\subset \p\O$,
otherwise, an interior edge.

A vertex $\V\in\VV_h$ is called singular or exactly singular
if two lines are enough to cover
all edges sharing $\V$ as in Figure \ref{fig:extsingvtx}.
\begin{figure}[ht]
  \hspace{3cm}
\includegraphics[width=0.6\linewidth]{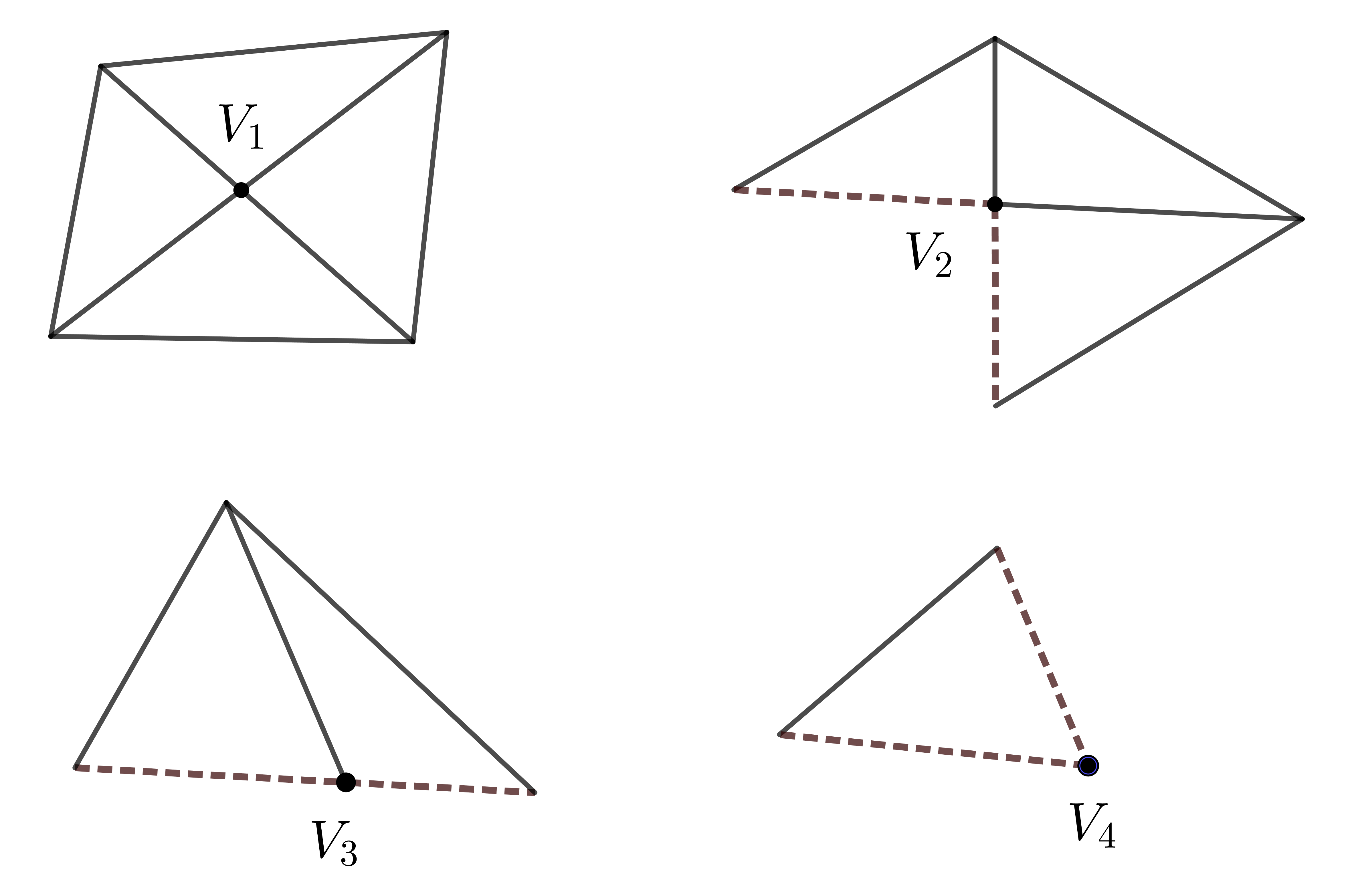}
\caption{Four types of exactly singular vertices $\V_1, \V_2, \V_3, \V_4$
  (dashed edges belong to $\p\O$.)}
\label{fig:extsingvtx}
\end{figure}
For each vertex $\V$, denote by
$\Upsilon(\V)$, the set of all sums of two adjacent angles of $\V$
in two back-to back triangles in $\Th$.
Then $\Upsilon(\V)=\{\pi\} \mbox{ or } \emptyset$ if and only if $\V$ is singular.
For examples, in Figure \ref{fig:extsingvtx},
$$\Upsilon(\V_1)=\Upsilon(\V_2)=\Upsilon(\V_3)=\{\pi\},\quad \Upsilon(\V_4)=\emptyset.$$

Since $\{\Th\}_{h>0}$ is regular, there exists $\vartheta>0$ such that
\[ \vartheta=\inf\{ \theta\ |\  \theta \mbox{ is an angle of a triangle } K\in\Th,
  h>0  \}.  \]
Set
\begin{equation}\label{def:vts}
  \vartheta_{\sigma} =\min(\vartheta, \pi/6),
\end{equation}
then $\vts$ depends on
the shape regularity parameter  $\sigma$ of $\{\Th\}_{h>0}$. 
From \eqref{def:vts},
we note that every angles $\theta$ of a triangle $K$ in $\Th$ satisfies that 
\begin{equation}\label{def:vts2}
 \vts \le  \theta \le  \pi-2\vts.
\end{equation}

We will call a vertex $\V\in\VV_h$ quasi singular
if it is singular or nearly singular.
For quantification, define a set 
\begin{equation}\label{def:sing}
  \mathcal{S}_h=\{ \V\in\VV_h\ :\ 
  |\Theta -\pi| < \vts \mbox{ for all } \Theta\in\Upsilon(\V)\}.
\end{equation}
Then, 
we call a vertex $\V$ quasi singular if $\V\in\SS_h$, otherwise regular.
In Figure \ref{fig:singvtx}, examples of quasi but not exactly singular vertices
are depicted.
Interior quasi singular vertices are slight perturbations
of exactly singular ones. It results in the following lemma:
\begin{figure}[ht]
  \hspace{3cm}
\includegraphics[width=0.6\linewidth]{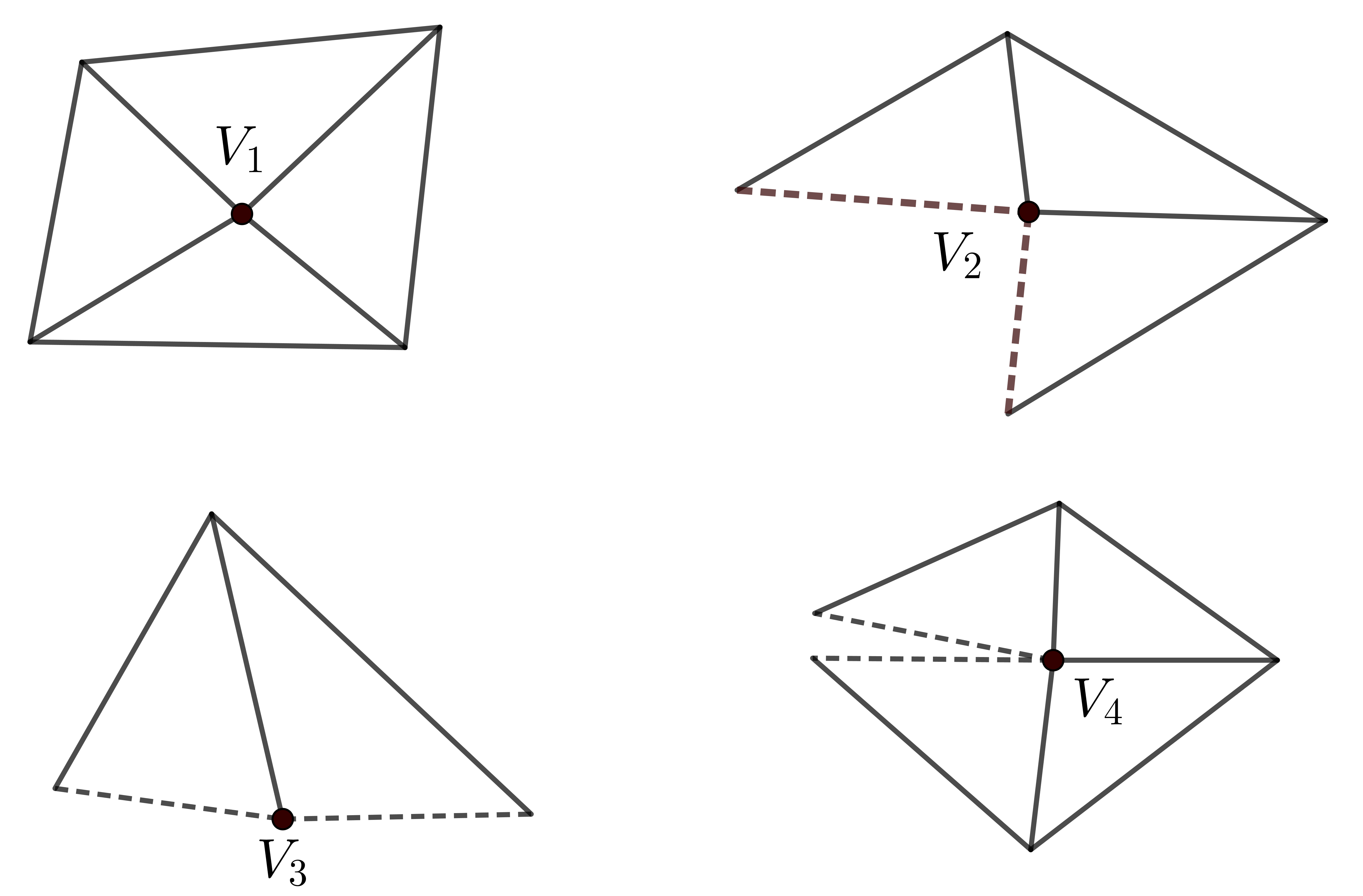}
\caption{Quasi but not exactly singular vertices $\V_1, \V_2, \V_3, \V_4$
  (dashed edges belong to $\p\O$.)}
\label{fig:singvtx}
\end{figure}
\begin{lemma}\label{lem:insing4}
  If  $\V$ is an interior quasi singular vertex,
  then the number of all triangles sharing $\V$ is $4$.
\end{lemma}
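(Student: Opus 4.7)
The plan is a direct angle-counting argument around $\V$. Let $n$ denote the number of triangles sharing $\V$, and let $\theta_1,\ldots,\theta_n$ be their angles at $\V$ listed in the cyclic order in which the triangles appear around $\V$. Since $\V$ is an interior vertex, $\sum_{i=1}^n \theta_i = 2\pi$, and each $\theta_i$ satisfies $\vts \le \theta_i \le \pi - 2\vts$ by \eqref{def:vts2}.

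The next step is to identify $\Upsilon(\V)$ in this cyclic setting. Every two consecutive triangles in the cyclic order share an edge with endpoint $\V$, so they form a ``back-to-back'' pair in the sense of the definition preceding \eqref{def:sing}. Hence $\Upsilon(\V)$ contains all the cyclic neighbor sums $\theta_i + \theta_{i+1}$ (indices mod $n$). The assumption $\V \in \SS_h$ then gives
\begin{equation*}
  |\theta_i + \theta_{i+1} - \pi| < \vts \qquad \text{for } i = 1,\ldots,n.
\end{equation*}

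Summing these $n$ inequalities and noting that each $\theta_i$ appears exactly twice on the left-hand sum yields $|4\pi - n\pi| < n\vts$, i.e., $|n-4| < n\vts/\pi$. Plugging in $\vts \le \pi/6$ from \eqref{def:vts} gives $|n-4| < n/6$, which forces $24/7 < n < 24/5$. The only integer in this range is $n=4$.

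The only real subtlety is the bookkeeping in the second step: one must verify that at an \emph{interior} quasi singular vertex $\Upsilon(\V)$ really contains every cyclic neighbor sum (rather than some restricted subcollection), so that summing over $i$ is legitimate. Once this is settled, the bound $\vts \le \pi/6$ built into the definition carries the argument with no further work.
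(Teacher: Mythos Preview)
Your argument is correct. The summing step is justified by the triangle inequality: from $|\theta_i+\theta_{i+1}-\pi|<\vts$ for each $i$ one gets
\[
\Big|\sum_{i=1}^n(\theta_i+\theta_{i+1})-n\pi\Big|\le\sum_{i=1}^n|\theta_i+\theta_{i+1}-\pi|<n\vts,
\]
and the left side equals $|4\pi-n\pi|$, so $|n-4|<n\vts/\pi\le n/6$, forcing $n=4$.

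The paper's own proof uses the same ingredients but proceeds by cases. For $N\ge 5$ it bounds the \emph{minimum} cyclic sum $\Theta$ from above by $4\pi/N\le 4\pi/5$ and from below by $\pi-\vts$, obtaining a contradiction with $\vts\le\pi/6$; for $N=3$ it uses the single-angle upper bound $\theta_3\le\pi-2\vts$ from \eqref{def:vts2} to get $\theta_1+\theta_2\ge\pi+2\vts$, contradicting membership in $\SS_h$. Your approach packages both cases into one inequality via the triangle inequality, which is slightly slicker; the paper's version has the minor advantage of making explicit which hypothesis (the lower bound $\pi-\vts$ on pair sums, versus the upper angle bound $\pi-2\vts$) does the work in each direction.
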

\begin{proof}
  Let $N$ be the number of all triangles sharing $\V$ and
  $\theta_1, \theta_2, \cdots, \theta_N$ back-to-back angles of $\V$.
Set
  \[ \Theta=\min\{ \theta_1 + \theta_2, \theta_2 + \theta_3, \cdots,
    \theta_N + \theta_1\}.\]
  Then,
  \begin{equation}\label{eq:NThetaV}
    N\Theta \le 2 \sum_{i=1}^N \theta_i = 4\pi.
  \end{equation}
  If $N\ge 5$, then \eqref{def:sing} and \eqref{eq:NThetaV}
  makes the following contradiction to $\vts \le \pi/6$
  in \eqref{def:vts}:
 \[ \pi -\vts < \Theta \le \frac45 \pi.\] 
If $N=3$, we have  from \eqref{def:vts2},  
  \[ \theta_1+ \theta_2 = 2\pi-\theta_3 \ge 2\pi -(\pi-2\vts) = \pi+2\vts. \]
It contradicts to $\V\in\SS_h$.
\end{proof}

Each interior quasi singular vertex in $\SS_h$ is isolated from
others in $\SS_h$ in the sense of the following lemma.
\begin{lemma}\label{lem:isolvtx}
  There is no interior edge connecting two quasi singular vertices in $\SS_h$.
\end{lemma}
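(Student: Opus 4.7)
The plan is to argue by contradiction, exploiting the angle budget of the two triangles flanking the alleged connecting edge. Suppose for contradiction that $E=\overline{\V_1\V_2}$ is an interior edge with $\V_1,\V_2\in\SS_h$. Since $E$ is interior, there are exactly two triangles $K_1,K_2\in\Th$ sharing $E$. I would denote by $\alpha_i$ (respectively $\beta_i$) the angle of $K_i$ at $\V_1$ (respectively $\V_2$), and by $\gamma_i$ the remaining angle of $K_i$, for $i=1,2$.

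The key observation I would then record is that $\alpha_1$ and $\alpha_2$ are back-to-back across the single common edge $E$, so $\alpha_1+\alpha_2\in\Upsilon(\V_1)$; symmetrically, $\beta_1+\beta_2\in\Upsilon(\V_2)$. The hypothesis $\V_1,\V_2\in\SS_h$ and the definition \eqref{def:sing} then yield
\[|\alpha_1+\alpha_2-\pi|<\vts,\qquad |\beta_1+\beta_2-\pi|<\vts.\]
Adding the two triangle angle-sum identities $\alpha_i+\beta_i+\gamma_i=\pi$ for $i=1,2$ and invoking these two inequalities gives $\gamma_1+\gamma_2<2\vts$.

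Finally, \eqref{def:vts2} supplies the opposite inequality: every angle of every triangle in $\Th$ is at least $\vts$, so $\gamma_1+\gamma_2\ge 2\vts$. This contradicts the previous strict bound and completes the argument.

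The only subtle point, and the place where I would be most careful, is the identification of the back-to-back pair in $\Upsilon(\V_i)$: one must observe that $E$ itself serves as the common edge making $\alpha_1,\alpha_2$ (and likewise $\beta_1,\beta_2$) back-to-back at $\V_1$ (resp.\ $\V_2$), which is exactly what being an interior edge shared by $K_1,K_2$ provides. Apart from this observation, the argument is purely arithmetic with the triangle angle sum, so no case analysis on whether the endpoints are themselves interior or boundary is required, and Lemma \ref{lem:insing4} plays no role here.
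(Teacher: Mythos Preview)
Your proof is correct and is essentially the same angle-budget argument as the paper's: the paper phrases it via the quadrilateral $\V_1\V_4\V_2\V_3$ (with $\theta_1=\alpha_1+\alpha_2$, $\theta_2=\beta_1+\beta_2$, $\theta_3=\gamma_1$, $\theta_4=\gamma_2$) and derives $2\pi<\sum\theta_i=2\pi$, which is just your inequality $\gamma_1+\gamma_2<2\vts\le\gamma_1+\gamma_2$ rearranged. Your identification of $\alpha_1+\alpha_2\in\Upsilon(\V_1)$ via the shared edge $E$ is exactly the point the paper uses implicitly.
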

\begin{proof}
  Let $E$ be an interior edge whose two endpoints  $\V_1, \V_2$
  are quasi singular in $\SS_h$.
  Then, there exist two triangles sharing $E, \V_1, \V_2$
  as in Figure \ref{fig:sep2ndcls}.

  Consider the quadrilateral $Q$ whose vertices are $\V_1, \V_4, \V_2, \V_3$ and
  one of its diagonals is $E$.
  Denote the angle of $\V_i$ in $Q$  by $\theta_i$, $i=1,2,3,4$.
Then,  from \eqref{def:vts2} and
  the definition of $\SS_h$, we have
  \[ \pi -\theta_j < \vts ,\mbox{ if } j=1,2, \qquad
 \vts\le \theta_j,\mbox{ if } j=3,4.  \]
 It meets with the following contradiction:
   \[ 2\pi < \theta_1 + \theta_2+ \theta_3+ \theta_4 =2\pi.\]
\begin{figure}[ht]
\hspace{3cm}
\includegraphics[width=0.6\textwidth]{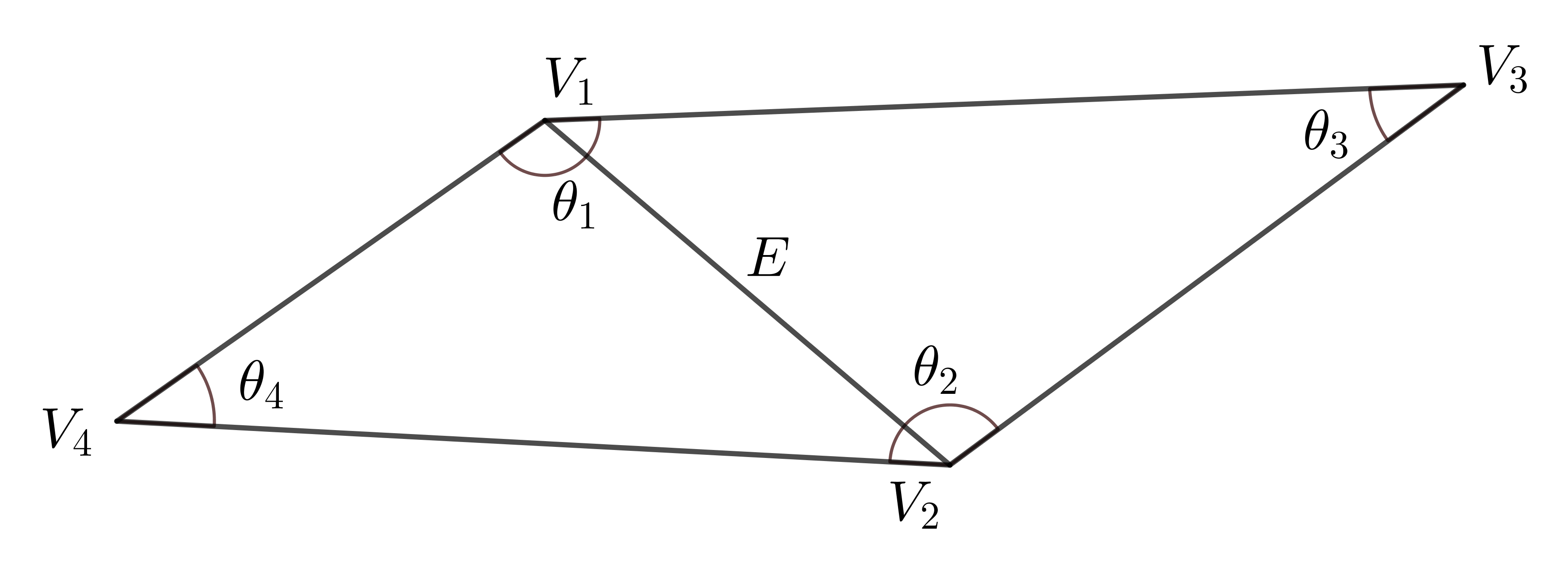}
\caption{Two quasi singular vertices $\V_1, \V_2$ form a quadrilateral
  with sharp angles $\theta_3, \theta_4$}\label{fig:sep2ndcls}
\end{figure}
\end{proof}

\section{Scott-Vogelius elements}\label{sec:SV}

Let's define the discrete polynomial spaces $ \mathcal{P}_{k,h}(\O)$ as
\[ \mathcal{P}_{k,h}(\O) =\{ v_h \in L^2(\O)\ : \ v_h|_K \in P^k
\mbox{ for all triangles } K\in\Th \}, \quad  k \ge 0.\] 
Then the Scott-Vogelius finite element space is the pair of $X_h^k, M_h^{k-1}$
such that
\[ X_h^k= {[\mathcal{P}_{k,h}(\O)\cap H_0^1(\O)]^2},\qquad
  M_h^{k-1}={\mathcal{P}_{k-1,h}(\O)\cap L_0^2(\O)},\quad k\ge 4,\]
where $L_0^2(\O)$ is the space of square integrable functions whose means vanish.
In this paper, we deal with only the  Scott-Vogelius finite element space
of the lowest order:
\[ X_h= {[\mathcal{P}_{4,h}(\O)\cap H_0^1(\O)]^2},\qquad
  M_h={\mathcal{P}_{3,h}(\O)\cap L_0^2(\O)}).\]

The incompressible Stokes problem is to 
find $(\u,p)\in [H_0^1(\O)]^2 \times L_0^2(\O)$ such that
\begin{equation}\label{prob:conti}
( \nabla \u,\nabla \v) +(p,\div \v)+(q,\div \u)
  = (\mathbf{f},\v) \quad \mbox{ for all } (\v,q) \in [H_0^1(\O)]^2\times  L_0^2(\O),
\end{equation}
 for a given source function $\mathbf{f}\in [L_0^2(\O)]^2$.
We will consider 
the discrete Stokes problem for \eqref{prob:conti} to find
$(\u_h,p_h)\in X_h \times M_h$ such that
\begin{equation}\label{prob:discr}
 ( \nabla \u_h,\nabla \v_h) + (p_h, \div \v_h)+(q_h, \div \u_h)
  = (\mathbf{f},\v_h)\quad \mbox{ for all } (\v_h,q_h) \in X_h\times M_h.
\end{equation}

\subsection{Error in velocity}
Let $M_h^S$ is the space of spurious pressures such that
\begin{equation}\label{def:Mhs}
  M_h^S =\{ s_h \in M_h\ | \ (s_h, \div\v_h)=0 \mbox{ for all } \v_h\in X_h\}.
\end{equation}
Unfortunately, $M_h^S$ is not null, if $\Th$ has an exact singular vertex
as will be discussed in subsection \ref{sec:example_spu} below.
The discrete problem \eqref{prob:discr}, however, has at least one solution,
 even if $M_h^S\neq \{0\}$. 
\begin{lemma}
  There exists $(\u_h,p_h)\in X_h \times M_h$ satisfying \eqref{prob:discr}.
  In addition, $\u_h$ is unique.
\end{lemma}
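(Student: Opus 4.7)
The plan is to bypass the missing inf-sup condition by working on the divergence-free subspace of $X_h$, where the velocity problem is coercive, and then recovering a pressure modulo the spurious subspace $M_h^S$.

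First I would note the algebraic fact that $\div X_h \subset M_h$: functions in $X_h$ are piecewise $P^4$ in $[H_0^1(\O)]^2$, so their divergences lie piecewise in $P^3$, and since they vanish on $\partial\O$ the divergence has zero mean by the divergence theorem. Consequently, testing \eqref{prob:discr} against $q_h=\div\u_h\in M_h$ shows that the constraint $(q_h,\div\u_h)=0$ for all $q_h\in M_h$ is equivalent to the pointwise condition $\div\u_h=0$. Set
\[
 Z_h=\{\v_h\in X_h\ :\ \div\v_h=0\}.
\]

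For existence and uniqueness of $\u_h$, I would reduce \eqref{prob:discr} to the following elliptic problem on $Z_h$: find $\u_h\in Z_h$ with
\[
(\nabla\u_h,\nabla\v_h)=(\mathbf{f},\v_h)\qquad\mbox{for all } \v_h\in Z_h.
\]
Since $Z_h\subset[H_0^1(\O)]^2$, the bilinear form $(\nabla\cdot,\nabla\cdot)$ is coercive on $Z_h$ by the Poincar\'e inequality, so Lax--Milgram yields a unique $\u_h\in Z_h$. This $\u_h$ satisfies the first and third equations of \eqref{prob:discr} with any $p_h\in M_h$.

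To construct the pressure, I would use the decomposition of $M_h$ induced by $M_h^S$. Consider the linear functional
\[
 L(\v_h)=(\mathbf{f},\v_h)-(\nabla\u_h,\nabla\v_h)\qquad\mbox{on } X_h,
\]
which vanishes on $Z_h=\ker(\div|_{X_h})$ by the velocity equation. Hence $L$ factors through the range $R_h:=\div X_h\subset M_h$ into a well-defined linear functional $\widetilde L$ on $R_h$. By definition \eqref{def:Mhs}, $M_h^S$ is exactly the $L^2$-orthogonal complement of $R_h$ inside $M_h$, so $M_h=R_h\oplus M_h^S$. Applying Riesz representation on the finite-dimensional space $R_h$, I obtain $p_h\in R_h\subset M_h$ with $(p_h,q_h)=\widetilde L(q_h)$ for every $q_h\in R_h$, hence $(p_h,\div\v_h)=L(\v_h)$ for every $\v_h\in X_h$. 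The pair $(\u_h,p_h)$ then solves \eqref{prob:discr}.

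The main point to verify carefully is the inclusion $\div X_h\subset M_h$, because it both forces $\u_h$ to be exactly divergence-free and identifies $M_h^S$ as the orthogonal complement of the range of $\div$; once this is in hand, the rest is standard Lax--Milgram plus a finite-dimensional duality argument. Uniqueness of $\u_h$ follows from the coercive problem on $Z_h$, while non-uniqueness of $p_h$ is precisely up to an element of $M_h^S$, which is consistent with the statement that only $\u_h$ is claimed to be unique.
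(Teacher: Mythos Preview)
Your argument is correct and amounts to the same strategy as the paper: both proofs pass to a complement of $M_h^S$ in $M_h$, where the saddle-point problem becomes well-posed, and both rely on coercivity of $(\nabla\cdot,\nabla\cdot)$ on the divergence-free subspace for uniqueness of $\u_h$. The paper simply picks an arbitrary algebraic complement $\hat{M_h}$ and invokes nonsingularity of the reduced system, whereas you make the natural choice $\hat{M_h}=R_h=\div X_h$ (the $L^2$-orthogonal complement of $M_h^S$) and build the solution in two steps; your extra observation $\div X_h\subset M_h$ is what lets you identify $Z_h$ with the exactly divergence-free functions, a fact the paper uses later anyway.
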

\begin{proof}
  Let $M_h = M_h^S \bigoplus \hat{M_h}$ for some
subspace $\hat{M_h}$. Then there exists a
  unique $(\u_h,\hat{p_h})\in X_h \times \hat{M_h}$ 
satisfying \eqref{prob:discr}, since the discrete problem
  is not singular on $X_h \times \hat{M_h}$.
\end{proof}

Let $\Upsilon'(\V)=\Upsilon(\V)\cup\{0\}$
and define a parameter $\Theta_{\min}$ of the triangulation $\Th$ as
\[ \Theta_{\min}  = \min_{\V\in\VV_h} 
  \max_{\Theta\in\Upsilon'(\V)}|\sin\Theta|. 
\]
The following inf-sup condition is well known \cite{Guzman2018}:
\begin{equation}\label{cond:infsup}
 \Theta_{\min} \beta  \| q_h \|_0 \le
  \sup_{\v_h\in X_h\setminus\{0\}}\frac{(q_h,\div\v_h)}{|\v_h|_1},
    \quad \forall q_h\in M_h.
  \end{equation}
If $\Th$ has a quasi singular vertex in $\SS_h$, 
$\Theta_{\min}$ is zero or might be quite small. 
It could spoil the discrete pressure $p_h$ as in Figure \ref{fig:pandph}.
Although the inf-sup condition in \eqref{cond:infsup} depends on $\Theta_{\min}$, the error in velocity
is stable independently of  $\Theta_{\min}$.

Throughout inequalities in the paper,
a generic notation $C$ denotes a constant which depends only on $\O$ and
the shape regularity parameter $\sigma$.

\begin{theorem}\label{thm:veloerror}
    Let
    $(\u,p)\in [H_0^1(\O)]^2\times L_0^2(\O)$
    and
    $(\u_h, p_h)\in  X_h\times M_h$ satisfy \eqref{prob:conti}, \eqref{prob:discr}, respectively. Then, if $\u\in[H^5(\O)]^2$, we have
    \[ |\u -\u_h|_1 \le C h^4 |\u|_5.\]
   \end{theorem}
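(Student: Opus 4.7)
\smallskip
\noindent\textbf{Proof proposal.} My plan is to sidestep the degenerate inf-sup \eqref{cond:infsup} by restricting the discrete equation to the kernel of the discrete divergence and then exploiting the Scott--Vogelius property $\div X_h\subseteq M_h$.

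First I would note that for every $\v_h\in X_h$, the scalar $\div \v_h$ lies in $\mathcal{P}_{3,h}(\O)$ and has zero mean (because $\v_h\in [H_0^1(\O)]^2$), so $\div \v_h\in M_h$. Consequently the discretely divergence-free subspace
\[ Z_h := \{\v_h\in X_h\ :\ (q_h,\div \v_h)=0 \mbox{ for all } q_h\in M_h\} \]
coincides with the pointwise divergence-free subspace $\{\v_h\in X_h : \div \v_h\equiv 0\}$, since choosing $q_h = \div \v_h$ forces $\norm{\div \v_h}{0}=0$. In particular $\u_h\in Z_h$, and restricting \eqref{prob:discr} to $\v_h\in Z_h$ yields $(\nabla \u_h,\nabla \v_h)=(\mathbf{f},\v_h)$. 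The continuous $\u$ obeys the same identity on $Z_h$, because $(p,\div \v_h)=0$. Subtracting gives the Galerkin orthogonality $(\nabla(\u-\u_h),\nabla \v_h)=0$ on $Z_h$, so by C\'ea's lemma
\[ |\u-\u_h|_1 \le \inf_{\w_h\in Z_h} |\u-\w_h|_1. \]

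It then remains to exhibit $\w_h\in Z_h$ with $|\u-\w_h|_1\le Ch^4|\u|_5$. Using the stream-function representation $\u=\curl\psi$ on the bounded polygonal $\O$, I obtain $\nabla\psi=0$ on $\p\O$ and $|\psi|_6\le C|\u|_5$. I would then apply the canonical interpolant $\Pi^A$ of the $C^1$ Argyris $P_5$ element, which is defined on every conforming $\Th$ independently of singular vertices and satisfies $|\psi-\Pi^A\psi|_2 \le Ch^4|\psi|_6$. A routine degree-of-freedom check shows that the boundary conditions $\nabla\psi=0$ on $\p\O$ are inherited by $\Pi^A\psi$ on every boundary edge (tangential DoFs force $\Pi^A\psi$ to be locally constant on $\p\O$, and the normal-derivative DoFs vanish), so that $\w_h:=\curl(\Pi^A\psi)$ has zero trace; being $[P_4]^2$, $C^0$, and pointwise divergence-free, $\w_h$ belongs to $Z_h$, and
\[ |\u-\w_h|_1 = |\curl(\psi-\Pi^A\psi)|_1 \le C|\psi-\Pi^A\psi|_2 \le Ch^4|\u|_5. \]

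The main obstacle is precisely the construction of this $\w_h$: any Fortin-type correction based on inverting the discrete divergence through \eqref{cond:infsup} would introduce a factor $\Theta_{\min}^{-1}$ and collapse at nearly singular vertices, which is the very pathology the paper is about. The stream-function detour replaces the divergence-inversion problem with an $H^2$-conforming interpolation into the scalar Argyris space, whose approximation is insensitive to the mesh topology. This is what permits the $\Theta_{\min}$-independent velocity estimate even though the pressure is simultaneously being corrupted by spurious modes.
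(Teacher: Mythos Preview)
Your proposal is correct and follows essentially the same route as the paper: restrict to the discretely divergence-free subspace (the paper's $V_h$ is your $Z_h$), use Galerkin orthogonality/C\'ea on that subspace, and build the comparison function as the curl of an Argyris $P_5$ interpolant of the stream function. The paper states the key facts a bit more tersely (it simply asserts $\div\v_h=0$ on $V_h$ and that $\nabla\phi_h$ vanishes on $\p\O$), whereas you supply the $\div X_h\subset M_h$ argument and the degree-of-freedom check, but the underlying strategy is identical.
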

   \begin{proof}
     Since $\div\u=0$, there exists  a stream function $\phi\in H^6(\O)$ of $\u$
     which is constant on each component of $\p\O$.
     Let $\phi_h$ be the projection of $\phi$ into the space of
     $C^1$-Argyris triangle elements which are locally $P^5$
     \cite{Braess2001, Brenner2002, Ciarlet}.
     Then, $\nabla \phi_h$ is continuous in $\O$ and vanishes on $\p\O$ and
     $\phi_h$ satisfies that 
     \[ |\phi-\phi_h|_2 \le C h^4 |\phi|_6.\]
     Thus, if we define $\Pi_h \u= \curl \phi_h$, we have $\Pi_h\u\in X_h$ and
     \begin{equation}\label{eq:uPiuerror}
       |\u - \Pi_h \u|_1  \le C h^4 |\u|_5.
     \end{equation}
     
     Let
   \[ V_h =\{ \v_h\in X_h\ | \ (q_h,\div \v_h) =0 \mbox{ for all } q_h\in M_h\}.\]
Note $\div\v_h=0$, if $\v_h\in V_h$.   Then, from \eqref{prob:conti}, \eqref{prob:discr}, $\u$ and $\u_h$
   satisfy that
   \begin{equation}\label{eq:nabluuhvh}
     (\nabla \u -\nabla\Pi_h\u, \nabla\v_h)=(\nabla\u_h-\nabla\Pi_h\u,\nabla\v_h) \mbox{ for all } \v_h\in V_h.
   \end{equation}
   Since $\u_h, \Pi_h\u_h\in V_h$,
   we have, for $\v_h=\u_h- \Pi_h\u\in V_h$ in \eqref{eq:nabluuhvh}, 
 \[ \snorm{\u_h-\Pi_h\u}{1}^2 \le   \snorm{\u-\Pi_h\u}{1} \snorm{\u_h-\Pi_h\u}{1}.\]
 It completes the proof with \eqref{eq:uPiuerror}.  
  \end{proof}   

\section{Spurious pressure}\label{sec:spu}
\subsection{Sting functions}
Let $K$ be a triangle in $\Th$ which has an edge $E$ and its opposite vertex $\V$
as in  Figure \ref{fig:svze}-(b).
Denote by $\lam(\x)$, a barycentric coordinate of $\x$ vanishing on $E$ such that
\[ \lam(\x)=(-\n)\cdot(\x-\M),\]
where  $\n$ is the unit outward normal vector of $K$ on $E$ and
$\M$ is the center of $E$.

With a specific function $\f$:
\begin{equation}\label{def:spucoef}
 \f(t)= \frac1{10}(56 t^3 -63t^2 +18t-1),
\end{equation}
define a cubic polynomial $\st_{E\V} \in P^3(K)$ determined by the edge $E$
and its opposite vertex $\V$:
\begin{equation}\label{def:zEV}
  \st_{E\V} (\x) = \f\Big(\frac{\lam(\x)}{H}\Big),
\end{equation}
where $H$ is the distance between $E$ and $\V$.
A graph of $\st_{E\V}$ is depicted in Figure \ref{fig:spu_ref}
in the reference triangle $\widehat{K}$.
We would name $\st_{E\V}$ a sting function after its look.

In the remaining of the paper,
a local function such as $\st_{E\V}$ defined on $K$
is identified with its trivial extension on $\O$ vanishing outside $K$.
We also use a notation $\Cs$ for a generic constant which depends only on
the shape regularity parameter $\sigma$.

\begin{figure}[t]
\hspace{3cm}
\includegraphics[width=0.6\textwidth]{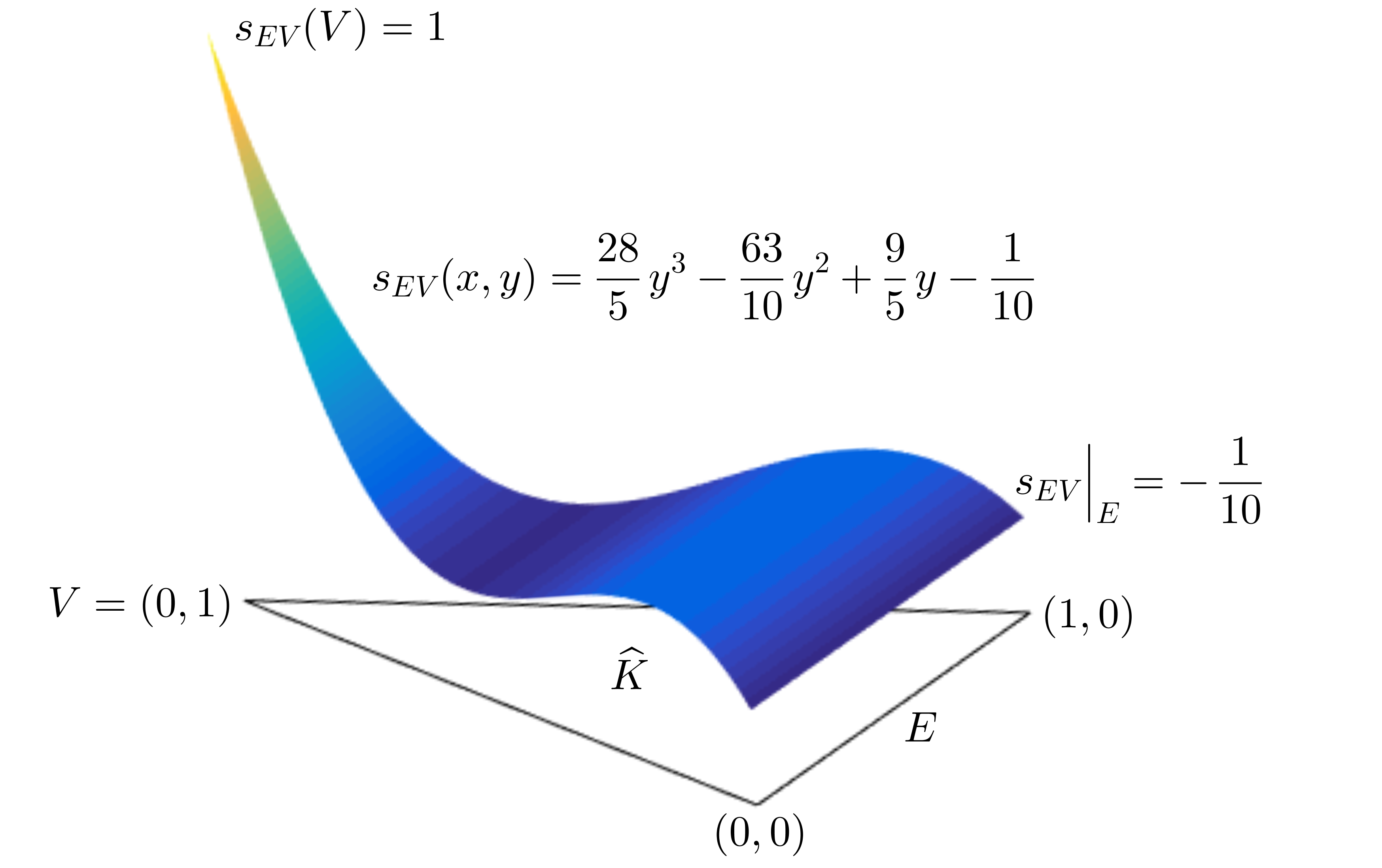}
\caption{ A sting function $\st_{E\V}\in P^3$ over $\widehat{K}$
}\label{fig:spu_ref}
\end{figure}

\subsection{ Quadrature rules}
The choice of $\f$ in \eqref{def:spucoef} makes
the sting function $\st_{E\V}$ satisfy the following two quadrature rules 
which play key roles in our error analysis for pressure. 
\begin{lemma}\label{lem:quadrule}
  Let $E$ be an edge of a triangle $K$ and $\V$ its opposite vertex.
  Then, for each polynomial $q\in P^3(K)$, we have
  \begin{equation}\label{eq:zEVqK}
    \int_K \st_{E\V}\ q \ dA = \frac{|K|}{100} q(\V).
  \end{equation}
\end{lemma}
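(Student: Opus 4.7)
The plan is to foliate $K$ by the line segments joining points of $E$ to the opposite vertex $\V$ and reduce the identity to a one-dimensional moment property of the cubic $\f$. Let $\V_1,\V_2$ denote the endpoints of $E$, set $\W_s = (1-s)\V_1 + s\V_2$, and parameterize $K$ by $\x(s,t) = (1-t)\W_s + t\V$ for $(s,t) \in [0,1]^2$. Under this parameterization $\lam(\x(s,t)) = tH$, so $\st_{E\V}(\x(s,t)) = \f(t)$, and a direct Jacobian computation (the factor $1-t$ comes from the shrinking cross-section parallel to $E$) gives $dA = 2|K|(1-t)\,ds\,dt$.

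For each fixed $s \in [0,1]$ the function $t\mapsto q(\x(s,t))$ is the restriction of the cubic $q$ to the segment joining $\W_s\in E$ to $\V$, hence a polynomial of degree at most $3$ in $t$. Writing $q(\x(s,t)) = \sum_{k=0}^3 a_k(s) t^k$ and evaluating at $t=1$ yields the crucial identity
\[ q(\V) = \sum_{k=0}^3 a_k(s)\qquad\text{for every } s\in[0,1]. \]
By linearity in $q$, the theorem will follow once we establish the scalar identity
\[ \int_0^1 \f(t)(1-t)t^k\,dt = \frac{1}{200},\qquad k=0,1,2,3, \]
because then
\[ \int_K \st_{E\V}\,q\,dA = 2|K|\int_0^1\!\!\int_0^1 \f(t)(1-t) q(\x(s,t))\,dt\,ds = 2|K|\int_0^1 \frac{q(\V)}{200}\,ds = \frac{|K|}{100}q(\V). \]

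The scalar identity is a four-integral verification. A more conceptual phrasing: the linear functional $L(p) = \int_0^1 \f(t)(1-t)p(t)\,dt$ on $P^3([0,1])$ is required to agree with $p\mapsto p(1)/200$; since the subspace $\{p\in P^3 : p(1)=0\}$ has dimension three, this is characterized by the three vanishing conditions $L(1-t^k)=0$ for $k=1,2,3$ together with the normalization $L(1)=1/200$, and the coefficients $56,-63,18,-1$ in \eqref{def:spucoef} are the unique choice making this so. The only genuine obstacle in the proof is this four-line computation; everything else is affine invariance and linearity of the integral.
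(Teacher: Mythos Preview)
Your proof is correct, and it takes a genuinely different route from the paper's.

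The paper pulls back to the reference triangle $\widehat{K}$ with $\V=(0,1)$ and foliates by lines \emph{parallel to $E$} (i.e.\ fixed $y$), then tests against the monomial basis $q=(1-y)^m x^n$. After integrating out $x$, the calculation reduces to the 1D identity
\[
\int_0^1(56s^3-105s^2+60s-10)\,s^{k}\,ds=\begin{cases}-1/20,& k=1,\\ 0,& k=2,3,4,\end{cases}
\]
applied with $k=m+n+1$; the basis is chosen so that $q(\V)$ vanishes unless $m=n=0$.

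You instead foliate \emph{radially} from $\V$ (segments joining $\W_s\in E$ to $\V$), which makes the quadrature node $\V$ appear geometrically: every radial line hits $\V$ at $t=1$, so the restriction of any cubic $q$ to such a line is a cubic in $t$ with value $q(\V)$ at $t=1$, uniformly in $s$. This converts the 2D statement directly into the single 1D functional identity $\int_0^1 \f(t)(1-t)\,p(t)\,dt = p(1)/200$ on $P^3$, with no need to choose a basis of $P^3(K)$ or pass explicitly to a reference element. Your Jacobian computation $dA=2|K|(1-t)\,ds\,dt$ and the four moments $\int_0^1 \f(t)(1-t)t^k\,dt=1/200$ for $k=0,1,2,3$ are all correct (and the latter are equivalent to the paper's identity via the substitution $s=1-t$).

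What each buys: the paper's parallel foliation keeps $\st_{E\V}$ constant along slices, which makes the Fubini step immediate but forces a basis-by-basis verification; your radial foliation keeps the \emph{target value} $q(\V)$ constant along slices, which is why the result drops out of a single 1D quadrature rule and explains more transparently why the functional is point evaluation at $\V$.
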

\begin{proof}
  In the  reference triangle $\widehat{K}$  with its vertices $(0,0), (1,0), (0,1)$, let
   $E=\{(x,0)\hspace{1pt}:\hspace{1pt}0\le x \le 1\}$ with  its opposite vertex $\V=(0,1)$.
  By an affine map $\widehat{K} \rightarrow K$,
  sting functions on $K$
  are pulled back to $\st_{E\V}$ on $\widehat{K}$. 
 Thus,  it is sufficient to prove \eqref{eq:zEVqK} for
  $\st_{E\V}$ and a cubic polynomial $q$ in $\widehat{K}$.
  
  By definition in \eqref{def:zEV}, we have
  \begin{equation}\label{eq:zEVKhat}
    \st_{E\V}(x,y) = \frac1{10}(56y^3-63y^2+18y-1).
  \end{equation}
 The graph of $\st_{E\V}$ is depicted in Figure \ref{fig:spu_ref}. 

  By simple calculation, we have
  \begin{equation}\label{eq:56t3_1}
  \int_0^1 (56s^3-105s^2+60s-10)s^{k}\ ds =
    \left\{\arraycolsep=1.4pt\def\arraystretch{1.6} \begin{array}{cl}
             
              \disp \frac{-1}{20}, & \mbox{ if } k=1,\\
                                  0, & \mbox{ if } k=2,3,4.
            \end{array}\right.
        \end{equation}
        We also note
        \begin{equation}\label{eq:56t3_2}
      56t^3-63t^2+18t-1 = -56(1-t)^3+105(1-t)^2 -60(1-t)+10.
    \end{equation}
  
  Let $q=(1-y)^mx^n$ be a polynomial for
  nonnegative integers $m, n$ such that $m+n \le 3$.
  From \eqref{eq:zEVKhat}-\eqref{eq:56t3_2}, we can expand that
  \begin{equation*}\arraycolsep=1.4pt\def\arraystretch{2.4}
     \begin{array}{l}
   \disp \int_{\widehat{K}} \st_{E\V}(x,y)q(x,y)\ dA
      =\frac1{10}\int_0^1 (56y^3-63y^2+18y-1)(1-y)^m \int_0^{1-y} x^n\ dxdy \\
      \qquad
      =\disp\frac{-1}{10(n+1)}\int_0^1
      \big(56(1-y)^3-105(1-y)^2+60(1-y)-10\big)(1-y)^{m+n+1}\ dy \\
    
       \qquad
       =\disp\frac{-1}{10(n+1)}
       \int_0^1\big(56s^3-105s^2+60s-10\big) s^{m+n+1}\ ds
       =\disp\frac1{200} q(0,1)
       = \disp\frac{|\widehat{K}|}{100} q(\V).
    \end{array}
  \end{equation*}
\end{proof}
\begin{figure}[t]
\hspace{4.5cm}
\includegraphics[width=0.40\textwidth]{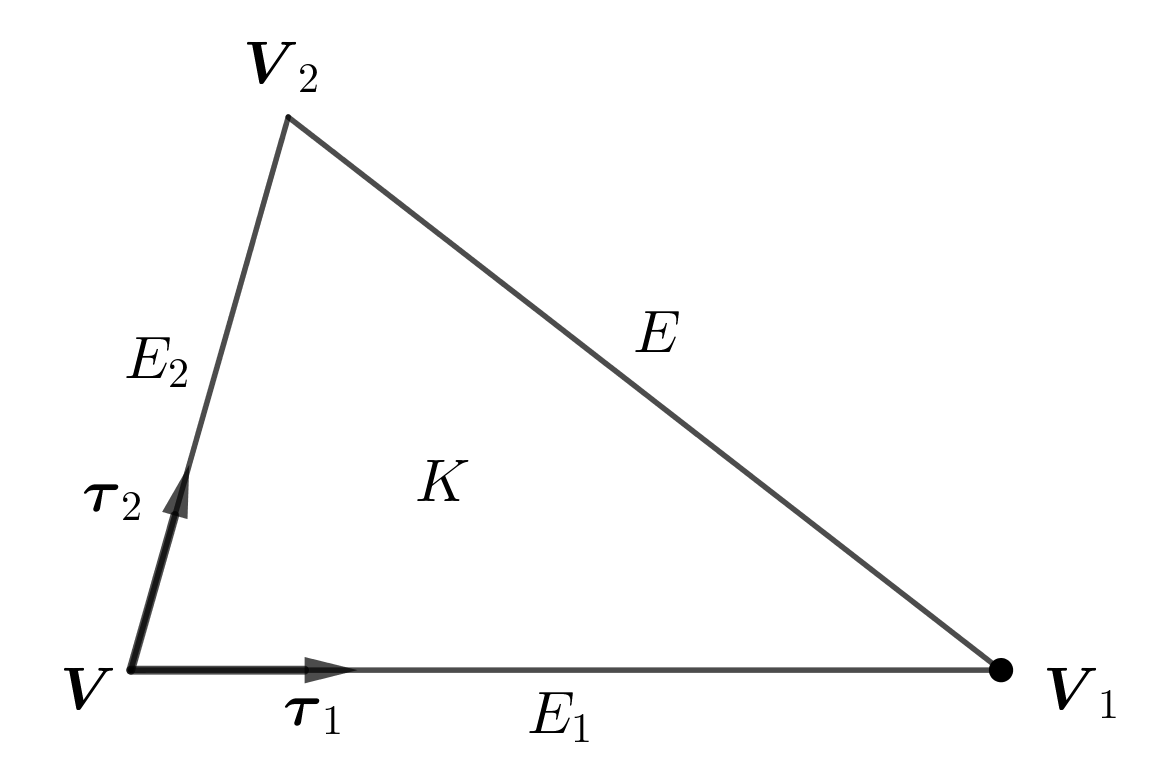}
\caption{Counterclockwisely numbered unit vectors $\t_1, \t_2$
  directed to other vertices from $\V$}
\label{fig:naming2}
\end{figure}
\begin{lemma}\label{lem:quadrature2}
  Let $E$ be an edge of a triangle $K$ and $\V$ its opposite vertex.
  Denote by $\t_1, \t_2$,   the counterclockwisely numbered unit vectors
  directed to other vertices $\V_1, \V_2$ from $\V$ 
  as in Figure \ref{fig:naming2}.
  Then for all $\v_h\in X_h$, we have
 \begin{equation*}\label{eq:zEVdivvh}
  (\st_{E\V}, \div \v_h)_K =\frac{|E_1| |E_2|}{200}
  \Big(\frac{\partial \v_h}{\partial \t_2}(\V) \cdot \t_1^{\perp}
  -  \frac{\partial \v_h}{\partial \t_1} (\V)\cdot \t_2^{\perp} \Big),
\end{equation*}
where $E_1, E_2$ are the edges sharing $\V$ 
and $\t_i^{\perp}$ is the 90-degree counterclockwise rotation of $\t_i, i=1,2$.

\end{lemma}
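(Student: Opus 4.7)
The plan is to reduce the left-hand side to a point evaluation via Lemma \ref{lem:quadrule} and then translate that point evaluation into the stated combination of directional derivatives. Since $\v_h|_K \in [P^4(K)]^2$ gives $\div \v_h|_K \in P^3(K)$, Lemma \ref{lem:quadrule} applied with $q = \div \v_h$ yields
\[
(\st_{E\V}, \div \v_h)_K = \frac{|K|}{100}(\div \v_h)(\V).
\]

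Next I would establish the purely algebraic identity at $\V$, valid for any smooth vector field $\v_h$:
\[
\frac{\partial \v_h}{\partial \t_2}(\V)\cdot \t_1^{\perp} - \frac{\partial \v_h}{\partial \t_1}(\V)\cdot \t_2^{\perp} = \det(\t_1,\t_2)\,(\div \v_h)(\V).
\]
Expanding $\partial\v_h/\partial\t_i = (\t_i\cdot\nabla)\v_h$ in Cartesian components, the mixed-partial contributions (those involving $\partial v_1/\partial y$ and $\partial v_2/\partial x$) cancel between the two inner products, while the diagonal partials ($\partial v_1/\partial x$ and $\partial v_2/\partial y$) each acquire the common factor $\det(\t_1,\t_2)$, producing exactly $\div\v_h$ at $\V$.

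To conclude, the counterclockwise orientation of $\t_1, \t_2$ gives $\det(\t_1,\t_2) = \sin\theta$, where $\theta \in (0,\pi)$ is the angle of $K$ at $\V$, and the area formula $|K| = \tfrac{1}{2}|E_1||E_2|\sin\theta$ converts $\frac{|K|}{100}$ into $\frac{|E_1||E_2|\sin\theta}{200}$. Solving the algebraic identity for $(\div\v_h)(\V)$ and substituting into the quadrature identity above cancels the stray $\sin\theta$ and leaves the claimed right-hand side. The only step demanding genuine verification is the divergence identity, but the cancellation is transparent once the inner products are written out component by component; I expect no substantive obstacle.
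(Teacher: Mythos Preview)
Your proposal is correct and follows essentially the same route as the paper: apply Lemma~\ref{lem:quadrule} to reduce to $\frac{|K|}{100}\,\div\v_h(\V)$, prove the identity $\div\v_h(\V)=\frac{1}{\sin\theta}\big(\t_1^{\perp}\cdot\frac{\partial\v_h}{\partial\t_2}(\V)-\t_2^{\perp}\cdot\frac{\partial\v_h}{\partial\t_1}(\V)\big)$, and finish with $|K|=\tfrac12|E_1||E_2|\sin\theta$. The only cosmetic difference is that the paper derives the divergence identity via a trace/matrix-inverse argument on $(\t_1\ \t_2)$, whereas you obtain it by a direct componentwise cancellation; both are the same computation.
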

\begin{proof}
  For  $\v_h=(v_1, v_2)$, we write $\disp\frac{\partial \v_h}{\partial \t_1},
  \frac{\partial \v_h}{\partial \t_2} $ at $\V$ in the matrix form:  
  \begin{equation*}
    \left(\arraycolsep=1.4pt\def\arraystretch{1.5}
      \begin{array}{c} \nabla v_1(\V)^{t} \\ \nabla v_2(\V)^{t}
           \end{array}\right)
         (\t_1\ \t_2) = \Big(\frac{\partial \v_h}{\partial \t_1}(\V)\ \frac{\partial \v_h}{\partial \t_2}(\V)\Big),
       \end{equation*}
       where all vectors are presented in column forms.
       Then we expand that
       \begin{equation}\label{eq:spandivvh}
         \arraycolsep=1.4pt\def\arraystretch{1.5}
         \begin{array}{ll}
         \div \v_h (\V) &=
         \mbox{trace} \left( \begin{array}{c} \nabla v_1(\V)^{t} \\ \nabla v_2(\V)^{t}
                             \end{array}\right)=
                           \mbox{trace}\Big( (\t_1\ \t_2)^{-1}  \Big(\disp\frac{\partial \v_h}{\partial \t_1}(\V)\ \frac{\partial \v_h}{\partial \t_2}(\V)\Big) \Big) \\
                        &=\disp \frac1{\sin\theta}
                            \mbox{trace}\left( \left( \begin{array}{c} -(\t_2^{\perp})^t \\ (\t_1^{\perp})^{t}
                             \end{array}\right)
         \Big(\frac{\partial \v_h}{\partial \t_1}(\V)\ \frac{\partial \v_h}{\partial \t_2}(\V)\Big) \right)\\
         &=\disp\frac1{\sin\theta}  \Big(\t_1^{\perp}\cdot\frac{\partial \v_h}{\partial \t_2}(\V)
  - \t_2^{\perp}\cdot \frac{\partial \v_h}{\partial \t_1} (\V) \Big),             
      \end{array}
    \end{equation}
    where $\theta$ is the angle between $\t_1$ and $\t_2$.
    Since $|K|=\frac12|E_1| |E_2|\sin\theta$, we obtain \eqref{eq:zEVdivvh} with the aid of \eqref{eq:spandivvh} and
    Lemma \ref{lem:quadrule}.  
\end{proof}

\subsection{Spurious pressure}\label{sec:example_spu}
If $\Th$ has an exact singular vertex,
a spurious pressure in $M_h^S$ defined in \eqref{def:Mhs} appears.
For a simple example, let $\V$ be a boundary singular vertex which meets only one triangle $K$ in $\Th$
and has its opposite edge $E$
as $\V_4$ in Figure \ref{fig:extsingvtx}. Then, by Lemma \ref{lem:quadrule},
we obtain
\[ (\st_{E\V}, \div\v_h)_K = \frac{|K|}{100} \div\v_h (\V) =0\quad \mbox{ for all }
\v_h\in X_h,\]
since $\nabla\v_h$ vanishes at $\V$. Thus,
$\st_{E\V}-c$ is a spurious pressure in $M_h^S$
for a constant function $c$ on $\O$ such that
 $\st_{E\V}-c\in L_0^2(\O)$.

For an another example, let $\V$ be an interior singular vertex
which meets with 4 triangles $K_1, K_2, K_3, K_4$
counterclockwisely numbered as in figure \ref{fig:int_spu}.
The vertex $\V$ has 4 opposite edges $E_i\subset K_i, i=1,2,3,4$.
Denote by $\t_1, \t_2$, the counterclockwisely numbered
unit vectors at $\V$ directed other vertices in $K_1$ and
by $\ell_1, \ell_2, \ell_3, \ell_4$, the lengths of edges
corresponding to $\t_1, \t_2, -\t_1, -\t_2$, respectively.

Now, we calculate the followings by Lemma \ref{lem:quadrature2}:
\begin{equation}\label{expan:sE1234v}
  \arraycolsep=1.4pt\def\arraystretch{1.8}
  \begin{array}{lcl}
    (\st_{E_1\V}, \div\v_h)_{K_1} &=& \disp\frac{\ell_1\ell_2}{200}
  \Big(\frac{\partial \v_h}{\partial \t_2}(\V)\t_1^{\perp} -
                                      \frac{\partial \v_h}{\partial \t_1}(\V)\t_2^{\perp} \Big), \\
   (\st_{E_2\V}, \div\v_h)_{K_2} &=& \disp\frac{\ell_2\ell_3}{200}
  \Big(\frac{\partial \v_h}{\partial (-\t_1)}(\V)\t_2^{\perp} -
                                     \frac{\partial \v_h}{\partial \t_2}(\V)(-\t_1)^{\perp} \Big), \\
     (\st_{E_3\V}, \div\v_h)_{K_3} &=& \disp\frac{\ell_3\ell_4}{200}
  \Big(\frac{\partial \v_h}{\partial (-\t_2)}(\V)(-\t_1)^{\perp} -
                                       \frac{\partial \v_h}{\partial (-\t_1)}(\V)(-\t_2)^{\perp} \Big), \\
     (\st_{E_4\V}, \div\v_h)_{K_4} &=& \disp\frac{\ell_4\ell_1}{200}
  \Big(\frac{\partial \v_h}{\partial \t_1}(\V)(-\t_2)^{\perp} -
                                      \frac{\partial \v_h}{\partial (-\t_2)}(\V)\t_1^{\perp} \Big).
  \end{array}
\end{equation}
Let $q_h\in M_h$ be an alternating sum of $ \st_{E_i\V}, i=1,2,3,4$ such that
\begin{equation}\label{eq:spuqh}
  q_h = \frac1{\ell_1\ell_2} \st_{E_1\V} -  \frac1{\ell_2\ell_3} \st_{E_2\V}
+ \frac1{\ell_3\ell_4} \st_{E_3\V} -  \frac1{\ell_4\ell_1} \st_{E_4\V}. 
\end{equation}   
Then, since $\v_h$ is continuous on edges,  we have from \eqref{expan:sE1234v},
\[  (q_h, \div\v_h) =0 \quad \mbox{ for all }
\v_h\in X_h. \]
\begin{figure}[ht]
\hspace{4.5cm}
\includegraphics[width=0.45\textwidth]{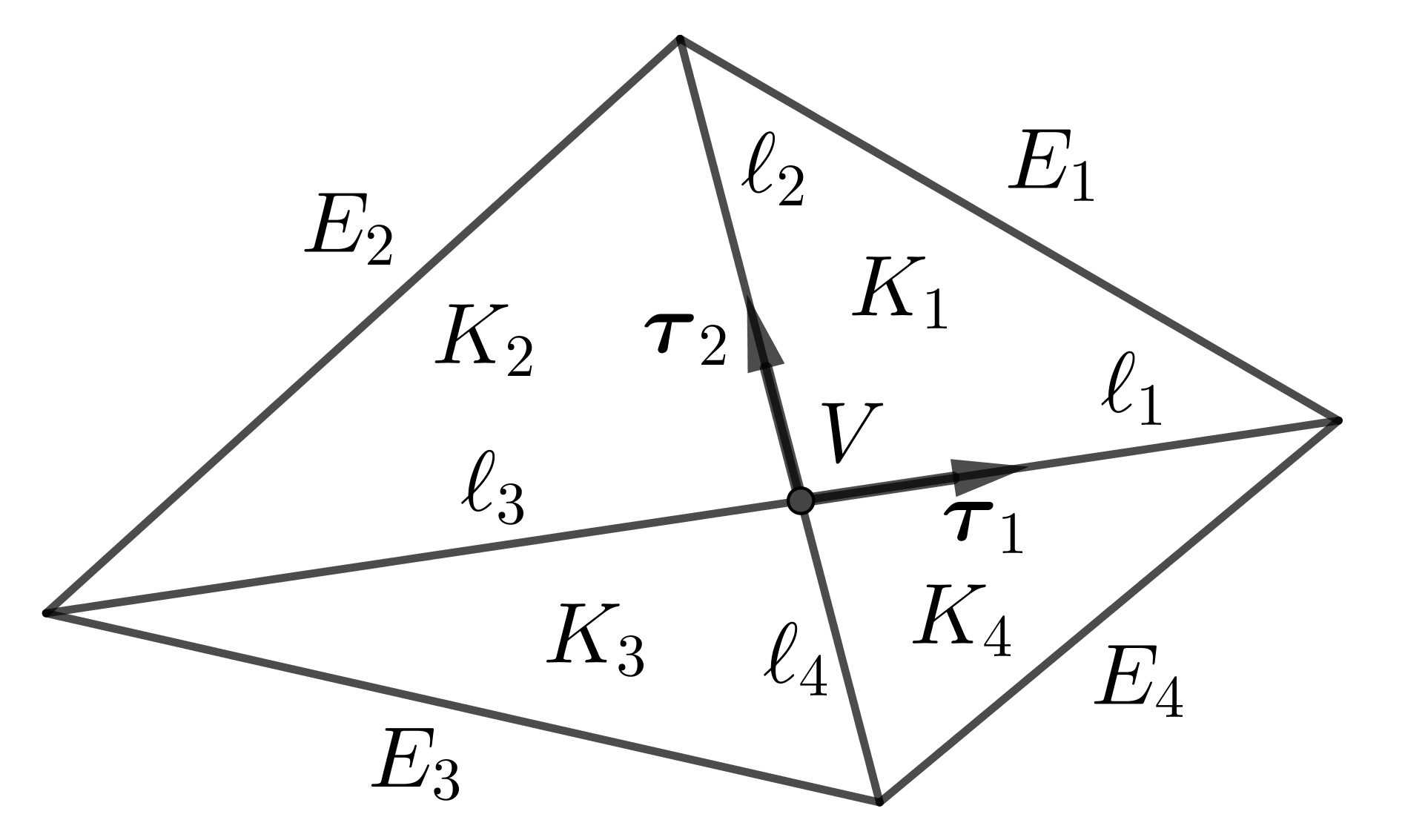}
\caption{Interior exact singular vertex $\V$ causing a spurious pressure}
\label{fig:int_spu}
\end{figure}

\section{A basis of $P^3$ over $K$}\label{sec:basis}
We will suggest a new basis of $P^3$ over a triangle $K$
which includes sting functions $\st_{E\V}$.
\subsection{16-point Lyness quadrature rule}
The following 16-point Lyness quadrature rule \cite{Lyness1975}
is exact over a triangle $K$ for any polynomial $p$ of degree up to 6:
\begin{equation}\label{eq:Lyn16}
\int_{{K}} p(x,y)\ dxdy = |K| \sum_{i=1}^{16} p(\x_i) w_i.
\end{equation}

The 16 quadrature points in \eqref{eq:Lyn16}
include the gravity center  $\G$ of $K$
and the center $\G_i$ of the segment
connecting the vertex $\V_i$ and the midpoint $\M_i$ of the opposite edge of $\V_i$,
$i=1,2,3$  as in Figure  \ref{fig:Lyn16}.
The other 12 points lie on the boundary of $K$.

\begin{figure}[ht]
\hspace{3.7cm}
\includegraphics[width=0.50\textwidth]{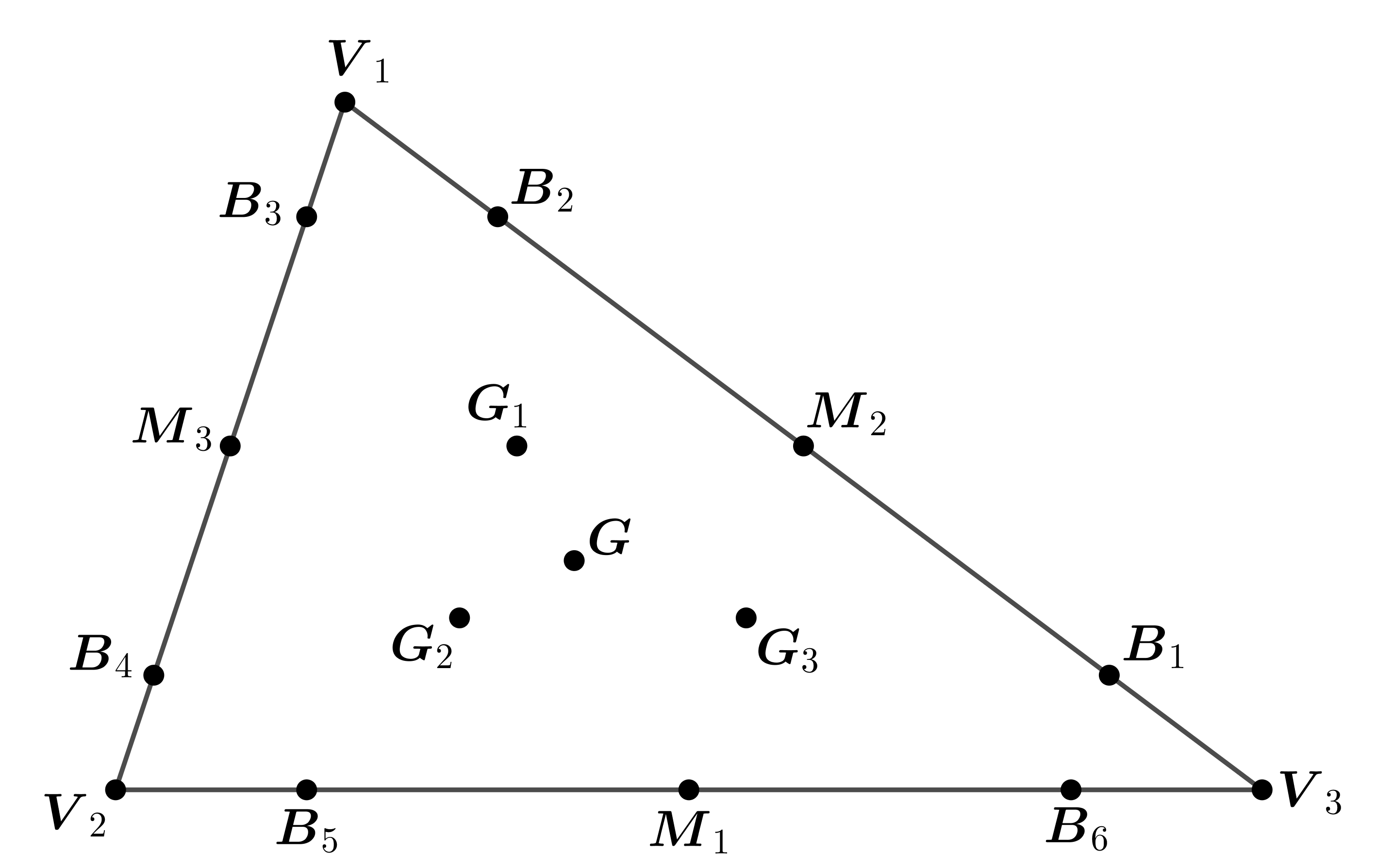}
\caption{ 16 Lyness quadrature points,
  $\G_i$ is the center of $\overline{\V_i\M_i}
   \ i=1,2,3$}\label{fig:Lyn16}
\end{figure}
In the reference triangle $\hat{K}$ with vertices $(0,0),(1,0),(0,1)$,
the 16 quadrature points and their corresponding weights are listed:
\begin{equation}\label{eq:Lynlist}
  \arraycolsep=1.4pt\def\arraystretch{1.6}
\begin{array}{ccl}
  \{\x_i\}_{1}^3&=&\{ (0,0), (1,0), (0,1)\}, \quad \{ w_i\}_1^3 = \{-5/252\},\\
 \{ \x_i\}_4^9  &=&\{(0,a), (0,b), (a,0), (b,0), (a,b), (b,a)\}, \quad \{ w_i\}_4^9 = \{3/70\}, \\
 \{ \x_i\}_{10}^{12}&=& \{ (0,1/2), (1/2,0), (1/2, 1/2)\},\quad  \{ w_i\}_{10}^{12} =  \{17/315\}, \\
  \{ \x_i\}_{13}^{15}&=&\{ (1/4, 1/4), (1/4, 1/2), (1/2, 1/4)\},\quad \{ w_i\}_{13}^{15} =\{128/315\}, \\
  \x_{16}&=& (1/3, 1/3),\quad w_{16} =-81/140,
\end{array}
\end{equation}
where $a=(3-\sqrt{6})/6,\ b=1-a$.

\subsection{Basis functions with interior Lyness points}
Let $\V$ be a vertex of a triangle $K$ and $\G$  the gravity center of $K$.
Denote by $\ii_{\V}$,  the unit vector from $\V$ to $\G$ as in Figure \ref{fig:svze}-(a),
that is 
\[ \ii_{\V} = \overrightarrow{\V\G} \Big/  |\overrightarrow{\V\G}|,   \]
and by ${\ii_{\V}}^{\perp}$, the  90-degree counterclockwise rotation of $\ii_{\V}$,
and by $\mu$,
a linear function which vanishes at the line passing $\V,\G$ such that
\begin{equation}\label{def:mu}  \mu(\x) = {\ii_{\V}}^{\perp}\cdot(\x-\G),
\end{equation}
lastly, by $d$,  the common distance from two other vertices of $K$
to the line $\mu(\x)=0$ as in Figure \ref{fig:svze}-(a).

\begin{figure}[ht]
\hspace{10mm}
\subfloat[$\mu(\x)={\ii_{\V}}^{\perp}\cdot(\x-\G),\
\ib_{\V}^{\pm}=\mu^{\pm}\Big(\disp\frac{\mu}d\Big)$]{
\includegraphics[width=0.38\textwidth]{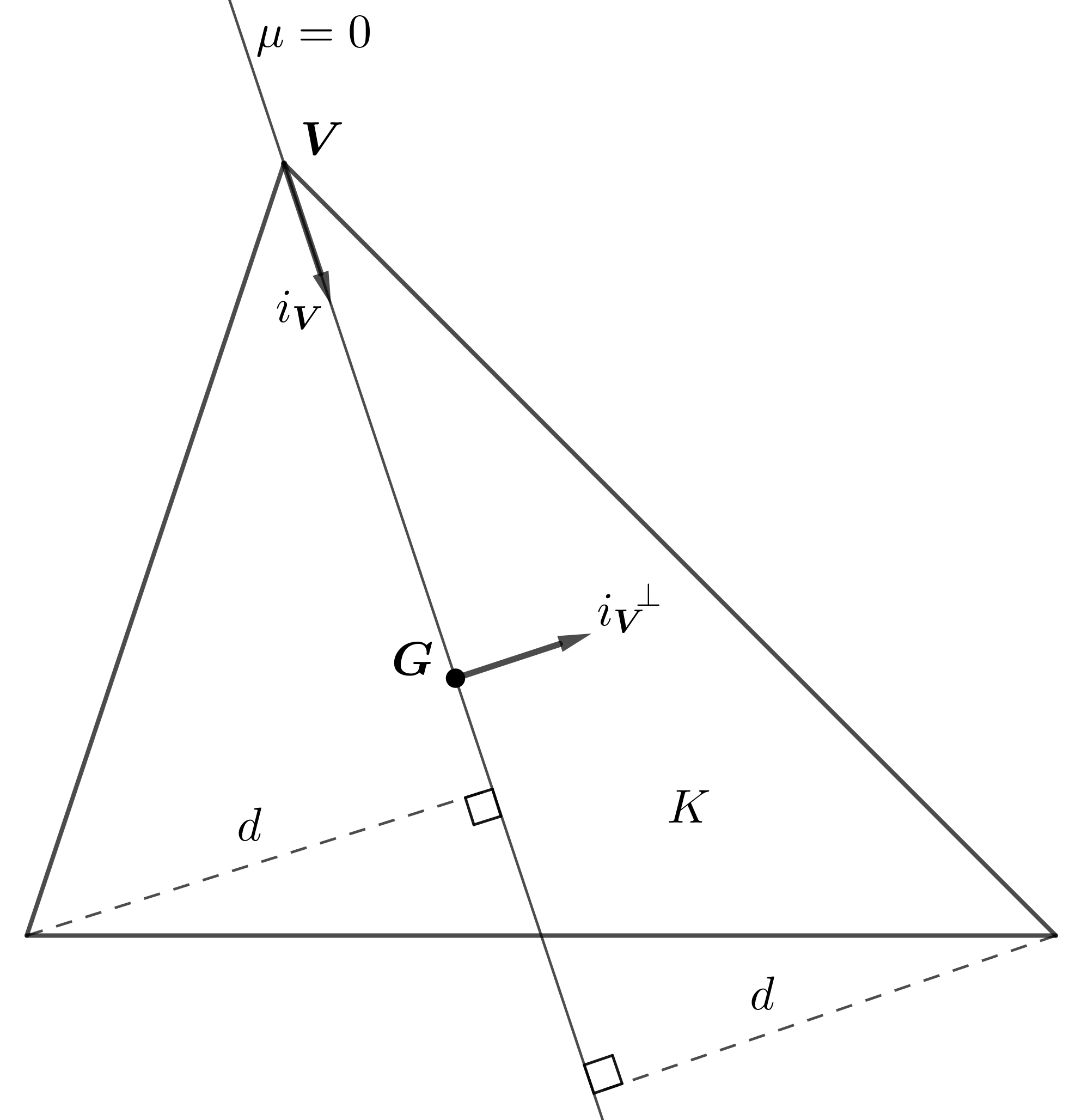}
}\qquad
\subfloat[$\lam(\x)={-\n}\cdot(\x-\M),\
\st_{E\V}=\f\Big(\disp\frac{\lam}H\Big)$ ]{
\includegraphics[width=0.38\textwidth]{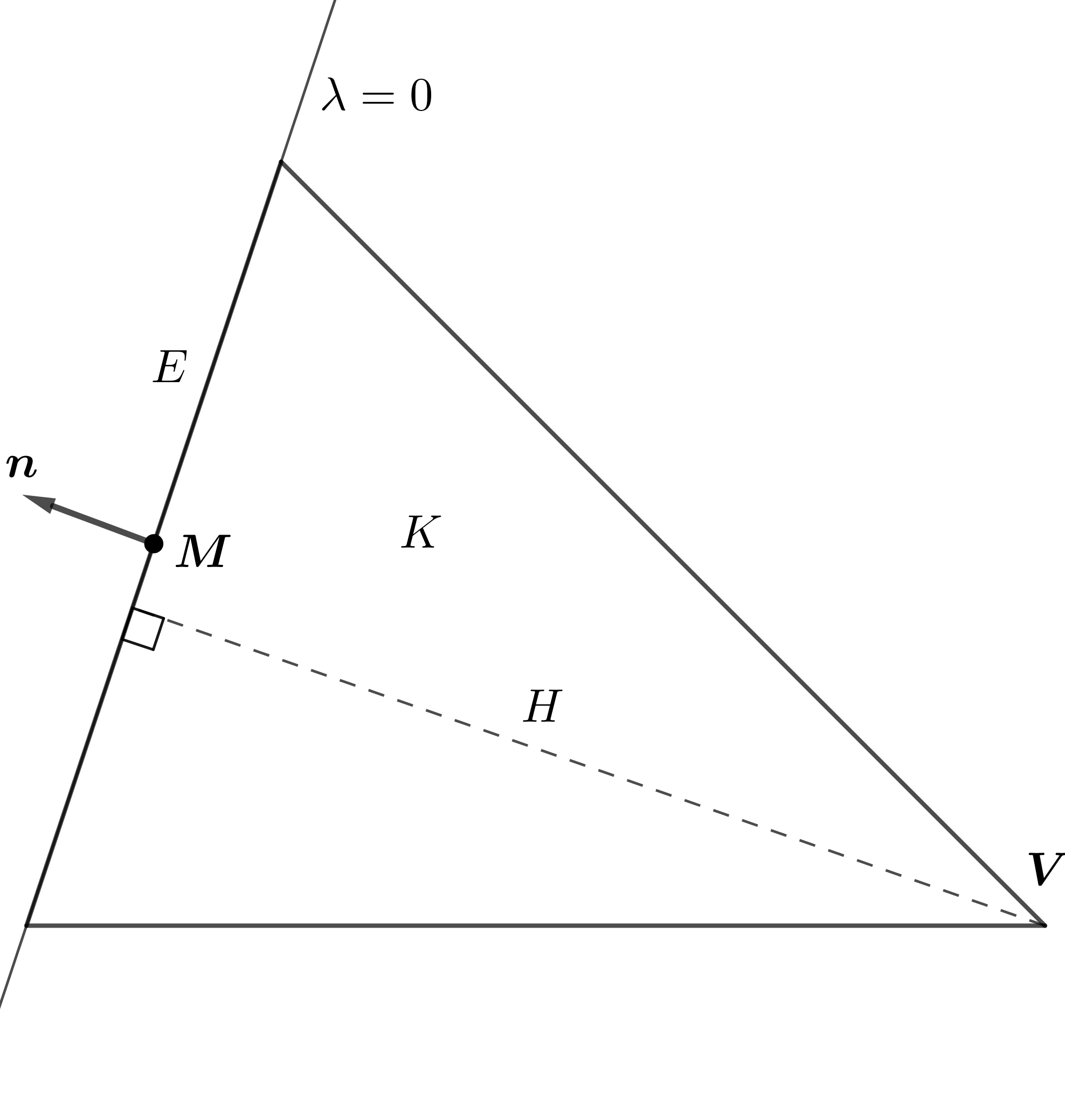}
}\caption{Definition of three basis cubic polynomials over $K$:
  $\ib_{\V}^{+},\ \ib_{\V}^{-},\ \st_{E\V}$}
\label{fig:svze}
\end{figure}
Define two basis cubic polynomial $\ib_{\V}^+, \ib_{\V}^- \in P^3(K)$
determined by $\V, \G$:
\begin{equation}\label{def:ib}
  \ib_{\V}^+(\x) = \iota^+\Big(\frac{\mu(\x)}{d}\Big),\quad
  \ib_{\V}^-(\x) = \iota^-\Big(\frac{\mu(\x)}{d}\Big),
\end{equation}
with two  auxiliary cubic functions $\iota^+, \iota^-$:
\begin{equation*}\label{def:iota}
  \iota^+(t)=8t^3+3t^2,\quad \iota^-(t)=8t^3-3t^2.  
\end{equation*}

We have chosen $\iota^{\pm}$ so that $\nabla \ib^\pm_{\V}$ vanishes at
3 points among 4 interior Lyness points $\G, \G_1, \G_2, \G_3$ of $K$
as in the following lemma.
\begin{lemma}\label{lem:gradgv}
  Let $\V$ be a vertex of a triangle $K$ and $\P$
  be among four 16-Lyness quadrature
points inside $K$.
  Then, we have
  \begin{equation}\label{eqlem:nablaib}
    \nabla\ib_\V^+ (\P) = \left\{\ 
      \arraycolsep=1.4pt\def\arraystretch{1.4}
        \begin{array}{cl}
          \disp\frac3d {\ii_{\V}}^{\perp} \quad & \mbox{ if } \mu (\P) >0,\\
          0 \quad & \mbox { otherwise },
        \end{array}\right.
      \qquad
      \nabla\ib_\V^- (\P) = \left\{\ 
        \arraycolsep=1.4pt\def\arraystretch{1.4}
        \begin{array}{cl}
           \disp\frac3d {\ii_{\V}}^{\perp}  \quad & \mbox{ if } \mu (\P) < 0,\\
          0 \quad & \mbox { otherwise }.
        \end{array}\right.
      \end{equation}
    \end{lemma}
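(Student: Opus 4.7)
The plan is to reduce the gradient computation to two scalar derivative evaluations via the chain rule, and then to identify the value $\mu(\P)/d$ at each of the four interior Lyness points. Since $\nabla\mu = \ii_\V^\perp$ is constant, differentiating \eqref{def:ib} gives
\begin{equation*}
\nabla\ib_\V^\pm(\x) \;=\; \frac{1}{d}\,(\iota^\pm)'\!\bigl(\mu(\x)/d\bigr)\,\ii_\V^\perp,
\end{equation*}
with $(\iota^+)'(t) = 24t^2+6t$ and $(\iota^-)'(t) = 24t^2-6t$. So the entire lemma reduces to knowing the single number $\mu(\P)/d$ for each $\P\in\{\G,\G_1,\G_2,\G_3\}$ and then substituting.

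Two of these four values are immediate. The function $\mu$ vanishes exactly on the median from $\V$ to the midpoint of its opposite edge (this is built into the definitions of $\mu$ and $\ii_\V$), and that median contains both $\G$ and the segment centre $\G_i$ corresponding to $\V=\V_i$; hence $\mu(\G)=\mu(\G_i)=0$, and both $(\iota^\pm)'(0)=0$ yield the trivial case of \eqref{eqlem:nablaib} at these two points. For the remaining two segment centres $\G_j,\G_k$ with $\{j,k\}=\{1,2,3\}\setminus\{i\}$, the plan is to exploit that $\mu/d$ is affine, takes the value $0$ at $\V$, and, after fixing the orientation of $\ii_\V^\perp$, takes values $-1$ and $+1$ at $\V_j,\V_k$ respectively (because $d=|\mu(\V_j)|=|\mu(\V_k)|$ with $\V_j,\V_k$ on opposite sides of the median). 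The barycentric expansion $\G_j=\tfrac12\V_j+\tfrac14\V+\tfrac14\V_k$ then yields
\begin{equation*}
\mu(\G_j)/d \;=\; -\tfrac14, \qquad \mu(\G_k)/d \;=\; \tfrac14.
\end{equation*}

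Finishing is a direct one-line check of the two cubics: $(\iota^+)'(-\tfrac14)=\tfrac32-\tfrac32=0$, $(\iota^+)'(\tfrac14)=\tfrac32+\tfrac32=3$, while the roles of $\pm\tfrac14$ interchange for $\iota^-$. Substituting back into the chain-rule formula produces exactly the two cases of \eqref{eqlem:nablaib}, with the nonzero value $(3/d)\ii_\V^\perp$ appearing only at the unique Lyness point lying in the correct half-plane $\{\mu>0\}$ (resp.\ $\{\mu<0\}$) for $\ib_\V^+$ (resp.\ $\ib_\V^-$). I do not anticipate any genuine obstacle: the lemma is essentially a verification that $\iota^\pm$ were engineered so that their derivatives have a simple zero at $0$ and a second zero at $\mp\tfrac14$. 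The only step worth double-checking is the barycentric identification $\mu(\G_j)/d=\pm\tfrac14$, and even that is forced once one observes that $\mu/d$ is the signed affine coordinate vanishing on the $\V$-median and normalised to $\pm 1$ at the other two vertices.
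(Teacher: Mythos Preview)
Your proof is correct and follows essentially the same approach as the paper: both compute $\nabla\ib_\V^\pm$ via the chain rule, identify $\mu(\P)/d\in\{0,0,\tfrac14,-\tfrac14\}$ at the four interior Lyness points using their barycentric representation and the affinity of $\mu$, and then evaluate $(\iota^\pm)'$ at these values. The only cosmetic difference is notation (the paper names the other two vertices $\V^+,\V^-$ by the sign of $\mu$ rather than indexing them as $\V_j,\V_k$).
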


\begin{proof}
  Let $\V^+, \V^-$ be two vertices of triangle $K$ other than $\V$ such that
  \[ \mu(\V^+)  >0, \quad  \mu(\V^-) <0.  \]
The four 16-Lyness quadrature
points inside $K$ are the gravity center $\G$ and
\begin{eqnarray*}
  \G_0=\frac12\V + \frac14\V^+ + \frac14\V^-,\quad
  \G^+=\frac14\V + \frac12\V^+ + \frac14\V^-,\quad
  \G^-=\frac14\V + \frac14\V^+ + \frac12\V^-.
\end{eqnarray*}
The two points $\G, \G_0$ lie on the line $l=\{\x : \mu(\x)=0 \}$
and we simply calculate the common distance between $l$ and $\G^\pm$
is a quarter of $d$ between $l$ and $\V^\pm$. Thus we have
\begin{equation}\label{eq:valmuv}
  \mu(\G)=\mu(\G_0) =0,\quad \mu(\G^+) =\frac d4, \quad \mu(\G^-) = -\frac d4.
\end{equation}
From the definition of $\mu, \ib_\V^+$ in \eqref{def:mu},\eqref{def:ib}, we have
\begin{equation}\label{eq:nablaib}
  \nabla\ib_\V^+(\x) = \frac1d {\iota^+}'\Big(\frac{\mu(\x)}{d}\Big) {\ii_\V}^{\perp}.  
\end{equation}
We prove \eqref{eqlem:nablaib} for $\nabla\ib_\V^+ $
by \eqref{eq:valmuv}, \eqref{eq:nablaib}, since
${\iota^+}'(0)={\iota^+}'(-1/4)=0,\ {\iota^+}'(1/4)=3$.
We can repeat the same argument for $\nabla\ib_\V^-$ in \eqref{eqlem:nablaib}.
\end{proof}

Now, we form a new basis of $P^3$ over $K$ in the following lemma.

\begin{lemma}\label{lem:P3basis}
  Let $K$ be a triangle with
   vertices $\V_1, \V_2, \V_3$ and their respective opposite edges $E_1, E_2, E_3$.
  Then, we have
  \begin{equation}\label{eq:spanP3}
    P^3\ =\ < 1, \ib_{\V_1}^+, \ib_{\V_1}^-,  \ib_{\V_2}^+,  \ib_{\V_2}^-,  \ib_{\V_3}^+,
    \ib_{\V_3}^-,\st_{E_1\V1}, \st_{E_2\V2},\st_{E_3\V3} >.
  \end{equation}
\end{lemma}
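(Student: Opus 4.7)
Since $\dim P^3(K)=10$ and the family listed in \eqref{eq:spanP3} has exactly $10$ members, the claim is equivalent to linear independence. My plan is to assume
\[ q \;=\; c_0 \;+\; \sum_{i=1}^{3}(\alpha_i\,\ib_{\V_i}^{+}+\beta_i\,\ib_{\V_i}^{-}) \;+\; \sum_{i=1}^{3}\gamma_i\,\st_{E_i\V_i} \]
vanishes identically on $K$, and to recover every coefficient by testing $q$ and $\nabla q$ at the Lyness quadrature points and at the vertices, for which $\ib_{\V}^{\pm}$ and $\st_{E\V}$ were tailored.

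\textbf{Step 1 (gradients at the four interior Lyness points).} Apply Lemma \ref{lem:gradgv} at $\G$ and at the three midpoints $\G_k$ of $\overline{\V_k\M_k}$. At the centroid $\G$, $\mu_{\V_i}(\G)=0$ for every $i$, so all six $\nabla\ib_{\V_i}^{\pm}$ contributions vanish and $\nabla q(\G)=\sum_i\gamma_i\,\nabla\st_{E_i\V_i}(\G)$ gives two scalar equations purely in $\gamma_1,\gamma_2,\gamma_3$. At each $\G_k$, $\nabla\ib_{\V_k}^{\pm}(\G_k)=0$, and for $i\neq k$ exactly one of $\nabla\ib_{\V_i}^{+}(\G_k)$, $\nabla\ib_{\V_i}^{-}(\G_k)$ equals $(3/d_i)\,\ii_{\V_i}^{\perp}$ while the other vanishes. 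Projecting $\nabla q(\G_k)$ onto $\ii_{\V_i}^{\perp}$ for each $i\neq k$ therefore isolates one of $\alpha_i,\beta_i$ up to the explicit contribution $\gamma_j\,\nabla\st_{E_j\V_j}(\G_k)$, which is a known multiple of the outward unit normal.

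\textbf{Step 2 (point values and mean).} Since $\mu_{\V_j}(\V_j)=0$, $\ib_{\V_j}^{\pm}(\V_j)=0$; from $\f(1)=1$ and $\f(0)=-1/10$ we get $\st_{E_j\V_j}(\V_j)=1$ and $\st_{E_i\V_i}(\V_j)=-1/10$ for $i\neq j$, while the cross values $\ib_{\V_i}^{\pm}(\V_j)$, $i\neq j$, reduce to the explicit constants $\iota^{\pm}(\pm 1)$. The three equations $q(\V_j)=0$ thus become concrete linear relations among $c_0$, the $\gamma_j$'s and the $\alpha_i,\beta_i$'s. The tenth relation comes from $\int_K q\,dA=0$: Lemma \ref{lem:quadrule} with $q\equiv 1$ yields $\int_K\st_{E_i\V_i}=|K|/100$, and the $\int_K\ib_{\V_i}^{\pm}$ are read off from \eqref{eq:Lynlist}.

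\textbf{Step 3 and main obstacle.} The closing move is a triangular elimination: Step 1 expresses each pair $(\alpha_i,\beta_i)$ in terms of $(\gamma_1,\gamma_2,\gamma_3)$; substituting into Step 2 gives a $4\times 4$ system in $(c_0,\gamma_1,\gamma_2,\gamma_3)$. I expect the hard part to be verifying non-singularity of this reduced system, because $(\alpha,\beta)$ and $\gamma$ couple through the gradients $\nabla\st_{E_j\V_j}$ at the four interior Lyness points, and the necessary cancellations rely on the precise numerical choices in $\f(t)=\tfrac{1}{10}(56t^3-63t^2+18t-1)$ and $\iota^{\pm}(t)=8t^3\pm 3t^2$ — the same choices that made Lemmas \ref{lem:quadrule} and \ref{lem:gradgv} work. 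Concretely, one has to compute $\nabla\st_{E_j\V_j}$ at $\G,\G_1,\G_2,\G_3$ using $\f'(1/3)=-8/15$ together with $\f'(1/2)$ and $\f'(1/4)$, and show that the resulting $4\times 4$ determinant is nonzero; the strong pointwise scales ($|\f(1)|=1$ against $|\f(0)|=1/10$) suggest diagonal dominance after the $(\alpha_i,\beta_i)$ are eliminated, but this is the only nontrivial arithmetic in the argument.
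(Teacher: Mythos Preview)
Your Step~3 is not a proof but a hope: you correctly identify that a coupled system in $(c_0,\gamma_1,\gamma_2,\gamma_3)$ is the crux, and then stop short of showing it is nonsingular. The paper sidesteps this coupling altogether. Rather than evaluate $\nabla q$ pointwise, it pairs $q$ with $\div\v$ for a test vector $\v=(\alpha,\beta)v$, where $v\in P^4$ vanishes on $\partial K$ and at two of the three interior Lyness points $\G_1,\G_2$. Because $v$ vanishes on both edges through each vertex, $\div\v(\V_i)=0$, so Lemma~\ref{lem:quadrule} gives $(\st_{E_i\V_i},\div\v)=0$ and the sting functions drop out \emph{before} any gradient is taken. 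What remains is $0=(q,\div\v)=-(\nabla q,\v)$, a degree-$6$ integrand to which the 16-point Lyness rule applies exactly; only the surviving interior node $\G_3$ contributes, and Lemma~\ref{lem:gradgv} together with a choice of $(\alpha,\beta)$ parallel to one $\ii_{\V_j}$ isolates a single $\ib$-coefficient at a time. Once all six $\ib$-coefficients are zero, the sting coefficients are killed separately by pairing $q$ against a mean-zero cubic with prescribed vertex values, again via Lemma~\ref{lem:quadrule}; no determinant ever needs to be checked.

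Your direct route can in fact be closed, and more easily than you feared. From $\nabla q(\G)=0$ and $\f'(1/3)\neq 0$ one gets $\sum_i(\gamma_i/H_i)\n_i=0$; since $\n_i/H_i$ is a fixed multiple of $|E_i|\n_i$ and $\sum_i|E_i|\n_i=0$, this already forces $\gamma_1=\gamma_2=\gamma_3=:\gamma$. The arithmetic you dreaded then dissolves because of the coincidence $\f'(1/4)=\f'(1/2)=-3/10$: at each $\G_k$ the combined sting-gradient contribution is $\gamma\cdot(3/10)\sum_i\n_i/H_i=0$, so the equations $\nabla q(\G_k)=0$ involve only the $\ib$-coefficients and yield $\alpha_i=\beta_i=0$ directly. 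A vertex value plus the mean then give a trivial $2\times2$ system in $(c_0,\gamma)$. The paper's device trades this special arithmetic for one integration by parts; either way there is no genuinely hard determinant.
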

\begin{proof}
 Assume a linear combination $q$ of 10 functions in \eqref{eq:spanP3} vanishes, that is,
  \begin{equation*}
    q=c_1+ c_2\ib_{\V_1}^+ +c_3 \ib_{\V_1}^- + c_4  \ib_{\V_2}^+ + c_5  \ib_{\V_2}^-
    +c_6\ib_{\V_3}^+ + c_7\ib_{\V_3}^-+ c_8 \st_{E_1\V_1} + c_9 \st_{E_2\V_2} 
+ c_{10} \st_{E_3\V_3} =0, 
  \end{equation*}
  for some scalars $c_1, c_2,\cdots,c_{10}$.
  
  As in Figure \ref{fig:Lyn16}, let $\G_i$ be the interior 16-Lyness points
  corresponding to $\V_i$, $i=1,2,3$. We can choose  a quartic polynomial $v$ vanishing on $\p K$ and satisfying  \[v(\G_1)=v(\G_2)=0,\quad v(\G_3)=1.\]
For two scalars $\alpha, \beta$,  define
\[ \v=(\alpha , \beta )v.\]
We note from the quadrature rule in Lemma \ref{lem:quadrule}, 
\begin{equation}\label{eq:sevdivani}
  (\st_{E_i\V_i}, \div \v)=0,\quad i=1,2,3.
\end{equation}
Thus, by 16-Lyness quadrature rule in \eqref{eq:Lyn16}, \eqref{eq:Lynlist}
and the property of $\nabla\ib_{\V_i}^{\pm}, i=1,2,3$ 
in Lemma \ref{lem:gradgv}, we expand
\begin{equation}\label{expan:basisgv}
  \arraycolsep=1.4pt\def\arraystretch{1.6}
    \begin{array}{lcl}
    0&=&(q, \div\v)=-(\nabla q, \v) 
= -\nabla (c_2\ib_{\V_1}^+  + c_5\ib_{\V_2}^-)(\G_3)\cdot\v(\G_3)w_{15}|K|\\
&=& (c_2 \gam_1{\ii_{\V_1}}^{\perp} + c_5 \gam_2 {\ii_{\V_2}}^{\perp}) \cdot (\alpha,\beta)w_{15}|K|,  
  \end{array}
\end{equation}
for some nonzero scalars $\gam_1, \gam_2$.

If we choose $(\alpha,\beta)={\ii_{\V_2}}$ in \eqref{expan:basisgv},
we conclude $c_2=0$ and sequentially $c_5=0$, 
since  $\ii_{\V_1}, \ii_{\V_2}$ are not parallel.
By similar argument, we have $c_3=c_4=c_6=c_7=0$.

Now choose a cubic polynomial $p$  such  that its  mean over $K$ vanishes and 
 \[ p(\V_1)=1,\quad p(\V_2)=p(\V_3)=0.\]
 Then, by quadrature rule in Lemma \ref{lem:quadrule}, we have
 \[ 0 = (q,p) = (c_8 \st_{E_1\V_1},p)= c_8\frac{|K|}{100} p(\V_1).\]
 Thus, $c_8=0$ and similarly, $c_9=c_{10}=0$.
 It completes the proof, since  $\dim P^3=10$.
\end{proof}

%%%%%%%%%%%%%%%%%%%%%%%%%%%%%%%%%%%%%%%%%%%%%%%%%%%%%%%%%%%%%
\section{Error in pressure}\label{sec:errprs}
    Let
    $(\u,p)\in [H_0^1(\O)]^2\times L_0^2(\O)$
    and
    $(\u_h, p_h)\in  X_h\times M_h$
    be the solutions for the continuous and discrete Stokes problems
    \eqref{prob:conti}, \eqref{prob:discr}, respectively.
    There exists a standard projection $\Pi_h p \in M_h$ of $p$
    which is continuous in $\O$. Denoting the error in pressure by
    \begin{equation}\label{def:eh} 
      e_h = p_h - \Pi_h p,
    \end{equation}
    we will analyze that $e_h$ is stable except the
    spurious component of $e_h$ caused by  quasi singular vertices.

    By Theorem \ref{thm:veloerror}, we note  that, if $\u\in [H^5(\O)]^2$ and $p\in H^4(\O)$, then
  \begin{equation}\label{eq:ehdivest1}
 (e_h, \div\v_h) \le Ch^4(\upnorm)|\v_h|_1\quad \mbox{ for all } \v\in X_h,
\end{equation}
since $e_h$ satisfies
\begin{equation}\label{eq:eqeh}
  (e_h, \div\v_h)=(\nabla \u-\nabla\u_h, \nabla\v_h) + (p-\Pi_hp,\div\v_h)
  \quad \mbox{ for all } \v\in X_h . 
\end{equation}
We will split $e_h$ into the interior error $e_h^G$ and  sting error $e_h^S$:
\begin{equation}\label{eq:spliteh}
 e_h = e_h^G + e_h^S,
\end{equation}
where
\begin{equation*}
  e_h^G\big|_K \in<1, \ib_{\V_1}^+,  \ib_{\V_1}^-,  \ib_{\V_2}^+,  \ib_{\V_2}^-,  \ib_{\V_3}^+,  \ib_{\V_3}^->,\quad e_h^S\big|_K \in < \st_{E_1\V_1},  \st_{E_2\V_2},  \st_{E_3\V_3} >,
\end{equation*}
for each $K\in \Th$ with vertices $\V_1, \V_2, \V_3$ and
their respective opposite edges $E_1, E_2, E_3$.

 For each vertex $\V$, let $\EE_{\V}$ be a set
 of all opposite edges of $\V$. 
Then, we can cluster the sting error $e_h^S$ by vertices as
\begin{equation}\label{eq:spliteh2} e_h^S = \sum_{\V\in\VV_h} e_h^{\V},  \end{equation}
where 
 \[ e_h^{\V} \in <\st_{E_1\V},\st_{E_2\V},\cdots,\st_{E_J\V}>, \]
 for all opposite edges $E_j\in \EE_{\V},\ j=1,2,\cdots,J=\#\EE_{\V}.$

 In the remaining of this section, we will show the error $e_h$ is stable
 except the sting error $e_h^{\V}$ for quasi singular vertices $\V$.

\subsection{Inequalities for $e_h^{\V}$ in back-to-back triangles}
We first estimate $\nabla e_h^G$ by choosing a proper test function
$\v_h\in X_h$ in \eqref{eq:eqeh}. 

 \begin{lemma}\label{lem:normehs}
Let $h$ be the diameter of a triangle $K$ in $\Th$. Then we have
   \[ h\|\nabla e_h^G\|_{0,K} \le \Cs (|\u-\u_h|_{1,K} + \|p-\Pi_hp\|_{0,K}).\]
 \end{lemma}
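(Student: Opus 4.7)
The plan is to test the identity \eqref{eq:eqeh} with a bubble-type function in $X_h$ supported inside $K$ that annihilates the sting component $e_h^S$ altogether. Writing $\lam_1,\lam_2,\lam_3$ for the barycentric coordinates of $K$ and $b=\lam_1\lam_2\lam_3$ for the cubic bubble, I consider the local space
\[
   V_{\mathrm{bub}}(K)\ =\ \{\,\v_h = bL\ :\ L \in [P^1(K)]^2\,\},
\]
viewed as a subspace of $X_h$ after extension by zero off $K$. Because both $b$ and $\nabla b$ vanish at every vertex of $K$, the product rule gives $\nabla\v_h(\V_i)=0$ for $i=1,2,3$; Lemma \ref{lem:quadrature2} then forces $(\st_{E_j\V_j},\div\v_h)_K=0$ for every $j$. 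Consequently $(e_h^S,\div\v_h)=0$, and since $\v_h$ is supported in $K$,
\[
  (e_h,\div\v_h)\ =\ (e_h^G,\div\v_h)_K\qquad\text{for every }\v_h\in V_{\mathrm{bub}}(K).
\]

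The heart of the argument is a finite-dimensional inf-sup on a single triangle: I claim there is $c_0=c_0(\sigma)>0$ such that
\[
  \sup_{\v\in V_{\mathrm{bub}}(K)\setminus\{0\}}\frac{(q,\div\v)_K}{|\v|_{1,K}}\ \ge\ c_0\,h\,\|\nabla q\|_{0,K}\qquad\forall\, q\in\bigl\langle 1,\ib_{\V_1}^{\pm},\ib_{\V_2}^{\pm},\ib_{\V_3}^{\pm}\bigr\rangle.
\]
I would establish this first on the reference triangle $\widehat K$ and then scale, the factor $h$ appearing because $\div_x$ picks up $h^{-1}$ under the affine pullback while $|\,\cdot\,|_{1,K}$ and $\|\nabla(\cdot)\|_{0,K}$ are scale-invariant in two dimensions. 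On $\widehat K$, integration by parts yields $(q,\div\v)=-(\nabla q,\v)$; since the integrand has total degree at most $6$, the $16$-point Lyness rule \eqref{eq:Lyn16} is exact. The boundary Lyness points contribute nothing because $\v|_{\p\widehat K}=0$, and the centroid contributes nothing because $\mu(\G)=0$ for every $\ib_\V^\pm$. Lemma \ref{lem:gradgv} tells me that each $\nabla\ib_{\V}^{\pm}$ is nonzero at exactly one of the three median centres $\G_1,\G_2,\G_3$ and points in the direction $\ii_\V^\perp$. On the other hand, any triple of values $(\v(\G_1),\v(\G_2),\v(\G_3))\in(\R^2)^3$ is realised by some $\v=bL\in V_{\mathrm{bub}}(\widehat K)$, since $L\mapsto(L(\G_1),L(\G_2),L(\G_3))$ is an isomorphism of $[P^1(K)]^2$ onto $(\R^2)^3$ and $b(\G_i)>0$. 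The $6\times 6$ Gram matrix therefore block-diagonalises into three $2\times 2$ blocks, one per interior Lyness point, each of which is invertible because the pair of directions $\{\ii_{\V_j}^\perp,\ii_{\V_k}^\perp\}$ meeting at a given $\G_i$ is linearly independent, with determinant bounded below by $\sin^2\vts$ thanks to shape regularity.

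Once this local inf-sup is in hand, I pick $\v_h^{\ast}\in V_{\mathrm{bub}}(K)$ realising the sup for $q=e_h^G|_K$, substitute into \eqref{eq:eqeh}, and apply Cauchy--Schwarz together with $\|\div\v_h^{\ast}\|_{0,K}\le\sqrt{2}\,|\v_h^{\ast}|_{1,K}$:
\[
  c_0\,h\,\|\nabla e_h^G\|_{0,K}\,|\v_h^{\ast}|_{1,K}\ \le\ (e_h^G,\div\v_h^{\ast})_K\ =\ (e_h,\div\v_h^{\ast})\ \le\ C\bigl(|\u-\u_h|_{1,K}+\|p-\Pi_h p\|_{0,K}\bigr)\,|\v_h^{\ast}|_{1,K},
\]
and cancelling $|\v_h^{\ast}|_{1,K}$ yields the stated estimate. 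The hard part is the reference-triangle inf-sup, but Lemma \ref{lem:gradgv} — engineered precisely so that every $\nabla\ib_\V^{\pm}$ behaves like a Dirac contribution under Lyness quadrature — reduces it to the pairwise independence of the three median directions, a quantitative fact controlled by $\sigma$.
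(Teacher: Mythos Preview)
Your argument is correct and is essentially the paper's own proof, just packaged as a local inf--sup rather than a coefficient-by-coefficient extraction. The paper also tests with quartics vanishing on $\partial K$ (hence necessarily of the form $bL$ with $L\in P^1$), invokes the vertex quadrature of Lemma~\ref{lem:quadrule} to kill the sting part, and then uses the 16-point Lyness rule together with Lemma~\ref{lem:gradgv} to isolate each coefficient $c_j$ at a single interior Lyness point; your block-diagonal $6\times6$ system is precisely the simultaneous version of that same computation.
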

 \begin{proof}
   With the same notations in Lemma \ref{lem:P3basis}, we represent
   \[ e_h^G\big|_K =
     c_1+ c_2\ib_{\V_1}^+ +c_3 \ib_{\V_1}^- + c_4  \ib_{\V_2}^+ + c_5  \ib_{\V_2}^-
     +c_6\ib_{\V_3}^+ + c_7\ib_{\V_3}^-, \]
   for some constants $c_1, c_2,\cdots, c_7$.
   Denote by $\G_i$, the interior 16-Lyness points
   corresponding to $\V_i$, $i=1,2,3$ as in Figure  \ref{fig:Lyn16}.
   Then, there exists a unique quartic function $v\in P^4$ vanishing on $\p K$ and
   $v(\G_1)=v(\G_2)=0, v(\G_3)=1$. We note that
   \begin{equation}\label{eq:normbv}
     |v|_{1,K}\le \Cs \quad |\ib_{\V_i}^{\pm}|_{1,K} \le \Cs ,\ i=1,2,3.
   \end{equation}
   
   Choose a test function $\v_h\in X_h$ such that $\v_h|_K=v\ii_{\V_2}$
   and vanishes outside $K$. Then, we have from \eqref{eq:sevdivani} and
   Lemma \ref{lem:quadrule}, \ref{lem:gradgv}, 
   \begin{equation}\label{eq:ehdivhk1}
     (e_h,\div\v_h)= (e_h^G,\div\v_h)_K =
     (\nabla e_h^G,\v_h)_K= 3c_2{\ii_{\V_1}}^{\perp}\cdot\ii_{\V_2} w_{15} |K|/d,
   \end{equation}
   where $d$ is the distance from $\V_2$ to the line connecting
   $\V_1$ and the gravity center $\G$.

   Now, by \eqref{eq:eqeh}, \eqref{eq:normbv}, \eqref{eq:ehdivhk1},
   we estimate
   \begin{equation*}
     \|c_2\nabla\ib_{\V_1}^{+}\|_{0,K}\le \Cs h^{-1} (|\u-\u_h|_{1,K} + \|p-\Pi_hp\|_{0,K}).
   \end{equation*}
    It completes the proof,
   by repeating the same arguments for  $c_3, c_4,\cdots,c_7$.
 \end{proof}

  Let $K$ be a triangle in $\Th$ and $E$ an edge of $K$
  between two vertices $\V_1, \V_2$ of $K$.
  Denote by $\t$, the unit tangent vector of $E$, that is,
\[ \t= \overrightarrow{\V_1\V_2}\Big/|\overrightarrow{\V_1\V_2}|.\]
We need an elementary test function $v$ in the following lemma
to estimate the sting error $e_h^S$.
\begin{lemma}\label{lem:ext_test_ftn}
There exists a quartic polynomial $v\in P^4$ such that
$v$ vanishes on $\p K\setminus E$ and
\begin{equation}\label{cond:testv_0}
  \int_{E} v\ ds =0,\quad \frac{\p v}{\p\t}(\V_1)=1,\quad
  \frac{\p v}{\p\t}(\V_2)=0,
 \quad |v|_{1,K} \le \Cs |E|.
\end{equation}
\end{lemma}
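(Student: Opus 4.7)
The plan is to construct $v$ explicitly by parametrizing all quartics that vanish on $\p K \setminus E$. Let $\lam_1, \lam_2, \lam_3$ be the barycentric coordinates of $K$ at the vertices $\V_1, \V_2, \V_3$; then the two edges other than $E$ are $\{\lam_1 = 0\}$ and $\{\lam_2 = 0\}$, so any $v \in P^4$ vanishing on these two edges factors as $v = \lam_1 \lam_2\, q$ with $q \in P^2(K)$. This ansatz disposes of the boundary condition automatically and leaves $q$ as the only unknown.

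Next I translate the three remaining requirements into conditions on $q$. Using the tangential derivatives $\p_{\t}\lam_1\big|_E = -1/|E|$ and $\p_{\t}\lam_2\big|_E = 1/|E|$, the product rule gives $\p_{\t} v(\V_1) = q(\V_1)/|E|$ and $\p_{\t} v(\V_2) = -q(\V_2)/|E|$, so the two derivative conditions become $q(\V_1) = |E|$ and $q(\V_2) = 0$. Parametrizing $E$ by $t \in [0,1]$ with $\V_1\leftrightarrow t=0$ and $\V_2\leftrightarrow t=1$, I set $q|_E(t) = |E|(1-t) + A\, t(1-t)$, which automatically meets the endpoint values. The integral condition then reduces to
\[
 0 = \int_E v\, ds = |E|\int_0^1 t(1-t)\bigl[|E|(1-t) + A\,t(1-t)\bigr]\, dt
 = \frac{|E|^2}{12} + \frac{A\,|E|}{30},
\]
where I use $\int_0^1 t(1-t)^2\, dt = 1/12$ and $\int_0^1 t^2(1-t)^2\, dt = 1/30$. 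This fixes $A = -5|E|/2$. A concrete quadratic with this trace is $q = |E|\lam_1 - \tfrac{5}{2}|E|\,\lam_1\lam_2$, giving the explicit formula $v = |E|\lam_1^2\lam_2 - \tfrac{5}{2}|E|\,\lam_1^2\lam_2^2 \in P^4$.

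For the seminorm bound I invoke standard scaling. By shape regularity, $|E|$ and $h_K$ are equivalent up to a $\Cs$-factor, $|\nabla\lam_i| \le \Cs/h_K$, and $|\lam_i| \le 1$ on $K$. The product rule applied to $v = |E|\lam_1^2\lam_2 - \tfrac{5}{2}|E|\,\lam_1^2\lam_2^2$ then yields $|\nabla v| \le \Cs |E|/h_K$ pointwise, and combining with $|K| \le h_K^2/2$ gives
\[
 |v|_{1,K}^2 \le \Cs^2 \frac{|E|^2}{h_K^2}\,|K| \le \Cs^2 |E|^2,
\]
so that $|v|_{1,K} \le \Cs|E|$.

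The main obstacle is purely bookkeeping: keeping the orientation of $\t$ consistent so the signs of $\p_{\t}\lam_i\big|_E$ come out right, performing the two Beta-function integrals to pin down $A = -5|E|/2$, and choosing any quadratic extension of $q|_E$ off $E$ (the choice $q = |E|\lam_1 - \tfrac{5}{2}|E|\,\lam_1\lam_2$ is the simplest). Once the factorization $v = \lam_1\lam_2\,q$ is in hand, the rest is mechanical.
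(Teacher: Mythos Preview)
Your proof is correct and is essentially the same as the paper's: the paper works on the reference triangle and writes down $\hat v = x(x+y-1)^2(-\tfrac52 x+1)$, then sets $v = |E|\,\hat v\circ F^{-1}$, which under the affine map is exactly your $|E|\lambda_1^2\lambda_2(1-\tfrac52\lambda_2)$. The only cosmetic difference is that you work intrinsically in barycentric coordinates and \emph{derive} the coefficient $-5/2$ from the integral condition, whereas the paper simply states the polynomial and verifies; the seminorm bound you obtain by direct scaling is likewise equivalent to the paper's pullback argument.
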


\begin{proof}
  In the reference triangle $\widehat{K}$ with vertices $(0,0), (1,0), (0,1)$,
  let
  $$\widehat{E}=\{(x,0)\ :\ 0\le x \le 1\},\quad \widehat{\V}_1=(0,0),
  \quad \widehat{\V}_2=(1,0),
  \quad \widehat{\t}=(1,0).$$
  Then a quartic polynomial $\widehat{v}=x(x+y-1)^2(-5/2x+1)$
  satisfies 
  \begin{equation}\label{cond:testvhat_0}
    \int_{\widehat{E}} \widehat{v}\ ds =0,\quad
    \disp\frac{\p\widehat{v}}{\p\widehat{\t}}(\widehat{\V}_1)
    =1,\quad
    \disp\frac{\p\widehat{v}}{\p\widehat{\t}}(\widehat{\V}_2)
    =0.
  \end{equation}

  Define $v=|E|\ \hspace{1pt}\widehat{v}\circ F^{-1}$
  for an affine map $F:\widehat{K}\longrightarrow K$ such that
  $ F(\widehat{\V}_i)=\V_i, i=1,2$.
  Then, from the definition of $\widehat{v}$ and \eqref{cond:testvhat_0},
  $v$ vanishes on $\p K\setminus E$ and
  satisfies \eqref{cond:testv_0}.
\end{proof}

The sting error $e_h^{\V}$ has an interesting characteristic
for each pair of two  back-to-back triangles sharing $\V$
in the following lemma.
 \begin{lemma}\label{lem:back2back}
   Let  two triangles $K_1, K_2$ share a vertex $\V$
   and an edge $E$ as in Figure \ref{fig:back2back}.
    Assume two scalars $\alp_1, \alp_2$ make that
 \begin{equation*}
   e_h^{\V}\big|_{K_1\cup K_2} = \alp_1 \st_{E_1\V} + \alp_2 \st_{E_2\V},   
 \end{equation*}
 for two opposite edges $E_1, E_2$ of $\V$ in $K_1, K_2$, respectively.
   Then for any unit vector $\bxi$, we have
  \begin{equation}\label{eq:ciEi}
  \left|  (\alp_1 \overrightarrow{\V\V_1} - \alp_2 \overrightarrow{\V\V_2})
    \cdot \bxi \right| \le \Cs(|\u-\u_h|_{1,K_1\cup K_2} + \|p-\Pi_hp\|_{0,K_1\cup K_2}),
  \end{equation}
  where 
  $\V_1, \V_2$ are the respective opposite vertices of $E$
  in $K_1, K_2$.
\end{lemma}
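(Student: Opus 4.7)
The plan is to build a test function $\v_h\in X_h$, supported on $K_1\cup K_2$, whose divergence at $\V$ probes $e_h^\V$ in the direction of $\bxi$ while whose gradient vanishes at every other vertex of $K_1\cup K_2$; plugged into \eqref{eq:eqeh} with the decomposition \eqref{eq:spliteh}--\eqref{eq:spliteh2} of $e_h$, this $\v_h$ will isolate $(e_h^\V,\div\v_h)$ from the remaining sting errors via Lemma~\ref{lem:quadrule} and leave only the controllable interior-error term. Let $\V_0$ be the second endpoint of $E$, so $K_i$ has vertices $\V,\V_0,\V_i$, and write $\lambda_\V,\lambda_{\V_0}$ for the barycentric coordinates in each $K_i$. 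For a free scalar $a\in\mathbb{R}$, take
\[
  \v_h\;=\;a\,\lambda_\V^2\lambda_{\V_0}\bigl(\lambda_\V-\tfrac{3}{2}\lambda_{\V_0}\bigr)\,\bxi^{\perp}\ \text{on each }K_i,
\]
extended by zero outside $K_1\cup K_2$, where $\bxi^{\perp}$ is the $90^{\circ}$ counterclockwise rotation of $\bxi$. The scalar factor $f$ vanishes on the two edges of each $K_i$ other than $E$, and its restriction to $E$ is a polynomial in the arclength alone, so $\v_h$ is continuous across $E$ and lies in $X_h$.

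Four properties of $f$ drive the proof. The cubic $\lambda_\V^2\lambda_{\V_0}$ has a double zero at $\V_0$ (from $\lambda_\V^2$) and at $\V_i$ (from the two vanishing factors); multiplying by the linear $\lambda_\V-\tfrac{3}{2}\lambda_{\V_0}$ preserves these double zeros, giving $\nabla f|_{K_i}(\V_0)=\nabla f|_{K_i}(\V_i)=0$. At $\V$ a direct computation gives $\nabla f|_{K_i}(\V)=a\nabla\lambda_{\V_0}|_{K_i}\ne 0$. Finally, $\int_E f\,ds=0$ reduces to the elementary identity $\int_0^1 s(1-s)^2(1-\tfrac{5}{2}s)\,ds=0$, which is precisely what fixes the coefficient $-\tfrac{3}{2}$; in particular $\int_{K_i}\div\v_h=0$.

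Applying Lemma~\ref{lem:quadrule} with $q=\div\v_h\in P^3(K_i)$ turns every sting-error term $(e_h^{\V'},\div\v_h)$ with $\V'\ne\V$ into a sum of multiples of $\div\v_h|_{K_j}(\V')$; such terms vanish either because $\nabla\v_h=0$ at $\V_0,\V_1,\V_2$ or because $\mathrm{supp}\,\v_h\subset K_1\cup K_2$. The same quadrature at $\V$, together with the geometric identity $|K_i|\nabla\lambda_{\V_0}|_{K_i}=\mp\tfrac{1}{2}\overrightarrow{\V\V_i}^{\perp}$ (opposite signs in $K_1,K_2$ because $\V_1,\V_2$ lie on opposite sides of $E$) and the rotation identity $\bxi^{\perp}\!\cdot\w^{\perp}=\bxi\cdot\w$, gives
\[
  (e_h^\V,\div\v_h)\;=\;-\tfrac{a}{200}\bigl(\alp_1\overrightarrow{\V\V_1}-\alp_2\overrightarrow{\V\V_2}\bigr)\cdot\bxi.
\]
Since $\int_{K_i}\div\v_h=0$, the interior contribution $(e_h^G,\div\v_h)_{K_i}$ reduces to the pairing of $\div\v_h$ with the zero-mean part of $e_h^G|_{K_i}$, which Poincaré and Lemma~\ref{lem:normehs} bound by $\Cs(|\u-\u_h|_{1,K_i}+\|p-\Pi_hp\|_{0,K_i})|\v_h|_{1,K_i}$. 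Combining with the estimate $|(e_h,\div\v_h)|\le(|\u-\u_h|_1+\|p-\Pi_hp\|_0)|\v_h|_1$ from \eqref{eq:eqeh} and the standard scaling $|\v_h|_1\le \Cs|a|$, then dividing by $|a|/200$, yields \eqref{eq:ciEi}.

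The main obstacle is constructing $f$ to satisfy all four properties simultaneously. The natural candidate $\lambda_\V^2\lambda_{\V_0}$ already produces the right gradient pattern at the four vertices but has nonzero mean on $E$, and generic corrections destroy either $\nabla f(\V_0)=0$ or the nontrivial $\nabla f(\V)$. The key observation is that corrections of the form $\lambda_\V^2\lambda_{\V_0}\cdot(\text{linear})$ automatically inherit the double zeros at $\V_0,\V_1,\V_2$, reducing $\int_E f\,ds=0$ to a single scalar equation whose solution selects the factor $\lambda_\V-\tfrac{3}{2}\lambda_{\V_0}$.
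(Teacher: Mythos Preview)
Your proof is correct and follows essentially the same route as the paper's. Both arguments construct a quartic test function supported on $K_1\cup K_2$ that vanishes on the outer boundary, has zero mean on $E$, has vanishing gradient at $\V_0,\V_1,\V_2$, and has a prescribed gradient at $\V$; both then apply the quadrature rule of Lemma~\ref{lem:quadrule} to isolate the $e_h^\V$ contribution and use the zero-mean trick together with Lemma~\ref{lem:normehs} to control $(e_h^G,\div\v_h)$. Your barycentric function $\lambda_\V^2\lambda_{\V_0}(\lambda_\V-\tfrac{3}{2}\lambda_{\V_0})$ and the paper's pulled-back function $\lambda_{\V_0}\lambda_\V^2(1-\tfrac{5}{2}\lambda_{\V_0})$ in fact coincide on $E$ and differ only by the interior bubble $\lambda_\V^2\lambda_{\V_0}\lambda_{\V_i}$, so the two constructions are equivalent for the purpose of this lemma. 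The only stylistic difference is that you compute $(s_{E_i\V},\div\v_h)_{K_i}$ directly via Lemma~\ref{lem:quadrule} and the identity $|K_i|\nabla\lambda_{\V_0}|_{K_i}=\mp\tfrac12\overrightarrow{\V\V_i}^{\perp}$, whereas the paper routes the same computation through the derived formula of Lemma~\ref{lem:quadrature2}.
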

 \begin{proof}
   Let $\V_0$ be the vertex of $E$ other than $\V$ and
   $\t$ unit vector such that
   \[ \t= \overrightarrow{\V\V_0}\Big/|\overrightarrow{\V\V_0}|.\]
   From  Lemma \ref{lem:ext_test_ftn}, there exists a quartic function $v_i$
   on $K_i, i=1,2$ such that $v_i$ vanishes on $\p K_i\setminus E$ and
   \begin{equation}\label{cond:viK12}
     \int_{E} v_i\ ds =0,\quad \frac{\p v_i}{\p\t}(\V)=1,\quad
  \frac{\p v_i}{\p\t}(\V_0)=0,
  \quad |v_i|_{1,K_i} \le \Cs |E|.
\end{equation}
We note $v_1$ and $v_2$ coincide on $E$, since quartic functions
have 5 degrees of freedom on $E$.

Given unit vector $\bxi$, denote by $\bxi^{\perp}$, the 90-degree counterclockwisely
rotation of $\bxi$ and choose a test function $\v_h\in X_h$ which
vanishes outside $K_1\cup K_2$ and 
\begin{equation}\label{set:vhbxi}
  \v_h|_{K_i} =v_i\bxi^{\perp},\quad i=1,2.
\end{equation}
Then, from the quadrature rule in Lemma \ref{lem:quadrule}, we have
   \begin{equation}\label{eq:ehdivalp12}
     (e_h,\div\v_h)= (\alp_1\st_{E_1\V},\div\v_h)_{K_1} +
     (\alp_2\st_{E_2\V},\div\v_h)_{K_2} + (e_h^G,\div\v_h)_{K_1\cup K_2}.
   \end{equation}

First, from \eqref{eq:eqeh} and \eqref{cond:viK12}, we obtain
   \begin{equation}\label{eq:ehdiv12hmain}
    | (e_h,\div\v_h)| \le \Cs|E|(|\u-\u_h|_{1,K_1\cup K_2} + \|p-\Pi_hp\|_{0,K_1\cup K_2}).
   \end{equation} 
Second, let $\m_i$ be the mean of $e_h^G$ over $K_i$ and $h_i$ the diameter of $K_i$, $i=1,2$.
 Then, by Lemma \ref{lem:normehs}, we estimate for $i=1,2$,
 \begin{equation}\label{eq:ehGinKi}
   \arraycolsep=1.4pt\def\arraystretch{1.6}
     \begin{array}{lcl}
     |(e_h^G,\div\v_h)_{K_i}| &=& |(e_h^G-\m_i,\div\v_h)_{K_i}|\le\|e_h^G-\m_i\|_{0,K_i} |\v_h|_{1,K_i}\le
                                    \Cs h_i |e_h^G|_{1,K_i} |\v_h|_{1,K_i} \\
     &\le&
     \Cs |E|(|\u-\u_h|_{1,K_i} + \|p-\Pi_hp\|_{0,K_i}).  
   \end{array}
 \end{equation}
 To the last, by \eqref{cond:viK12},\eqref{set:vhbxi} and  Lemma \ref{lem:quadrature2},
 we have
   \[ (\st_{E_1\V},\div\v_h)_{K_1} = \frac{|E|}{200}\bxi^{\perp}\cdot{ \overrightarrow{\V\V_1}}^{\perp},\quad
     (\st_{E_2\V},\div\v_h)_{K_2} = -\frac{|E|}{200}\bxi^{\perp}\cdot{ \overrightarrow{\V\V_2}}^{\perp}.\]
   It implies that
   \begin{equation}\label{eq:alp12bxi}
     (\alp_1\st_{E_1\V},\div\v_h)_{K_1} + (\alp_2\st_{E_2\V},\div\v_h)_{K_2}
     =\frac{|E|}{200}(\alp_1  \overrightarrow{\V\V_1}
     -\alp_2  \overrightarrow{\V\V_2})\cdot\bxi.
     \end{equation}
We combine \eqref{eq:ehdivalp12} - \eqref{eq:alp12bxi} to get \eqref{eq:ciEi}.
 \end{proof}
\begin{figure}[t]
\hspace{4.5cm}
\includegraphics[width=0.4\textwidth]{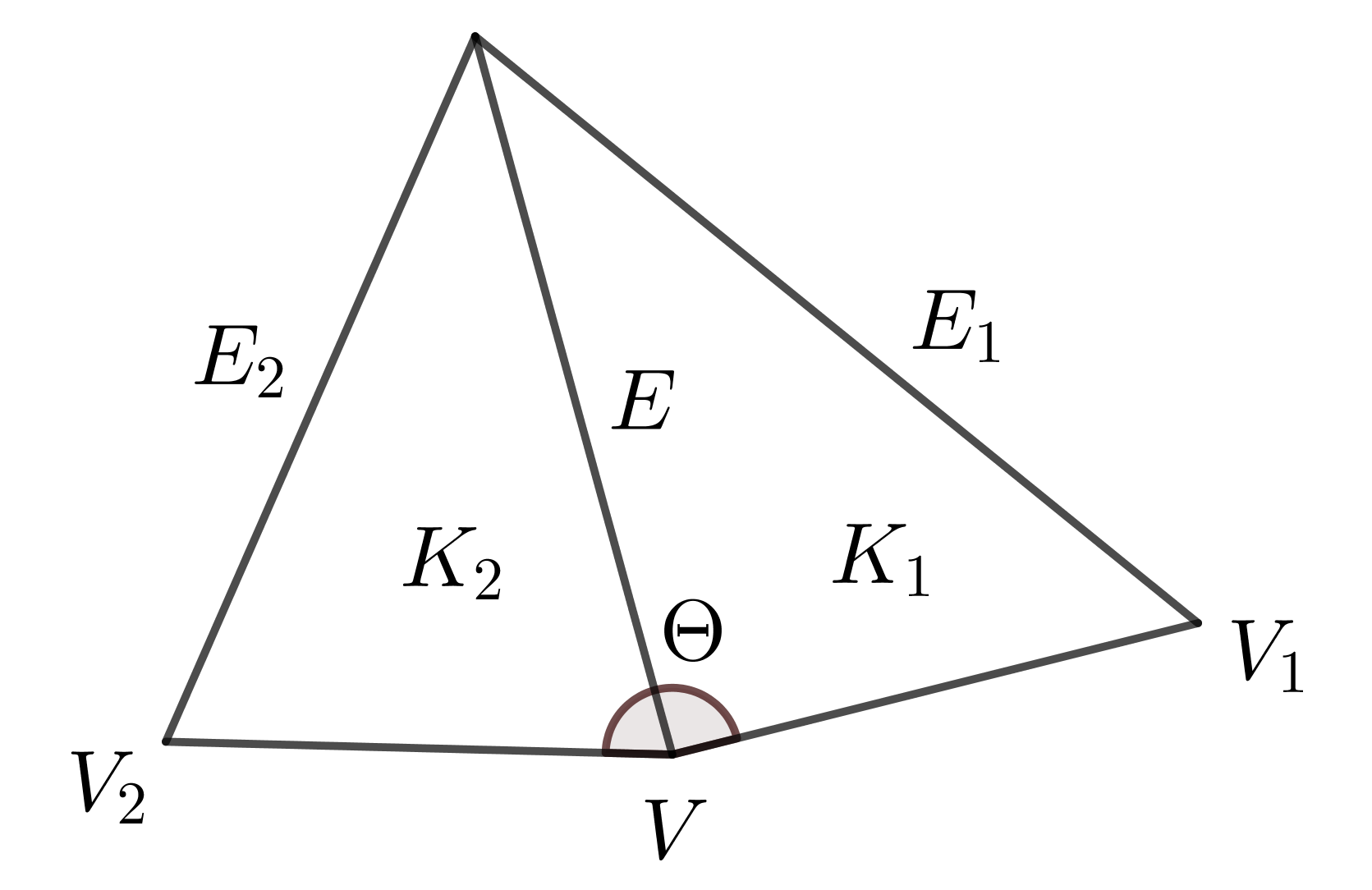}
\caption{Two back-to-back triangles $K_1, K_2$ sharing a vertex $\V$ }
\label{fig:back2back}
\end{figure}
We will choose a suitable $\bxi$ in \eqref{eq:ciEi}
to get some inequalities resulted in the following two lemmas.
They are useful in estimating the sting error $e_h^{\V}$
and postprocessing to remove the spurious error $e_h^{\V}$ for quasi singular vertices $\V$.

\begin{lemma}\label{lem:c1sin}
  Under the same assumption with Lemma \ref{lem:back2back},
 let $\Theta$ be the angle between $ \overrightarrow{\V\V_1}$
and  $ \overrightarrow{\V\V_2}$ as in Figure \ref{fig:back2back}. 
Then,
\begin{equation}\label{eq:alpsinth}
  |\alp_i \sin\Theta|\pskip |\overline{\V\V_i}| \le
     \Cs (|\u-\u_h|_{1,K_1\cup K_2} + \|p-\Pi_hp\|_{0,K_1\cup K_2}),\quad i=1,2.
   \end{equation}
 \end{lemma}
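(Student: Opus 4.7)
The plan is to deduce the two bounds directly from Lemma \ref{lem:back2back} by choosing the free unit vector $\bxi$ so that one of the two terms on the left-hand side of \eqref{eq:ciEi} is annihilated.

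First I would handle the case $i=1$. Take $\bxi$ to be a unit vector perpendicular to $\overrightarrow{\V\V_2}$, so that $\overrightarrow{\V\V_2}\cdot\bxi=0$. Then \eqref{eq:ciEi} collapses to
\[ |\alp_1\, \overrightarrow{\V\V_1}\cdot\bxi|\ \le\ \Cs(|\u-\u_h|_{1,K_1\cup K_2}+\|p-\Pi_hp\|_{0,K_1\cup K_2}).\]
Since $\bxi$ is a unit vector orthogonal to $\overrightarrow{\V\V_2}$ and $\Theta$ is the angle between $\overrightarrow{\V\V_1}$ and $\overrightarrow{\V\V_2}$, elementary trigonometry gives $|\overrightarrow{\V\V_1}\cdot\bxi|=|\overline{\V\V_1}|\,|\sin\Theta|$, which yields \eqref{eq:alpsinth} for $i=1$.

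For the case $i=2$, I would repeat the argument with $\bxi$ chosen perpendicular to $\overrightarrow{\V\V_1}$ instead, so that $\overrightarrow{\V\V_1}\cdot\bxi=0$ and $|\overrightarrow{\V\V_2}\cdot\bxi|=|\overline{\V\V_2}|\,|\sin\Theta|$. Applying \eqref{eq:ciEi} again gives the stated bound.

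There is no real obstacle here: the whole substance of the lemma is packed into Lemma \ref{lem:back2back}, and this proof is just a geometric projection argument. The only point worth a line of comment is that $\bxi$ is permitted to be any unit vector in \eqref{eq:ciEi}, so the two choices above are legitimate, and that the symmetric roles of $\V_1,\V_2$ in the back-to-back configuration justify reusing the same constant $\Cs$ in both cases.
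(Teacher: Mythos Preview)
Your proof is correct and essentially identical to the paper's: both choose $\bxi$ orthogonal to $\overrightarrow{\V\V_2}$ (respectively $\overrightarrow{\V\V_1}$) so that one term in \eqref{eq:ciEi} drops out, and then use $|\overrightarrow{\V\V_1}\cdot\bxi|=|\overline{\V\V_1}|\,|\sin\Theta|$ to conclude. The paper even spells out the trigonometric step as $|\cos(\Theta\pm\pi/2)|=|\sin\Theta|$, matching your ``elementary trigonometry'' remark.
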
   
 \begin{proof}
   Choose a unit vector $\bxi$ such that
   \begin{equation}\label{eq:choobxith}
     \Evec{\V}{\V_2}   \cdot\bxi=0.
   \end{equation}
   Then
   \begin{equation}\label{eq:cond:choobxith}
     | \Evec{\V}{\V_1}\cdot\bxi|= |\overline{\V\V_1}| \pskip |\cos(\Theta\pm\pi/2)| 
     = |\overline{\V\V_1}|\pskip |\sin\Theta|. 
   \end{equation}
   From \eqref{eq:ciEi}, \eqref{eq:choobxith}, \eqref{eq:cond:choobxith},
   we have \eqref{eq:alpsinth} for $i=1$.
   The same argument is repeated for $i=2$.
 \end{proof}

 \begin{lemma}\label{lem:c1pc2}
   Under the same assumption with Lemma \ref{lem:back2back}, we have
 \begin{equation*}
    \Big|\alp_1 |\overline{\V\V_1}| + \alp_2|\overline{\V\V_2}|\Big|
    \le \Cs(|\u-\u_h|_{1,K_1\cup K_2} + \|p-\Pi_hp\|_{0,K_1\cup K_2}).
  \end{equation*}
 \end{lemma}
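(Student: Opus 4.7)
The plan is to apply Lemma \ref{lem:back2back} with the specific unit vector $\bxi = \overrightarrow{\V\V_1}/|\overline{\V\V_1}|$. For this choice, $\overrightarrow{\V\V_1}\cdot\bxi = |\overline{\V\V_1}|$ and $\overrightarrow{\V\V_2}\cdot\bxi = |\overline{\V\V_2}|\cos\Theta$, so \eqref{eq:ciEi} immediately yields
\begin{equation*}
\bigl|\alp_1|\overline{\V\V_1}| - \alp_2|\overline{\V\V_2}|\cos\Theta\bigr| \le \Cs\bigl(|\u-\u_h|_{1,K_1\cup K_2} + \|p-\Pi_h p\|_{0,K_1\cup K_2}\bigr).
\end{equation*}
The task is then to convert the ``$-\cos\Theta$'' appearing here into the ``$+1$'' demanded by the lemma.

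To that end I would use the identity
\begin{equation*}
\alp_1|\overline{\V\V_1}| + \alp_2|\overline{\V\V_2}| = \bigl(\alp_1|\overline{\V\V_1}| - \alp_2|\overline{\V\V_2}|\cos\Theta\bigr) + \alp_2|\overline{\V\V_2}|(1+\cos\Theta),
\end{equation*}
and bound the second summand with the help of Lemma \ref{lem:c1sin}. The half-angle identities give $|1+\cos\Theta|/|\sin\Theta| = |\cos(\Theta/2)|/|\sin(\Theta/2)|$. Shape regularity \eqref{def:vts2} forces each of the two angles at $\V$ inside $K_1$ and $K_2$ to be at least $\vts$, so the angle $\Theta$ between $\overrightarrow{\V\V_1}$ and $\overrightarrow{\V\V_2}$ satisfies $\Theta/2\in[\vts,\pi-2\vts]$ and hence $|\sin(\Theta/2)|\ge \sin\vts$. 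Consequently $|1+\cos\Theta|\le \Cs\,|\sin\Theta|$, and Lemma \ref{lem:c1sin} applied with $i=2$ gives
\begin{equation*}
\bigl|\alp_2|\overline{\V\V_2}|(1+\cos\Theta)\bigr| \le \Cs\,|\alp_2\sin\Theta|\cdot|\overline{\V\V_2}| \le \Cs\bigl(|\u-\u_h|_{1,K_1\cup K_2} + \|p-\Pi_h p\|_{0,K_1\cup K_2}\bigr).
\end{equation*}
Adding the two estimates via the triangle inequality closes the argument.

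The main obstacle is the sign reconciliation: the back-to-back configuration intrinsically pairs $\overrightarrow{\V\V_1}$ with $-\overrightarrow{\V\V_2}$ (these vectors point into opposite half-planes across the shared edge $E$), so Lemma \ref{lem:back2back} on its own controls only signed \emph{differences} of projections, not sums of scalar lengths. The conversion succeeds because the residue $1+\cos\Theta$ vanishes at the same rate as $|\sin\Theta|$ as $\Theta\to\pi$, and remains bounded by a multiple of $|\sin\Theta|$ throughout the admissible range of $\Theta$ thanks to shape regularity; this is precisely the quantity Lemma \ref{lem:c1sin} is tailored to control, so the correction term is absorbable. Without the lower bound $\Theta/2\ge\vts$, the ratio $|\cot(\Theta/2)|$ would blow up and this trick would fail.
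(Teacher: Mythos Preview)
Your argument is correct, but the paper proceeds differently. Instead of taking $\bxi$ along $\overrightarrow{\V\V_1}$ and then repairing the resulting $-\cos\Theta$ with Lemma~\ref{lem:c1sin}, the paper chooses $\bxi$ to bisect the angle between $\overrightarrow{\V\V_1}$ and $-\overrightarrow{\V\V_2}$: if $\dh\in[0,\pi]$ is that angle, then $\overrightarrow{\V\V_1}\cdot\bxi=|\overline{\V\V_1}|\cos(\dh/2)$ and $-\overrightarrow{\V\V_2}\cdot\bxi=|\overline{\V\V_2}|\cos(\dh/2)$, so a single application of Lemma~\ref{lem:back2back} already gives the desired sum, multiplied by $\cos(\dh/2)$. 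Shape regularity forces $\dh\le\pi-2\vts$, hence $\cos(\dh/2)\ge\sin\vts$, and one divides through. The paper's route is thus self-contained (it does not call Lemma~\ref{lem:c1sin}) and slightly shorter; your route is equally valid and has the mild advantage of reusing a lemma already in hand rather than introducing a new geometric choice of $\bxi$. The underlying shape-regularity ingredient is the same in both: a lower bound on $\sin(\Theta/2)$ in yours, equivalently on $\cos(\dh/2)$ in the paper's, with $\Theta+\dh=\pi$ or $\Theta-\dh=\pi$.
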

 \begin{proof}
   Let  $\Theta $ be the sum of two angles of $\V$ in $K_1, K_2$
   as in Figure \ref{fig:back2back} and $0\le \dh \le \pi$ the angle
   between $\Evec{\V}{\V_1}$ and $-\Evec{\V}{\V_2}$.
  We note that, if $\Theta\le\pi$, then $\Theta+\dh=\pi$, otherwise, $\Theta-\dh=\pi$. 

  By shape regularity of $\Th$ in \eqref{def:vts}, \eqref{def:vts2},
  $\Theta$ is bounded as
  \begin{equation}\label{eq:rangeTheta}
    2\vts \le \Theta \le 2\pi -4 \vts.
  \end{equation}
  Thus, in both cases of $\Theta\le\pi$ or $\Theta>\pi$,   we have
\[ 0\le \dh \le \pi -2\vts.\]
It means
\begin{equation}\label{eq:cosdh}
  \cos(\dh/2)=\sqrt{(1+\cos\dh)/2} \ge \sqrt{(1-\cos 2\vts)/2}
  =\sin \vts > 0.
\end{equation}

Choose a unit vector $\bxi$ so that $\bxi$ forms the same acute angle $\dh/2$ with  
    $\Evec{\V}{\V_1}$ and $-\Evec{\V}{\V_2}$.
    Then, from \eqref{eq:ciEi}, \eqref{eq:cosdh}, we have
    \[ \left|\alp_1 |\overline{\V\V_1}| +  \alp_2|\overline{\V\V_2}|\right|
      \le \Cs (\sin\vts)^{-1}(|\u-\u_h|_{1,K_1\cup K_2} + \|p-\Pi_hp\|_{0,K_1\cup K_2}).\]
 \end{proof}

\subsection{Stable components and spurious error  in  $e_h$} 
For each vertex $\V$,
define the basin $\B(\V)$ of $\V$ as
the union of all triangles in $\Th$
sharing their common vertex $\V$.
For the convenience, we extend the notation as
\[ \B(\V_1, \V_2,\cdots,\V_m)=\B(\V_1)\cup \B(\V_2) \cup \cdots\cup \B(\V_m).\]

The sting error $e_h^{\V}$ has a similar property as $e_h$ in \eqref{eq:ehdivest1}
in the following lemma.
 \begin{lemma}\label{lem:ehV}
   Let $\V$ be a vertex and $\v_h \in X_h$. We have
   \begin{equation}\label{eq:ehvdivvh}
     (e_h^{\V}, \div \v_h) \le
     \Cs(|\u-\u_h|_{1,\B(\V)} + \|p-\Pi_hp\|_{0,\B(\V)})|\v_h|_{1,\B(\V)}.
   \end{equation}
 \end{lemma}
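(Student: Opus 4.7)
The plan is to expand $(e_h^{\V},\div\v_h)$ explicitly as a sum indexed by the edges at $\V$, so that it regroups into the jump quantities already controlled by Lemma \ref{lem:back2back}. Enumerate the triangles of $\B(\V)$ counterclockwise as $K_1,\dots,K_N$, with $K_j$ having vertices $\V,\V_{j-1},\V_j$ and opposite edge $E_j$ of $\V$; let $e_j=\overline{\V\V_j}$ be the edges at $\V$, with unit tangents $\t_{e_j}$. Write $e_h^{\V}|_{K_j}=\alp_j\,\st_{E_j\V}$. Applying Lemma \ref{lem:quadrature2} in each $K_j$ and setting $d_j:=\tfrac{\p\v_h}{\p\t_{e_j}}(\V)$ (well defined across triangles by the $C^0$-continuity of $\v_h$), then multiplying by $\alp_j$, summing over $j$, shifting the index in one of the resulting sums, and using $|e_k|\t_{e_k}^{\perp}=\Evec{\V}{\V_k}^{\perp}$, the expression collapses to
\[ (e_h^{\V},\div\v_h)=\frac{1}{200}\sum_{j}|e_j|\,d_j\cdot J_j,\qquad J_j:=\alp_j\Evec{\V}{\V_{j-1}}^{\perp}-\alp_{j+1}\Evec{\V}{\V_{j+1}}^{\perp}, \]
where the sum runs over edges of $\Th$ at $\V$ shared by two triangles of $\B(\V)$. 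Boundary edges at $\V$ (if $\V\in\p\O$) contribute nothing, because $\v_h\in[H_0^1(\O)]^2$ vanishes on $\p\O$, forcing $d_j=0$ there.

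Each $J_j$ is the $90^\circ$-rotation of $\alp_j\Evec{\V}{\V_{j-1}}-\alp_{j+1}\Evec{\V}{\V_{j+1}}$, which is precisely the vector controlled by Lemma \ref{lem:back2back} applied to the back-to-back pair $K_j,K_{j+1}$ sharing $e_j$ (their opposite vertices of $e_j$ are exactly $\V_{j-1}$ and $\V_{j+1}$). Taking a supremum over unit $\bxi$ in that lemma yields
\[ |J_j|\le\Cs\big(|\u-\u_h|_{1,K_j\cup K_{j+1}}+\|p-\Pi_hp\|_{0,K_j\cup K_{j+1}}\big). \]
The other factor is estimated by the standard inverse inequality applied to the $P^3$-valued $\nabla\v_h|_{K_j}$: $|d_j|\le\Cs h^{-1}|\v_h|_{1,K_j}$, whence $|e_j|\,|d_j|\le\Cs|\v_h|_{1,K_j}$.

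Cauchy-Schwarz on $\sum_j|e_j|\,|d_j|\,|J_j|$, together with the finite overlap of the patches $K_j\cup K_{j+1}$ and the shape-regularity bound on $N$, then gives \eqref{eq:ehvdivvh}. The main obstacle is the bookkeeping of the first step: the per-triangle quadrature formulas must be reorganised so that the pressure coefficients at neighbouring triangles combine into the correct jump quantity $J_j$ on each shared edge at $\V$. Once that rearrangement is carried out, the rest is a routine combination of Lemma \ref{lem:back2back}, an inverse estimate, and Cauchy-Schwarz.
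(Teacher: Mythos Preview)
Your proposal is correct and follows essentially the same approach as the paper's own proof. Both expand $(e_h^{\V},\div\v_h)$ triangle by triangle via Lemma~\ref{lem:quadrature2}, regroup the resulting terms by edges at $\V$ to obtain the identity you write (which is the paper's equation \eqref{eq:consehV}), bound the edge-jump vectors by Lemma~\ref{lem:back2back}, and control $|e_j|\,|d_j|$ by the inverse estimate (the paper's \eqref{eq:elldvh}); your explicit treatment of boundary edges via $d_j=0$ and the final Cauchy--Schwarz step are details the paper leaves implicit.
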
 
 \begin{proof}
\begin{figure}[ht]
\hspace{3mm}
\subfloat[interior vertex $\V$]{
\includegraphics[width=0.43\textwidth]{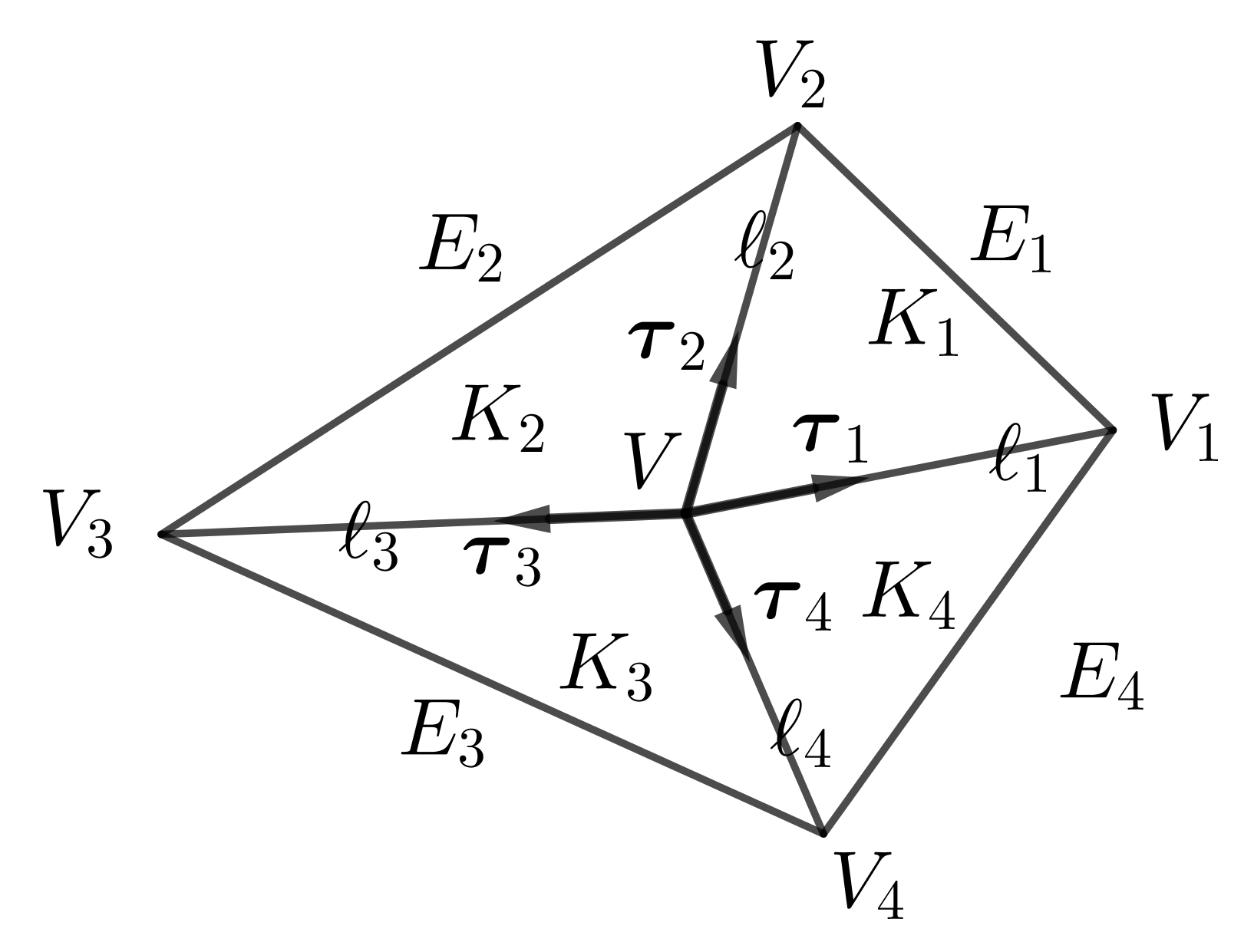}
}\qquad
\subfloat[boundary vertex $\V$]{\raisebox{5ex}
{\includegraphics[width=0.43\textwidth]{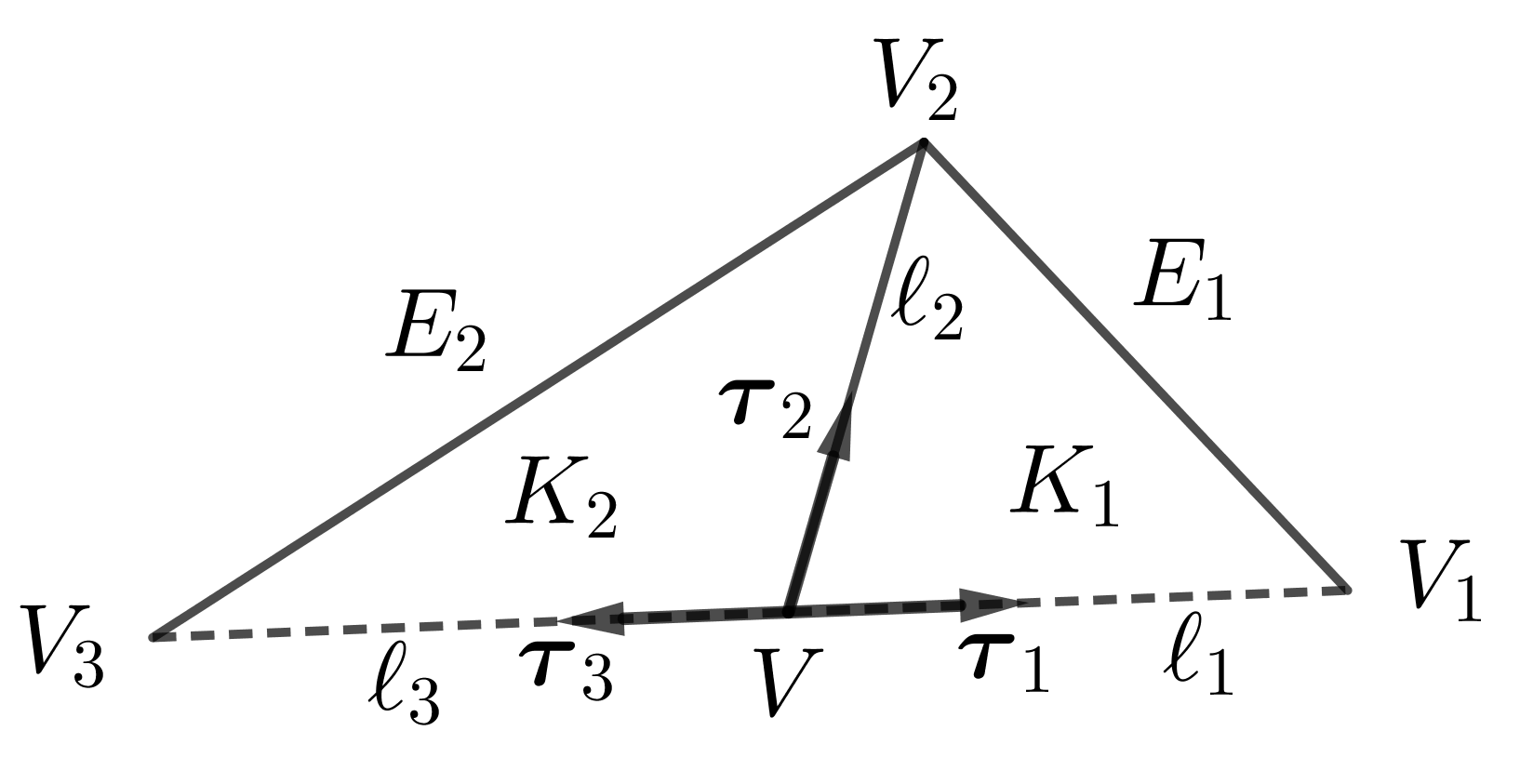} 
}}\caption{Basin $\B(\V)$ of a vertex $\V$ (dashed edges belong to $\p\O$.)}
\label{fig:basinV}
\end{figure}
Let $K_1, K_2,\cdots,K_m$ be $m$ 
triangles in $\Th$ counterclockwisely numbered
such that
\[ \B(\V)=K_1 \cup K_2\cup\cdots\cup K_m,\]
and $\V_i\in K_i, i=1,2,\cdots,m$
be consecutive 
vertices on $\p\B(\V)$ as in Figure \ref{fig:basinV}.
In case of $\V\in \p\O$, there exists one more vertex $\V_{m+1}\in K_m$ 
on $\p\B(\V)$.   If $m=1$, $\V$ belongs to $\p\O$ and 
as in subsection \ref{sec:example_spu}, 
   \[  (e_h^{\V}, \div \v_h) =0.\]

Let $m\ge2$ and $\ell_i=|\overline{\V\V_i}|$ and
$ \t_i = \Evec{\V}{\V_i}/| \Evec{\V}{\V_i}|,\ i=1,2,\cdots,m$.
Denoting by $E_i$, the opposite edge of $\V$ in $K_i, i=1,2,\cdots,m$,
 there exist
 $m$ constants $\alp_1,\alp_2,\cdots,\alp_m$ which represent 
\begin{equation}\label{rep:ehValp123}
 e_h^{\V} = \alp_1 \st_{E_1\V} +  \alp_2 \st_{E_2\V} + \cdots +  \alp_m \st_{E_m\V}.
\end{equation}
Then, from the quadrature rule in Lemma \ref{lem:quadrature2}, we have
\begin{equation}\label{eq:consehV}
  (e_h^{\V}, \div \v_h) = \sum_{i=1}^m
  \frac{\ell_i}{200}\frac{\partial \v_h}{\partial\t_i}(\V)
   \cdot\left(\alp_{i-1} {\Evec{\V}{\V_{i-1}}}^{\perp} 
     - \alp_{i} {\Evec{\V}{\V_{i+1}}}^{\perp}\right),
 \end{equation}  
where all indexes are modulo $m$, if $\V$ is an interior vertex.

We note that
 \begin{equation}\label{eq:elldvh} \ell_i\Big|\frac{\partial \v_h}{\partial\t_i}(\V)\Big| \le \Cs |\v_h|_{1,K_i},
\quad i=1,2,\cdots,m.
\end{equation}
Thus, the representation in \eqref{eq:consehV}
establishes \eqref{eq:ehvdivvh} with \eqref{eq:elldvh} and Lemma \ref{lem:back2back}.
 \end{proof}
 If a vertex $\V$ is not quasi singular, then we estimate $\nabla e_h^{\V}$
 in the following lemma.
 \begin{lemma}\label{lem:gradehV}
   Let $\V\notin \SS_h$ be a regular vertex and
   $h$ the diameter  of the basin $\B(\V)$. Then we have
 \begin{equation}\label{eq:lemnablaehV} h\| \nabla e_h^{\V}\|_{0,\B(\V)} =
     \Cs (|\u-\u_h|_{1,\B(\V)} + \|p-\Pi_hp\|_{0,\B(\V)}).   
   \end{equation}
 \end{lemma}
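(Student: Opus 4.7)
The plan is to expand $e_h^{\V}=\sum_{i=1}^m \alp_i \st_{E_i\V}$ as in \eqref{rep:ehValp123}, and then reduce the gradient estimate to a coefficient estimate $h|\alp_i|\le\Cs R$ for every $i$, where
\[ R\ :=\ |\u-\u_h|_{1,\B(\V)}+\|p-\Pi_h p\|_{0,\B(\V)}.\]
A direct computation from \eqref{def:zEV} gives $\nabla\st_{E_i\V}(\x)=\f'(\lam(\x)/H)(-\n)/H$; shape regularity ($H\sim h$, $|K_i|\lesssim h^2$) then yields $\|\nabla\st_{E_i\V}\|_{0,K_i}\le\Cs$. Since the number $m$ of triangles meeting $\V$ is bounded in terms of $\sigma$, the lemma will follow once $h|\alp_i|\le\Cs R$ is established for every $i$.

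To produce a first such bound I use the regularity of $\V$. Since $\V\notin\SS_h$, \eqref{def:sing} provides some $\Theta_{i_0}\in\Upsilon(\V)$ with $|\Theta_{i_0}-\pi|\ge\vts$. The range $\Theta_{i_0}\in[2\vts,\,2\pi-4\vts]$ forced by \eqref{def:vts2} applied to a sum of two back-to-back triangle angles, combined with $\vts\le\pi/6$, gives after a short monotonicity check $|\sin\Theta_{i_0}|\ge\sin\vts$, a positive constant depending only on $\sigma$. Applying Lemma \ref{lem:c1sin} to the back-to-back pair $(K_{i_0},K_{i_0+1})$ whose angle sum at $\V$ is $\Theta_{i_0}$, and using $|\overline{\V\V_j}|\ge c h$ from shape regularity, yields the anchor bounds
\[ h|\alp_{i_0}|\le\Cs R,\qquad h|\alp_{i_0+1}|\le\Cs R.\]

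The remaining coefficients are reached by iterating Lemma \ref{lem:c1pc2}. For each neighboring back-to-back pair that shares a triangle whose coefficient is already controlled, that lemma bounds a linear combination of two products $\alp_j \ell$; the triangle inequality together with $\ell\sim h$ isolates the new $h|\alp|$ at the cost of a bounded factor. If $\V$ is interior, Lemma \ref{lem:insing4} gives $m=4$ and at most three propagation steps close the cyclic chain; if $\V\in\p\O$, the chain is linear and is traversed in both directions from $i_0$, with the number of steps bounded by shape regularity. Since each step enlarges the constant only by a bounded factor, it stays of type $\Cs$, and summing over $i$ gives \eqref{eq:lemnablaehV}.

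The main obstacle is the second step: the hypothesis $|\Theta_{i_0}-\pi|\ge\vts$ alone does not keep $\sin\Theta_{i_0}$ away from zero (it permits $\Theta_{i_0}$ near $0$ or $2\pi$), and one must combine it with the admissible range of sums of \emph{individual} triangle angles from \eqref{def:vts2} before Lemma \ref{lem:c1sin} produces a usable anchor. Once that lower bound on $|\sin\Theta_{i_0}|$ is in hand, Lemmas \ref{lem:c1sin} and \ref{lem:c1pc2} do the heavy lifting and the rest is index bookkeeping.
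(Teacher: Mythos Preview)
Your argument is correct and follows exactly the paper's proof: anchor one coefficient via Lemma~\ref{lem:c1sin} using a back-to-back pair with $|\Theta-\pi|\ge\vts$ (so that $|\sin\Theta|\ge\sin\vts$ by \eqref{eq:rangeTheta}), then propagate to the remaining $\alp_i$ with Lemma~\ref{lem:c1pc2}, and conclude from $\|\nabla\st_{E_i\V}\|_{0,K_i}\le\Cs$. One slip: Lemma~\ref{lem:insing4} applies only to \emph{quasi singular} interior vertices, so for a regular interior $\V$ you cannot conclude $m=4$; but this is harmless, since you already noted $m$ is bounded by shape regularity and the propagation works for any such $m$.
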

 \begin{proof}
Under the same notations in the proof of Lemma \ref{lem:ehV},
from the definition of $\SS_h$ in \eqref{def:sing}, 
there exist two back-to-back triangles $K_j, K_{j+1}$ such that
the sum $\Theta$ of their angles of $\V$ satisfies
\begin{equation}\label{eq:dThetapi} | \Theta-\pi | \ge \vts.    
\end{equation}

Then, from \eqref{eq:rangeTheta}, \eqref{eq:dThetapi},
we have 
$|\sin\vts|\le |\sin\Theta| $.
Thus, by  Lemma \ref{lem:c1sin},
$|\alp_j|$ in  \eqref{rep:ehValp123}  is bounded by
\[  h^{-1} \Cs (|\u-\u_h|_{1,\B(\V)} + \|p-\Pi_hp\|_{0,\B(\V)}),\]
and sequentially so are all $|\alp_i|, i=1,2,\cdots,m$ in \eqref{rep:ehValp123} 
by Lemma \ref{lem:c1pc2}.
It implies \eqref{eq:lemnablaehV}, since
\[ \|\nabla \st_{E_i\V} \|_{0,K_i} \le \Cs,\quad i=1,2,\cdots,m.\]
 \end{proof}
Split the sting error $e_h^S$ into two components by regular and quasi singular vertices: 
\begin{equation}\label{def:ehs}
  e_h^S = e_h^{SR}  + e_h^{SS},
\end{equation}
where
\[  e_h^{SR} = \sum_{\V\notin\SS_h} e_h^{\V},\quad e_h^{SS} = \sum_{\V\in\SS_h} e_h^{\V}.\]
Then the components $e_h^G ,e_h^{SR}$ in $e_h=e_h^G +e_h^{SR}+ e_h^{SS}$ is stable
as in the following theorem.
\begin{theorem}\label{thm:ehGSR}
  Let $\m$ be the mean of $e_h^G + e_h^{SR}$ over $\O$. Then,
if $\u\in[H^5(\O)]^2, p\in H^4(\O)$, we have  
  \begin{equation}\label{eq:thmehGSR}
    \|e_h^G + e_h^{SR}  - \m\|_{0} \le Ch^4 (|\u|_5 + |p|_4).
\end{equation}
\end{theorem}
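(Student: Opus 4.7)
The plan is to split $w := e_h^G + e_h^{SR}$ as $(w - \widetilde w) + (\widetilde w - \m)$, where $\widetilde w$ is the piecewise constant whose value on each $K \in \Th$ is the mean of $w$ over $K$. The first piece will be controlled by a triangle-wise Poincar\'e inequality fed from Lemmas~\ref{lem:normehs} and \ref{lem:gradehV}; the second by a duality argument using the continuous Stokes inf-sup and a Fortin-type interpolant into $X_h$ that preserves edge-normal fluxes (equivalently, piecewise-constant divergences). Both pieces should inherit the same $Ch^4(|\u|_5+|p|_4)$ rate without any $\Theta_{\min}$ dependence.

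For the gradient side, Lemma~\ref{lem:normehs} controls $h\|\nabla e_h^G\|_{0,K}$ on every $K$, while Lemma~\ref{lem:gradehV} gives the analogous bound for $e_h^{\V}$ on $\B(\V)$ whenever $\V \notin \SS_h$; summing over triangles and regular vertices (with bounded multiplicity coming from shape regularity) yields $h\|\nabla w\|_{0,h} \le C(|\u - \u_h|_1 + \|p - \Pi_h p\|_0)$. Plugging in Theorem~\ref{thm:veloerror} and the standard interpolation bound $\|p - \Pi_h p\|_0 \le Ch^4|p|_4$ gives $\|\nabla w\|_{0,h} \le Ch^3(|\u|_5+|p|_4)$. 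Triangle-wise Poincar\'e then produces
\[
  \|w - \widetilde w\|_0 \le Ch\|\nabla w\|_{0,h} \le Ch^4(|\u|_5+|p|_4).
\]

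For the piecewise-constant piece, $\widetilde w - \m$ lies in $L_0^2(\O)$, so the continuous Stokes inf-sup delivers $\v \in [H_0^1(\O)]^2$ with $\div \v = \widetilde w - \m$ and $|\v|_1 \le C\|\widetilde w - \m\|_0$. I then invoke a Fortin-type interpolant $\Pi_h^F \v \in X_h$ satisfying $\int_E (\v - \Pi_h^F \v)\cdot \n\,ds = 0$ on every edge $E$ and $|\Pi_h^F \v|_1 \le C|\v|_1$; since this only requires matching one scalar per edge, such an operator exists for the quartic space $X_h$ by a standard edge-bubble construction, with a stability constant depending solely on $\sigma$. Then $\|\widetilde w - \m\|_0^2 = (\widetilde w - \m,\div\Pi_h^F\v)$, which I rewrite as $(w,\div\Pi_h^F\v) + (\widetilde w - w,\div\Pi_h^F\v)$, using that $\m$ contributes nothing against $\div\Pi_h^F\v \in L_0^2(\O)$. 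The first summand equals $(e_h,\div\Pi_h^F\v) - (e_h^{SS},\div\Pi_h^F\v)$: the former is bounded by $Ch^4(|\u|_5+|p|_4)|\Pi_h^F\v|_1$ via \eqref{eq:ehdivest1}, while $(e_h^{SS},\div\Pi_h^F\v) = \sum_{\V\in\SS_h}(e_h^{\V},\div\Pi_h^F\v)$ is handled by a Cauchy--Schwarz sum of Lemma~\ref{lem:ehV}, the pairwise disjointness of the basins $\B(\V)$ for $\V \in \SS_h$ being guaranteed by Lemma~\ref{lem:isolvtx}. The second summand is controlled by $\|w-\widetilde w\|_0|\Pi_h^F\v|_1$ from the previous step. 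Combining with $|\Pi_h^F\v|_1 \le C\|\widetilde w-\m\|_0$ and dividing gives $\|\widetilde w - \m\|_0 \le Ch^4(|\u|_5+|p|_4)$, and a triangle inequality closes the theorem.

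The most delicate ingredient is the Fortin-type operator in the last step: one needs an interpolant into $X_h$ whose edge-normal-flux preservation and $H^1$ stability are uniform in $\Theta_{\min}$. For the quartic Scott--Vogelius velocity this is well within reach via edge bubbles (indeed Lemma~\ref{lem:ext_test_ftn} already supplies the building block), but the construction should be invoked or cited explicitly so that the $\Theta_{\min}$-independence is transparent. A secondary point demanding care is the summation over singular vertices, which remains clean only because Lemma~\ref{lem:isolvtx} forbids two singular vertices from sharing a triangle.
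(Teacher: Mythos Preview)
Your argument is correct and follows the same architecture as the paper's proof: both split off the piecewise-constant part of $e_h^G+e_h^{SR}$, bound the oscillation via Lemmas~\ref{lem:normehs} and~\ref{lem:gradehV}, and control the constant part by testing against a $\v_h\in X_h$ whose divergence hits it, then decomposing $(e_h^G+e_h^{SR},\div\v_h)=(e_h,\div\v_h)-(e_h^{SS},\div\v_h)$ and invoking \eqref{eq:ehdivest1} and Lemma~\ref{lem:ehV}. The one economy the paper makes is to obtain that $\v_h$ directly from the $P^2$--$P^0$ stability of Bernardi--Raugel \cite{Bernardi1985}, which already lives inside $X_h$ and carries a $\Theta_{\min}$-independent constant; this is exactly the content of your Fortin-operator step, but cited rather than rebuilt, so you can drop the edge-bubble construction entirely.

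One small imprecision: Lemma~\ref{lem:isolvtx} forbids only \emph{interior} edges between quasi-singular vertices, so the basins $\B(\V)$ for $\V\in\SS_h$ are not pairwise disjoint in general (boundary quasi-singular chains, as in Figure~\ref{fig:singbdy}, share triangles). This does not damage your estimate, since bounded overlap---at most three vertices per triangle---is all the Cauchy--Schwarz summation of Lemma~\ref{lem:ehV} requires; just replace ``disjoint'' by ``finitely overlapping''.
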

\begin{proof}
  Denote $e_h^G+e_h^{SR}-\m$ by $e_h^{GR\m}$.
  Let $\Pi_he_h^{GR\m} $ be the projection of $e_h^{GR\m}\in  L_0^2(\O)$
  into $\mathcal{P}_{0,h}(\O)$. 
  Then,
  from the stability of $P^2-P^0$ \cite{Bernardi1985}, 
  there exists $\v_h\in X_h$ such that
  \begin{equation}\label{eq:ehGalp}
    ( \Pi_h e_h^{GR\m} , e_h^{GR\m} -\div\v_h) =0,\quad
    |\v_h|_1 \le C \norm{e_h^{GR\m}}{0}.
  \end{equation}

  We note, by  Theorem \ref{thm:veloerror} and Lemma \ref{lem:normehs}, \ref{lem:gradehV},
  \begin{equation}\label{eq:diffehGR}
    \| e_h^{GR\m} - \Pi_h e_h^{GR\m} \|_0 \le C h^4(|\u|_5 + |p|_4).
  \end{equation}
 Then, Lemma  \ref{lem:ehV} helps us to estimate \eqref{eq:thmehGSR}
  with  \eqref{eq:ehdivest1}, \eqref{eq:ehGalp}, \eqref{eq:diffehGR}
  in the following expansion:
\begin{equation*} \label{eq:ehGRexpan}
\arraycolsep=1.4pt\def\arraystretch{1.8}
        \begin{array}{ccl}
  \|e_h^{GR\m}\|_{0}^2 &=&
                          (e_h^{GR\m},e_h^{GR\m}  -\div\v_h) + (e_h^{GR\m}, \div\v_h) \\
          &=&(e_h^{GR\m}-\Pi_h e_h^{GR\m}, e_h^{GR\m}  -\div\v_h) + (e_h^{GR\m}, \div\v_h) \\
&\le& Ch^4(|\u|_5 + |p|_4) \|e_h^{GR\m}\|_0 + (e_h^{GR\m}, \div\v_h) \\
&=&  Ch^4(|\u|_5 + |p|_4) \|e_h^{GR\m}\|_0 
    +(e_h,\div\v_h) - (e_h^{SS},\div\v_h) \\
                     &\le& Ch^4 (|\u|_5 + |p|_4)  (\|e_h^{GR\m}\|_{0} + |\v_h|_1) 
                           \le  Ch^4 (|\u|_5 + |p|_4)  \|e_h^{GR\m}\|_{0}.
        \end{array}
      \end{equation*}
\end{proof}

If $\Th$ has no quasi singular vertex, Theorem \ref{thm:ehGSR}
asserts that $p_h -p$ has an error decay of optimal order
as expected from the inf-sup condition in \eqref{cond:infsup}.

The presence of quasi singular vertices, however, the sting error $e_h^{SS}$
could appear as large as spoiling the discrete pressure $p_h$
as in Figure \ref{fig:pandph} in the last section.
In  the next section, we are going to postprocess $p_h$ to remove $e_h^{SS}$
which is called spurious error.

\section{Remove spurious error $e_h^{SS}$}\label{sec:remove}
We will postprocess $p_h$ to remove the undesired error $e_h^{\V}$
in the following order:
\begin{enumerate}
\item $e_h^{\V}$ for interior quasi singular vertices $\V$
  using the jump of $p_h$ at $\V$,
\item $e_h^{\V}$ for boundary quasi singular vertices $\V$ away from
  corners using the jump at $\V$,
\item $e_h^{\V}$ for boundary quasi singular corners $\V$
  using the jump at the opposite edge.
\end{enumerate}

Dividing quasi singular vertices by interior and boundary into
\begin{equation*}
  \SS_h^{i}=\{\V\in\SS_h\ |\ \V \notin \p\O\},\quad
    \SS_h^{b}=\{\V\in\SS_h\ |\ \V \in \p\O\},
  \end{equation*}
  we split the spurious error $e_h^{SS}$  into
 \begin{equation}\label{eq:splitss}   e_h^{SS} =e_h^{SSi} +e_h^{SSb},
 \end{equation}
 where  
  \[  e_h^{SSi} = \sum_{\V\in\SS_h^i} e_h^{\V},
    \quad e_h^{SSb} = \sum_{\V\in\SS_h^b} e_h^{\V}.\]
\subsection{Remove interior spurious  error $e_h^{SSi}$}
Let $\V\in \SS_h^i$ be an  interior quasi singular vertex,
then the basin $\B(\V)$ of $\V$ consists of 4 triangles
$K_1,K_2,K_3,K_4$ by Lemma \ref{lem:insing4}.
In this subsection, we adopt the notations in Figure \ref{fig:basinV}-(a).
Note that  4 unknown constants $\alp_1, \alp_2,\alp_3,\alp_4$ represent $e_h^{\V}$ as
\begin{equation}\label{post:in:ehValp1234}
  e_h^{\V} = \alp_1 \st_{E_1\V} +  \alp_2 \st_{E_2\V} + \alp_3 \st_{E_3\V}
  +\alp_4 \st_{E_4\V}.
\end{equation}

By Lemma \ref{lem:c1pc2}, $\alp_1, \alp_2$ satisfy
\begin{equation}\label{post:inalp12ineq}
  |\alp_1 \ell_1 + \alp_2 \ell_3 | \le
  \Cs(|\u-\u_h|_{1,K_1\cup K_2} + \|p-\Pi_hp\|_{0,K_1\cup K_2}).
\end{equation}
Note that  $e_h^{SS}\big|_{\B(\V)} = e_h^{\V}$, since $\V$ is the only
quasi singular vertex in $\B(\V)$ by  Lemma \ref{lem:isolvtx}.
Thus, from \eqref{eq:spliteh}, \eqref{def:ehs}, \eqref{post:in:ehValp1234},
we have 
\begin{equation}\label{post:ehK1K2}
 e_h\Big|_{K_1}  
 = (e_h^G + e_h^{SR}) \Big|_{K_1}   +  \alp_1 \st_{E_1\V},\quad
 e_h\Big|_{K_2}  
 = (e_h^G + e_h^{SR}) \Big|_{K_2}   +  \alp_2 \st_{E_2\V}.
\end{equation}

Define a jump of a function $f$ at $\V$ as 
\[ [[f]]_{\V}=f|_{K_1}(\V)- f|_{K_2}(\V).\]
Then, since $\Pi_h p$ has no jump at $\V$ and $\st_{E_1\V}(\V)=\st_{E_2\V}(\V)=1$,
\eqref{post:ehK1K2} makes
\begin{equation}\label{post:pha1a2}
  [[ p_h]]_{\V} = [[e_h]]_{\V}= [[ e_h^G + e_h^{SR}]]_{\V} + \alp_1 -\alp_2.
\end{equation}
Roughly speaking,  \eqref{post:inalp12ineq} and \eqref{post:pha1a2}
help us to get $\alp_1, \alp_2$
with $[[ p_h]]_{\V}$ which we can calculate.

Choose two constants $\gam_1, \gam_2$ so that
\begin{equation}\label{post:gam13ell}
\gam_1\ell_1 + \gam_2\ell_3 =0,\quad \gam_1-\gam_2=[[p_h]]_{\V}.
\end{equation}
Then, the differences $\alp_1-\gam_1, \alp_2-\gam_2$
are estimated in the following lemma.
\begin{lemma}\label{lem:alp-s}
  Let $\m$ be the mean of $e_h^G+e_h^{SR}$ over $\O$. Then we have, for $i=1,2$,
\begin{equation}\label{eq:alp-gam}
\norm{ (\alp_i-\gam_i) \st_{E_i\V} }{0,K_i}
\le \Cs (\norm{e_h^G + e_h^{SR}-\m}{0,K_1\cup K_2} 
+ |\u-\u_h|_{1,K_1\cup K_2} + \|p-\Pi_hp\|_{0,K_1\cup K_2}).
\end{equation}
\end{lemma}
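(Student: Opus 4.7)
The plan is to reduce the estimate to a $2\times 2$ linear system whose unknowns are $\alp_1-\gam_1$ and $\alp_2-\gam_2$. Subtracting $\gam_1-\gam_2=[[p_h]]_\V$ from the jump identity \eqref{post:pha1a2} produces
\[
(\alp_1-\gam_1) - (\alp_2-\gam_2) = -[[e_h^G+e_h^{SR}]]_\V,
\]
while subtracting $\gam_1\ell_1+\gam_2\ell_3=0$ from $\alp_1\ell_1+\alp_2\ell_3$ yields
\[
\ell_1(\alp_1-\gam_1) + \ell_3(\alp_2-\gam_2) = \alp_1\ell_1+\alp_2\ell_3.
\]
Lemma \ref{lem:c1pc2}, applied to the back-to-back pair $K_1,K_2$, already bounds the right-hand side of the second equation by $\Cs(|\u-\u_h|_{1,K_1\cup K_2}+\|p-\Pi_hp\|_{0,K_1\cup K_2})$.

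The coefficient matrix has determinant $\ell_1+\ell_3$, which by shape regularity is comparable to the diameter $h$ of $K_1\cup K_2$. Inverting via Cramer's rule gives, for $i=1,2$,
\[
|\alp_i-\gam_i| \le \Cs\Bigl(h^{-1}|\alp_1\ell_1+\alp_2\ell_3| + |[[e_h^G+e_h^{SR}]]_\V|\Bigr).
\]
Since the mean $\m$ does not contribute to the jump, $[[e_h^G+e_h^{SR}]]_\V=[[e_h^G+e_h^{SR}-\m]]_\V$; combining this with the standard inverse inequality for point evaluation of a piecewise cubic on a shape-regular triangle delivers
\[
|[[e_h^G+e_h^{SR}]]_\V| \le \Cs h^{-1}\|e_h^G+e_h^{SR}-\m\|_{0,K_1\cup K_2}.
\]

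To close the argument I observe that, by \eqref{def:spucoef} and the affine pullback to the reference triangle, $\st_{E_i\V}$ is uniformly bounded in $L^\infty$, so $\|\st_{E_i\V}\|_{0,K_i}\le\Cs h$. Multiplying the bound on $|\alp_i-\gam_i|$ by this norm yields \eqref{eq:alp-gam}, with the two factors of $h^{-1}$ absorbed by the single factor of $h$. The main obstacle is really just bookkeeping: the linear algebra is trivial once the system is written down, but the argument only works because $\ell_1+\ell_3\gtrsim h$ by shape regularity and because the point-evaluation and sting-function norm scalings conspire to cancel. In particular, no quantitative information on how close $\V$ is to exactly singular is used here, which explains why this ingredient of the postprocessing is stable uniformly in $\Theta_{\min}$.
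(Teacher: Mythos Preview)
Your proof is correct and follows essentially the same approach as the paper: form the $2\times 2$ system for $d_i=\alp_i-\gam_i$ by subtracting \eqref{post:gam13ell} from \eqref{post:inalp12ineq} and \eqref{post:pha1a2}, solve it (the paper writes the explicit identities $(\ell_1+\ell_3)d_i=(d_1\ell_1+d_2\ell_3)\pm\ell_j(d_1-d_2)$, which is exactly Cramer's rule), bound the jump term via point evaluation of the mean-free polynomial $e_h^G+e_h^{SR}-\m$, and finish with the scaling $\|\st_{E_i\V}\|_{0,K_i}\le\Cs h$. The only cosmetic difference is that the paper tracks $\ell_1^{-1},\ell_3^{-1}$ separately rather than a single $h^{-1}$, which is immaterial by shape regularity.
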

\begin{proof}
  By \eqref{post:inalp12ineq}, \eqref{post:pha1a2}, \eqref{post:gam13ell},
the differences $d_1=\alp_1-\gam_1, d_2=\alp_2-\gam_2$ satisfy
\begin{equation}\label{post:ind12ineq}
  |d_1 \ell_1 + d_2 \ell_3 | \le
  \Cs(|\u-\u_h|_{1,K_1\cup K_2} + \|p-\Pi_hp\|_{0,K_1\cup K_2}),\quad
  d_1-d_2+ [[ e_h^G + e_h^{SR}]]_{\V}=0.
\end{equation}
The equation in \eqref{post:ind12ineq} induces that
\begin{equation}\label{post:estd1d2}
  |d_1-d_2| \le \Cs(  \ell_1^{-1}\|e_h^G + e_h^{SR}-\m\|_{K_1}
  +  \ell_3^{-1}\|e_h^G + e_h^{SR}-\m\|_{K_2}),
\end{equation}
since
\[  [[ e_h^G + e_h^{SR}]]_{\V} =  (e_h^G + e_h^{SR}-\m)\Big|_{K_1}(\V)-(e_h^G + e_h^{SR}-\m)\Big|_{K_2}(\V).\]

Note that
\begin{equation}\label{post:stE1V}
  \|\st_{E_1\V}\|_{0,K_1} \le \Cs \ell_1,\quad  \|\st_{E_2\V}\|_{0,K_2} \le \Cs \ell_3. 
\end{equation}
Then, combining \eqref{post:ind12ineq}-\eqref{post:stE1V},
the estimation \eqref{eq:alp-gam} comes from the following identities:
\[ (\ell_1+\ell_3)d_1=(d_1\ell_1+d_2\ell_3)+\ell_3(d_1-d_2),\quad
    (\ell_1+\ell_3)d_2=(d_1\ell_1+d_2\ell_3)-\ell_1(d_1-d_2).\]
\end{proof}
\noindent
For another pair of two triangles $K_3, K_4$,
we can choose $\gam_3, \gam_4$ in the similar way of $\gam_1, \gam_2$. 

Now, for each  interior quasi singular vertex $\V\in\SS_h^i$,
calculate such $\gam_1,\gam_2, \gam_3, \gam_4$ and define 
\[ s_h^{\V} =\gam_1 \st_{E_1\V} + \gam_2 \st_{E_2\V}
  +\gam_3 \st_{E_3\V} + \gam_4 \st_{E_4\V}, \]
and 
  \begin{equation}\label{post:def:shi}
    s_h^i = \sum_{ \V\in\SS_h^i} s_h^{\V}.
  \end{equation}
Then, from Theorem \ref{thm:veloerror}, \ref{thm:ehGSR} and
Lemma \ref{lem:alp-s}, we establish the following lemma.
\begin{lemma}\label{lem:errorbdyi}
 If $\u\in[H^5(\O)]^2, p\in H^4(\O)$, we have
  \begin{equation}\label{eq:mainstable}
    \| e_h^{SSi}-s_h^i\|_{0} \le Ch^4 (|\u|_{5} + |p|_4).
  \end{equation}
\end{lemma}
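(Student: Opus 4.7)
The plan is to aggregate the local estimate from Lemma \ref{lem:alp-s} over all interior quasi singular vertices and combine it with the three global $O(h^4)$ bounds already established.

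First, I would verify a disjointness property for the basins $\B(\V)$, $\V\in\SS_h^i$. If two distinct interior quasi singular vertices $\V,\V'\in\SS_h^i$ shared a triangle, they would be joined by an edge of that triangle; since both are interior, that edge must be interior, contradicting Lemma \ref{lem:isolvtx}. Consequently the summands $e_h^{\V}-s_h^{\V}$ for $\V\in\SS_h^i$ have pairwise disjoint supports, so
\[ \|e_h^{SSi}-s_h^i\|_0^2 = \sum_{\V\in\SS_h^i}\|e_h^{\V}-s_h^{\V}\|_0^2. \]

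Second, fix $\V\in\SS_h^i$. By Lemma \ref{lem:insing4}, $\B(\V)=K_1\cup K_2\cup K_3\cup K_4$, and on each $K_j$ the difference $e_h^{\V}-s_h^{\V}$ equals $(\alp_j-\gam_j)\st_{E_j\V}$. Lemma \ref{lem:alp-s}, applied to the back-to-back pair $K_1,K_2$ and then to the analogously constructed pair $K_3,K_4$, bounds $\norm{(\alp_j-\gam_j)\st_{E_j\V}}{0,K_j}$ for $j=1,2,3,4$ by a fixed multiple of
\[ \|e_h^G+e_h^{SR}-\m\|_{0,\B(\V)} + |\u-\u_h|_{1,\B(\V)} + \|p-\Pi_h p\|_{0,\B(\V)}. \]
Squaring and summing over $j$ and then over $\V$, and using disjointness of the basins to recombine the local contributions into global norms, I obtain
\[ \|e_h^{SSi}-s_h^i\|_0 \le \Cs\bigl(\|e_h^G+e_h^{SR}-\m\|_0 + |\u-\u_h|_1 + \|p-\Pi_h p\|_0\bigr). \]

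Third, each term on the right is already $O\bigl(h^4(|\u|_5+|p|_4)\bigr)$: the first by Theorem \ref{thm:ehGSR}, the second by Theorem \ref{thm:veloerror}, and the third by the standard $L^2$ approximation estimate for the continuous projection onto the piecewise cubic space, which yields $\|p-\Pi_h p\|_0\le Ch^4|p|_4$ when $p\in H^4(\O)$. Combining the three delivers the stated bound.

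The main obstacle is the disjointness bookkeeping at the global summation step. Lemma \ref{lem:alp-s} is per-vertex with a right-hand side attributed to two triangles of $\B(\V)$, so a naive summation could multiply-count contributions on triangles shared between basins. The vertex-isolation provided by Lemma \ref{lem:isolvtx} is precisely what rules this out and allows the per-vertex squared local bounds to assemble into a clean global $L^2$ norm without any combinatorial overhead; once that is in place, the remainder of the argument is a routine combination of the cited results.
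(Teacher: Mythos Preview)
The proposal is correct and follows the same route as the paper, which simply cites Theorem~\ref{thm:veloerror}, Theorem~\ref{thm:ehGSR}, and Lemma~\ref{lem:alp-s} without further detail. Your explicit disjointness argument via Lemma~\ref{lem:isolvtx} and the summation bookkeeping are exactly the omitted steps the paper takes for granted.
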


\subsection{Remove boundary  spurious error  $e_h^{SSb}$}
We have known $p_h$ and  $s_h^i$ such that
 \begin{equation}\label{post:def:ehi2}
   p_h-s_h^i -\Pi_h p = e_h^G + e_h^{SR} + e_h^{SSi}-s_h^i + e_h^{SSb}.
 \end{equation}
 In this subsection, we will deal with the error $e_h^{SSb}$ in \eqref{post:def:ehi2}
for  boundary  quasi singular vertices.

Denote by $\mathcal{R}_h$, the set all regular vertices,
that is $\mathcal{R}_h=\VV_h\setminus \SS_h$.
Let $\p\O\setminus\mathcal{R}_h$ consist of $J$ components 
$ \sg_1, \sg_2, _3,\cdots, \sg_J$ and
define quasi singular chains  as
\[ \Q_j=\VV_h \cap \sg_j,\quad j=1,2,\cdots,J.\]
Note $\Q_1, \Q_2,\cdots,\Q_J$ are sets of consecutive boundary quasi singular  vertices
separated by regular vertices.
We will first remove spurious error for all quasi singular chains
which do not contain any corner of $\p\O$
in subsubsection \ref{subsubsection:notcorner} below.
Then we will go to the remaining quasi singular chains having a corner
in  subsubsection \ref{subsubsection:corner}.

Let $\SS_h^{br}$ be the union of all quasi singular chains not having any corner
and $\SS_h^{bc}=\SS_h^b\setminus \SS_h^{br}$.
Then, split $e_h^{SSb}$ into
\begin{equation}\label{eq:splitssb}
  e_h^{SSb} = e_h^{SSbr} + e_h^{SSbc},
\end{equation}
where
\[ e_h^{SSbr} = \sum_{ \V\in\SS_h^{br}} e_h^{\V},\quad
  e_h^{SSbc} = \sum_{ \V\in\SS_h^{bc}} e_h^{\V}.\]

In the remaining analysis,
we will use the notations in this paragraph. Let $\Sg$ be  a set
of $m+2$ consecutive vertices on a
line segment of $\p\O$ such that
\begin{equation}\label{post:def:S}
  \Sg=\{ \V_0, \V_1,\cdots, \V_m, \V_{m+1}\},
\end{equation}
as in Figure \ref{fig:singbdy}.
Assume $\V_1, \V_2,\cdots,\V_m$ are quasi singular, actually exact singular.
Then, there exists a vertex $\W$ such that,
for each $k\in\{0,1,2,\cdots,m+1\}$,
there is an edge $E_k$ which connects $\W$ and $\V_k$.
Let $K_k$ be the triangle with vertices $\V_{k-1}, \V_k, \W$ and
$ \ell_k=|\V_{k-1}\V_k|,\ k=1,2,\cdots,m+1.$
\begin{figure}[ht]
  \hspace{3cm}
\includegraphics[width=0.6\linewidth]{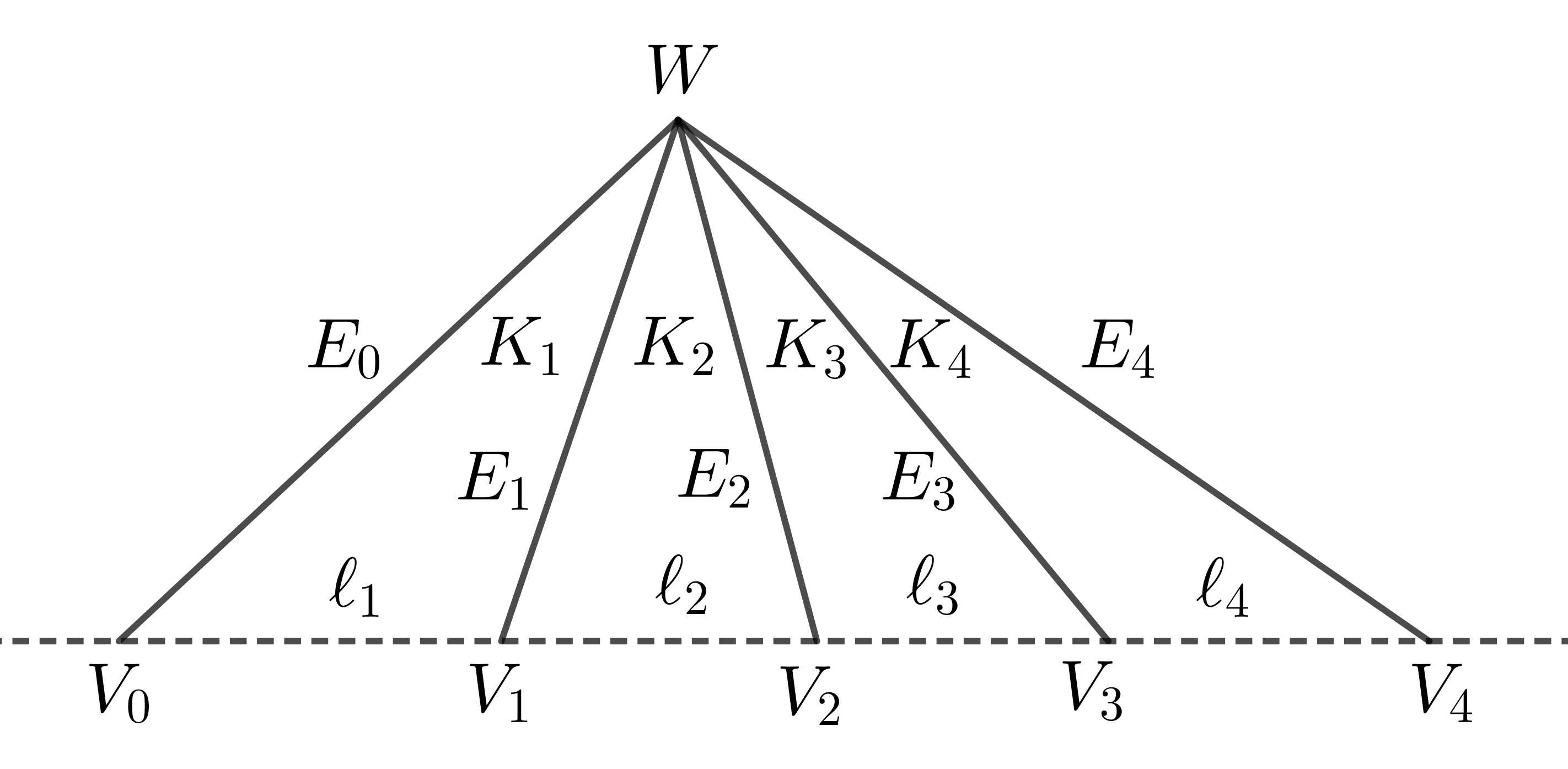}
\caption{Consecutive boundary singular  vertices $\V_1, \V_2, \V_3$}
\label{fig:singbdy}
\end{figure}

To avoid pathological meshes as the examples in Figure \ref{fig:pathomesh},
we assume the following on the triangulation $\Th$:
\begin{assumption}\label{asmesh}
  \begin{enumerate}
    \item
Each line segment of $\p\O$ connecting two corner of $\p\O$
has at least two regular vertices.
\item
  Each quasi singular vertex which is a corner  of $\p\O$
  has no interior edge connecting it to other boundary vertex.
\end{enumerate}
\end{assumption}
\begin{figure}[ht]
\hspace{1cm}
\subfloat[Each boundary segment has only one regular vertex.
]{
\includegraphics[width=0.28\textwidth]{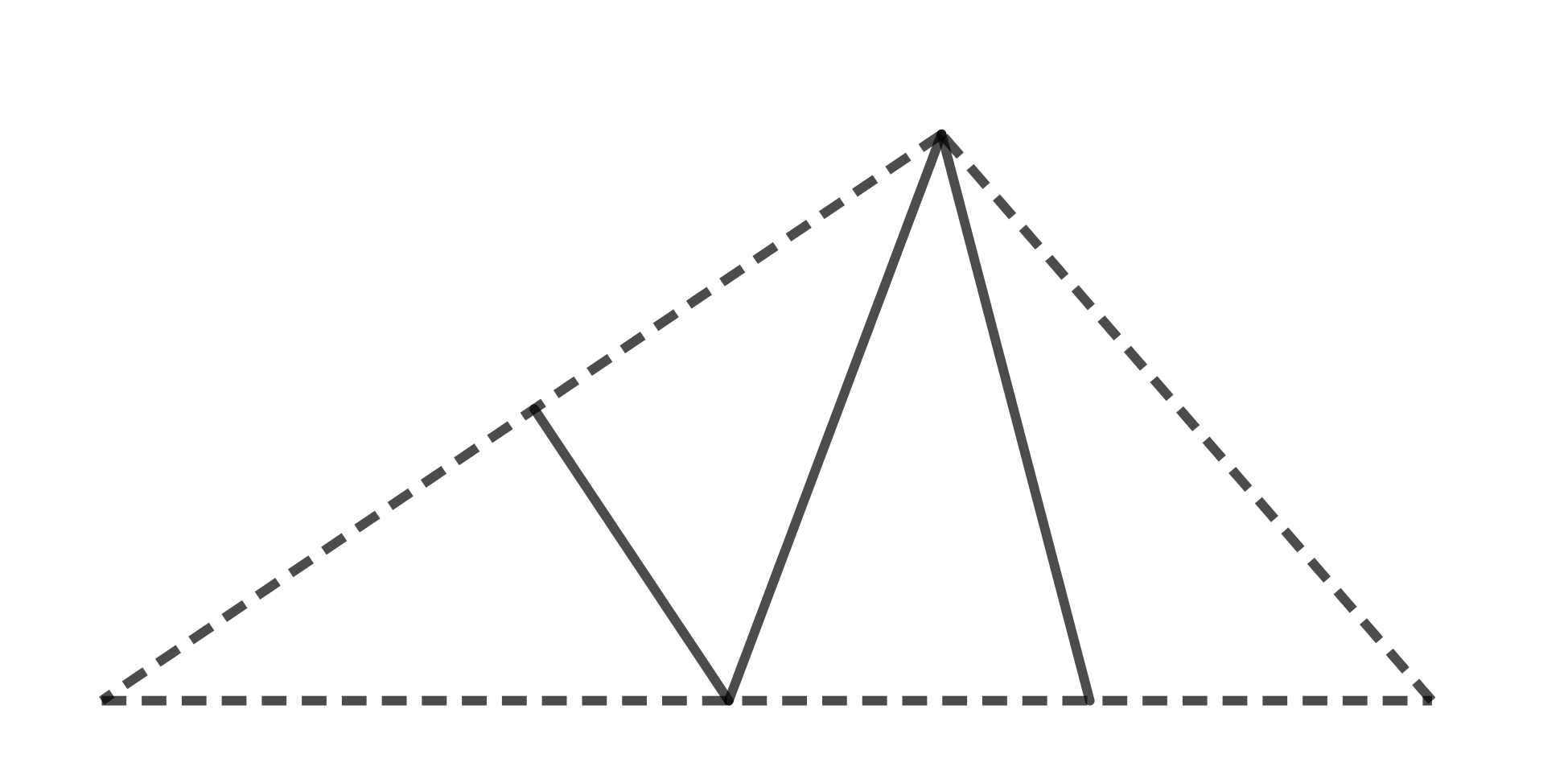}
}\quad
\subfloat[A quasi singular corner $\V$
is connected to other boundary vertex by an interior edge.
]{
\includegraphics[width=0.50\textwidth]{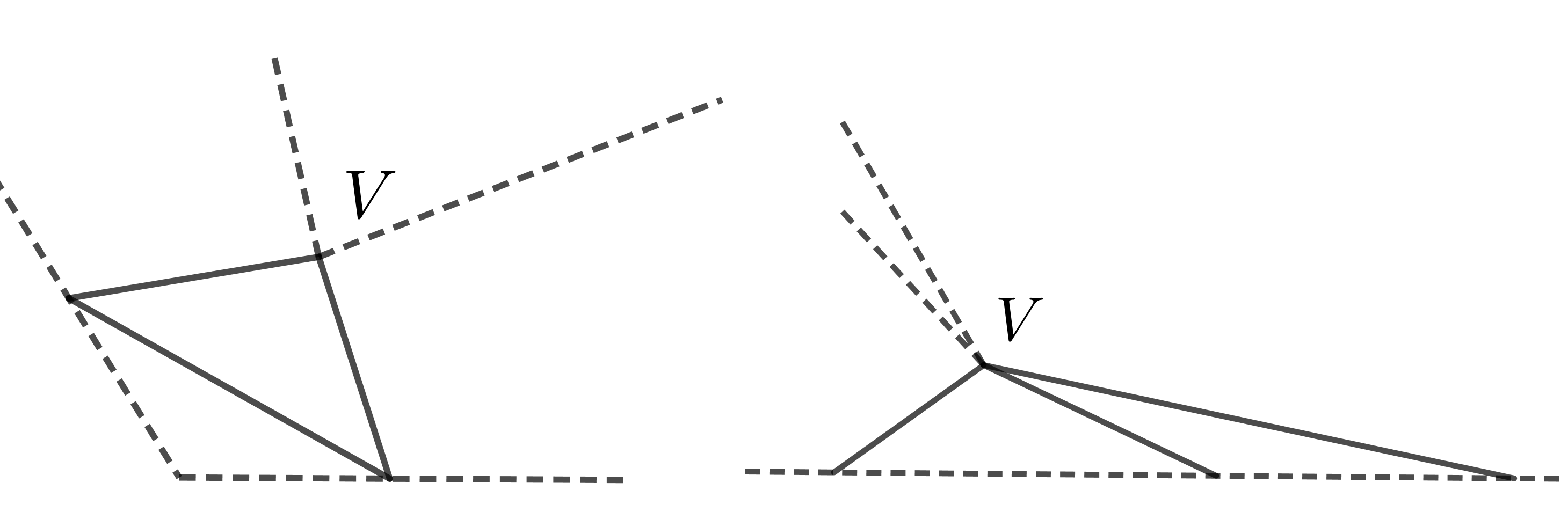}
}\qquad
\caption{Examples of pathological meshes (dashed lines belong to $\p\O$.)}
\label{fig:pathomesh}
\end{figure}

\subsubsection{Quasi singular chain not having any corner}\label{subsubsection:notcorner}
Let $\Q$ be a quasi singular chain which does not have any corner.
We can set in \eqref{post:def:S} that
$$\Q=\{ \V_1,\V_2,\cdots,\V_m \}\quad \mbox{ for } m\ge1,$$
and $\V_0, \V_{m+1}$ are regular vertices.

Then, we note that $\W$ is also regular.
It is clear  by Lemma \ref{lem:isolvtx}
if $\W$ is an interior vertex.
In case of $\W\in\p\O$,
$\W$ is not a corner as in Figure \ref{fig:pathomesh}-(b) by Assumption \ref{asmesh}.
Thus, $\W$ is regular on a line segment of $\p\O$ since $m\ge1$.

We can represent $e_h^{\V_k}$ with unknown constants $\alp_k, \bet_k$ as
\begin{equation}\label{post:reg:ehvk}
  e_h^{\V_k} =\alp_k \st_{E_{k-1}\V_k} + \bet_k\st_{E_{k+1}\V_k} \quad k=1,2,\cdots,m. 
\end{equation}
Then, by Lemma \ref{lem:c1pc2}, we have, for $k=1,2,\cdots,m$,
\begin{equation}\label{post:alpkellk}
  |\alp_k \ell_{k} + \bet_{k}\ell_{k+1}| \le
  \Cs(|\u-\u_h|_{1,\B(\V_k)} + \|p-\Pi_hp\|_{0,\B(\V_k)}).
\end{equation}

We note that $\V_1, \V_2,\cdots,\V_m$ are the only quasi singular vertices 
in $\B(\V_1, \V_2,\cdots,\V_m)$ since $\V_0,\V_{m+1},\W$ are regular. 
 Thus, from  \eqref{eq:spliteh}, \eqref{def:ehs}, \eqref{post:reg:ehvk}, we have
\begin{equation}\label{post:reg:ehKk}
  \begin{array}{c}
  e_h\Big|_{K_1} = (e_h^G+e_h^{SR})\Big|_{K_1}+\alp_1\st_{E_0\V_1},\quad
    e_h\Big|_{K_{m+1}} = (e_h^G+e_h^{SR})\Big|_{K_{m+1}}+\bet_m\st_{E_{m+1}\V_m}, \\
    e_h\Big|_{K_k} = (e_h^G+e_h^{SR})\Big|_{K_k}+
    \alp_k \st_{E_{k-1}\V_k} + \bet_{k-1}\st_{E_{k}\V_{k-1}},\quad k=2,3,\cdots,m.
  \end{array}
\end{equation}
Define a jump of a function $f$ at $\V_k$ as 
\[ [[f]]_{\V_k}=f|_{K_k}(\V_k)- f|_{K_{k+1}}(\V_k), \quad k=1,2,\cdots,m.\]
Then, from \eqref{post:reg:ehKk}, we have,   for $k=1,2,\cdots,m$,
\begin{equation}\label{eq:bdphjump}
  [[ p_h]]_{\V_k} =[[e_h]]_{\V_k} = (\alp_k-\frac1{10}\bet_{k-1})
  -(\bet_k-\frac1{10}\alp_{k+1})+[[e_h^G+e_h^{SR}]]_{\V_k},
\end{equation}
with the definition of sting functions in \eqref{def:zEV}.
In \eqref{eq:bdphjump}, $\bet_0=\alp_{m+1}=0$.

We can find $2m$ scalars
$\alpt_1, \bett_1, \alpt_2, \bett_2, \cdots,\alpt_m, \bett_m$ such that
\begin{equation}\label{post:reg:sys}
  \alpt_k \ell_{k} + \bett_{k}\ell_{k+1} =0,\quad
  [[ p_h]]_{\V_k} = (\alpt_k-\frac1{10}\bett_{k-1})
  -(\bett_k-\frac1{10}\alpt_{k+1}),\quad k=1,2,\cdots,m,
\end{equation}
where $\bett_{0}=\alpt_{m+1}=0$.
Note that the conditions in \eqref{post:reg:sys}
are similar to those in \eqref{post:alpkellk}, \eqref{eq:bdphjump}.
The existence of $\alpt_k,\bett_k$ is guaranteed by the argument
in the proof of Lemma \ref{lem:shVk} below.

Define discrete pressures $s_h^{\V_k}$ as
\begin{equation}\label{post:def:shVk}
  s_h^{\V_k} =\alpt_k \st_{E_{k-1}\V_k} + \bett_k\st_{E_{k+1}\V_k},
  \quad  k=1,2,\cdots,m.
\end{equation}
Then, the difference $e_h^{\V_k}-s_h^{\V_k}$ is estimated in the following lemma.
\begin{lemma}\label{lem:shVk}
  Let $\m$ be the mean of $e_h^G + e_h^{SR}$ over $\O$ Then, we have,
  for $k=1,2,\cdots,m$,
 \begin{equation}\label{eq:reg:ehvk}
   \norm{e_h^{\V_k} - s_h^{\V_k}}{0,\B(\V_k)}
\le \Cs (\norm{e_h^G + e_h^{SR}-\m}{0,\B(\W)} 
+ |\u-\u_h|_{1,\B(\W)} + \|p-\Pi_hp\|_{0,\B(W)}).
\end{equation}
\end{lemma}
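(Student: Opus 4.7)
The plan is to work with the differences $d_k^\alpha := \alp_k - \alpt_k$ and $d_k^\beta := \bet_k - \bett_k$, reducing the lemma to bounding these $2m$ scalars. Subtracting \eqref{post:reg:sys} from \eqref{post:alpkellk} and \eqref{eq:bdphjump} gives the coupled system
\begin{equation*}
|d_k^\alpha \ell_k + d_k^\beta \ell_{k+1}| \le \Cs \left( |\u - \u_h|_{1, \B(\V_k)} + \|p - \Pi_h p\|_{0, \B(\V_k)} \right),
\end{equation*}
\begin{equation*}
\left( d_k^\alpha - \tfrac{1}{10} d_{k-1}^\beta \right) - \left( d_k^\beta - \tfrac{1}{10} d_{k+1}^\alpha \right) = -[[e_h^G + e_h^{SR}]]_{\V_k}, \quad k=1,\ldots,m,
\end{equation*}
with the convention $d_0^\beta = d_{m+1}^\alpha = 0$ inherited from \eqref{post:reg:sys} and \eqref{eq:bdphjump}. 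Rewriting $[[e_h^G + e_h^{SR}]]_{\V_k}$ as the jump of $e_h^G + e_h^{SR} - \m$ at $\V_k$ and applying an inverse estimate for cubic polynomials on $K_k \cup K_{k+1}$, exactly as in the proof of Lemma \ref{lem:alp-s}, yields
\begin{equation*}
|[[e_h^G + e_h^{SR}]]_{\V_k}| \le \Cs \, \ell^{-1} \norm{e_h^G + e_h^{SR} - \m}{0, K_k \cup K_{k+1}},
\end{equation*}
where $\ell$ denotes any local edge length, all comparable by shape regularity.

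The key step is then to invert this $2m$-dimensional linear system with a $\sigma$-uniform bound on its inverse, which will simultaneously furnish the existence of $\alpt_k, \bett_k$ asserted after \eqref{post:reg:sys}. I would use the first equation to eliminate $d_k^\beta = -(\ell_k/\ell_{k+1}) d_k^\alpha + (\text{data})/\ell_{k+1}$ and substitute into the second, obtaining a tridiagonal system in $\{d_k^\alpha\}_{k=1}^m$ with main diagonal $1 + \ell_k/\ell_{k+1}$, subdiagonal $\ell_{k-1}/(10 \ell_k)$, and superdiagonal $1/10$. All entries are bounded above, and the diagonal is bounded away from zero, by constants depending only on $\sigma$. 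The central obstacle is to show this tridiagonal matrix is invertible with operator norm of its inverse bounded by $\Cs$; I expect this to follow either from an energy-type identity exploiting the Dirichlet-style boundary conditions $d_0^\beta = d_{m+1}^\alpha = 0$, or from a Gershgorin-type argument exploiting the small factor $1/10$ that multiplies the off-diagonal terms arising from the jump equation.

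Once the bound $|d_k^\alpha| \ell_k + |d_k^\beta| \ell_{k+1} \le \Cs \cdot (\text{RHS of \eqref{eq:reg:ehvk}})$ is in hand, the conclusion follows from the sting-function norm estimate $\norm{\st_{E_j\V}}{0,K} \le \Cs \ell_j$ analogous to \eqref{post:stE1V}, giving
\begin{equation*}
\norm{e_h^{\V_k} - s_h^{\V_k}}{0, \B(\V_k)} \le |d_k^\alpha| \norm{\st_{E_{k-1}\V_k}}{0, K_k} + |d_k^\beta| \norm{\st_{E_{k+1}\V_k}}{0, K_{k+1}} \le \Cs \left( |d_k^\alpha| \ell_k + |d_k^\beta| \ell_{k+1} \right),
\end{equation*}
with the data norms consolidated onto $\B(\W)$ via the inclusion $\bigcup_j \B(\V_j) \subset \B(\W)$, which holds since every $K_j$ contains $\W$.
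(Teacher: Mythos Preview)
Your approach is essentially the paper's: form the differences $\alp_k-\alpt_k,\ \bet_k-\bett_k$, subtract the two systems, eliminate one family of unknowns to obtain a tridiagonal system, and bound its inverse via the factor $1/10$. The paper eliminates the $\alpha$-differences rather than the $\beta$-differences, but this is immaterial and leads to the same structure. Your one point of uncertainty---the invertibility with $\Cs$-bounded inverse---resolves exactly as the paper does it: the \emph{transpose} of your tridiagonal matrix is strictly row-diagonally dominant, since in column $k$ the off-diagonal entries $1/10$ (from row $k-1$) and $\ell_k/(10\ell_{k+1})$ (from row $k+1$) sum to precisely one tenth of the diagonal entry $1+\ell_k/\ell_{k+1}$; together with the shape-regularity bound on $m$ and on the length ratios this yields $\vertiii{A^{-1}}_2\le\Cs$.
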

\begin{proof}
  Let $\alpe_k=\alp_k-\alpt_k,\ \bete_k=\bet_k-\bett_k,\ k=1,2,\cdots,m$ and
  $\bete_0=\alpe_{m+1}=0$. Then
  from \eqref{post:alpkellk}-\eqref{post:reg:sys},
  we have
  \begin{equation}\label{post:reg:dalbe}
 \arraycolsep=1.4pt\def\arraystretch{2}   \begin{array}{c}
    |\alpe_k \ell_{k} + \bete_{k}\ell_{k+1}| \le
  \Cs(|\u-\u_h|_{1,\B(\V_k)} + \|p-\Pi_hp\|_{0,\B(\V_k)}), \\ 
(\alpe_k-\disp\frac1{10}\bete_{k-1})
      -(\bete_k-\disp\frac1{10}\alpe_{k+1})+[[e_h^S+e_h^{SR}]]_{\V_k}=0.
    \end{array}
  \end{equation}
Set $\aa_{m+1}=0$ and for $k=1,2,\cdots,m$,
\begin{equation}\label{post:reg:akbk}
  r_k=\frac{\ell_{k+1}}{\ell_{k}},\ \aa_k= \alpe_k+r_k\bete_k,\ 
\bb_k= \aa_k+\frac1{10}\aa_{k+1}  - (\alpe_k-\frac1{10}\bete_{k-1})
  +(\bete_k-\frac1{10}\alpe_{k+1}).
\end{equation}
Then, eliminating $\alpe_1,\alpe_2,\cdots,\alpe_{m+1}$ in \eqref{post:reg:akbk},
we have $m$ equations for $\bete_1,\bete_2,\cdots,\bete_{m}$,
\begin{equation}\label{post:sysalpb}
  \frac1{10}\bete_{k-1} + (1+r_k)\bete_k + \frac1{10}r_{k+1}\bete_{k+1}
  = b_k,\quad k=1,2,\cdots,m,  
\end{equation}
where $\bete_0=\bete_{m+1}=r_{m+1}=0$.

Rewrite \eqref{post:sysalpb}
with a matrix $A\in\R^{m\times m}$ in the form:
\begin{equation}\label{eq:Amatform}
  A(\bete_1,\bete_2,\cdots,\bete_m)^{t} = (\bb_1,\bb_2,\cdots,\bb_m).^t 
\end{equation}
For an example when $m=4$, since $\bete_0=\bete_{5}=0$,
\eqref{post:sysalpb} is written  in
\begin{equation}\label{eq:errormatrix}
  \left( \begin{array}{cccc}

           1+r_1 & r_2/{10} &  &  \\
           1/{10} & 1+ r_2 &  r_3/{10} & \\
                 &  1/{10} & 1+ r_3 &  r_4/{10} \\
          & & 1/{10} &  1+ r_4  \\
         \end{array}\right)
       \left( \begin{array}{c}
                \bete_1 \\
                \bete_2 \\
                \bete_3\\
                \bete_4
              \end{array}\right)
            = \left( \begin{array}{c}
                \bb_1 \\
                \bb_2 \\
                \bb_3\\
                \bb_4
              \end{array}\right).
          \end{equation}
 Note that $A$ is invertible since
 the transpose $A^t$ is strictly diagonally dominant.
 Thus, we have
            \begin{equation}\label{eq:normAm} \vertiii{A^{-1}}_2 \le \Cs,
            \end{equation}
since $m$ and $r_1,r_2,\cdots,r_m$  are bounded by $\Cs$.
From \eqref{post:reg:ehvk}, \eqref{post:def:shVk},
\eqref{post:reg:dalbe}, \eqref{post:reg:akbk}, \eqref{eq:Amatform}, \eqref{eq:normAm}, we obtain \eqref{eq:reg:ehvk} with
      $\aa_k=(\alpe_k \ell_{k} + \bete_{k}\ell_{k+1})/\ell_{k},\ k=1,2,\cdots,m.$       
\end{proof}

Now,
for each $\V\in \SS_h^{br}$, we can calculate $s_h^{\V}$
 similarly in \eqref{post:reg:sys}, \eqref{post:def:shVk}
and define
\begin{equation}\label{post:def:shbr}
   s_h^{br} = \sum_{ \V\in\SS_h^{br}} s_h^{\V}.
 \end{equation}
 Then, from Theorem \ref{thm:veloerror}, \ref{thm:ehGSR} and
Lemma \ref{lem:shVk}, we establish the following lemma.
 \begin{lemma}\label{lem:ehssbr}
   If $\u\in[H^5(\O)]^2, p\in H^4(\O)$, we have
   \begin{equation}\label{est:ehssbr}
    \norm{e_h^{SSbr}-s_h^{br}}{0} \le Ch^4(|\u|_{5}+ |p|_{4} ).
  \end{equation}
\end{lemma}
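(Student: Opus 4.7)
The plan is to derive \eqref{est:ehssbr} from the local bound in Lemma \ref{lem:shVk} by summing over all vertices in $\SS_h^{br}$, and then invoking the global estimates already established. Partition $\SS_h^{br}$ into the quasi singular chains $\Q_j$, $j=1,\dots,J$, that do not contain any corner, where $\Q_j=\{\V_{j,1},\dots,\V_{j,m_j}\}$ consists of consecutive boundary quasi singular vertices as in \eqref{post:def:S}. Each chain is associated with a single vertex $\W_j$ playing the role of $\W$ in the construction preceding Lemma \ref{lem:shVk}; every triangle incident to some $\V_{j,k}$ is adjacent to $\W_j$, so $\B(\V_{j,k})\subset \B(\W_j)$. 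Shape regularity forces $m_j\le \Cs$, since the vertices of $\Q_j$ are all vertices of the bounded-degree basin $\B(\W_j)$.

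Starting from the decomposition
\[ e_h^{SSbr}-s_h^{br}=\sum_{j=1}^{J}\sum_{k=1}^{m_j}\bigl(e_h^{\V_{j,k}}-s_h^{\V_{j,k}}\bigr),\]
where each summand is supported in $\B(\V_{j,k})\subset \B(\W_j)$, the triangle inequality combined with Lemma \ref{lem:shVk} and $m_j\le\Cs$ yields, for each chain,
\[ \Bigl\|\sum_{k=1}^{m_j}\bigl(e_h^{\V_{j,k}}-s_h^{\V_{j,k}}\bigr)\Bigr\|_{0,\B(\W_j)}^2 \le \Cs\Bigl(\|e_h^G+e_h^{SR}-\m\|_{0,\B(\W_j)}^2+|\u-\u_h|_{1,\B(\W_j)}^2+\|p-\Pi_hp\|_{0,\B(\W_j)}^2\Bigr),\]
where $\m$ denotes the mean of $e_h^G+e_h^{SR}$ over $\O$. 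By Lemma \ref{lem:isolvtx} and Assumption \ref{asmesh}, distinct chains sit in essentially disjoint parts of the mesh, so the family $\{\B(\W_j)\}_{j=1}^J$ has overlap bounded by $\Cs$. Summing over $j$ then gives
\[ \|e_h^{SSbr}-s_h^{br}\|_{0}^2 \le \Cs\Bigl(\|e_h^G+e_h^{SR}-\m\|_{0}^2+|\u-\u_h|_{1}^2+\|p-\Pi_hp\|_{0}^2\Bigr).\]

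The proof is completed by inserting the bound $|\u-\u_h|_1\le Ch^4|\u|_5$ from Theorem \ref{thm:veloerror}, $\|e_h^G+e_h^{SR}-\m\|_0\le Ch^4(|\u|_5+|p|_4)$ from Theorem \ref{thm:ehGSR}, and the standard piecewise polynomial interpolation estimate $\|p-\Pi_hp\|_0\le Ch^4|p|_4$ for $p\in H^4(\O)$. The main obstacle is the finite-overlap bookkeeping: one must check carefully that the length $m_j$ of each chain is uniformly bounded and that the auxiliary basins $\B(\W_j)$ for distinct chains overlap only boundedly many times. Both facts rely on shape regularity of $\{\Th\}_{h>0}$ together with Assumption \ref{asmesh} and the separation statement of Lemma \ref{lem:isolvtx}.
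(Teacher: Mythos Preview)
Your proposal is correct and follows essentially the same approach as the paper, which simply states that the lemma follows from Theorem \ref{thm:veloerror}, Theorem \ref{thm:ehGSR}, and Lemma \ref{lem:shVk}. You have merely made explicit the finite-overlap bookkeeping that the paper leaves implicit.
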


\subsubsection{Quasi singular chain having a corner}\label{subsubsection:corner}
Let $\hp_h = p_h - s_h^i -s_h^{br} $ and define
 \begin{equation}\label{post:def:ehb}
   \he_h= \hp_h -\Pi_h p = e_h^G + e_h^{SR} + e_h^{SSi}-s_h^i + e_h^{SSbr} -s_h^{br}
   + e_h^{SSbc}.
 \end{equation}
The remaining spurious error $ e_h^{SSbc}$ in \eqref{post:def:ehb} is our last target to be removed.
 
Let $\Q$ be a quasi singular chain containing  a corner $\C$
of two line segments $\Gamma, \Gamma_1$ of $\p\O$ 
such that
\begin{equation}\label{post:eq:Gamma01}
\#  (\Q\cap \Gamma_1)\ \le\ \# (\Q\cap \Gamma).
\end{equation}
We can set  in \eqref{post:def:S} that
$$\Q\cap \Gamma=\{ \V_0,\V_1,\V_2,\cdots,\V_m \}\quad \mbox{ for } m\ge0,$$
and $\V_0$ is the quasi singular corner $\C$ and  $\V_{m+1}$ is a regular vertex $\bR$.

Then, by Assumption \ref{asmesh}, $\V_{m+1}$ is not a corner. Thus,
there exists a triangle $K_{m+2}$ in $\Th$ which has the edge $E_{m+1}$ and
a vertex $X$ different to $\V_m$ as in Figure \ref{fig:cnrbwr}.

We remind that  $\SS_h^{bc}$ is the set of all
 boundary quasi singular  vertices consecutive from quasi singular corners.
 If $\W\in\SS_h^{bc}$, then $\W$ is quasi singular in $\Q\cap\Gamma_1$
 and $m=0$.
It contradicts to \eqref{post:eq:Gamma01}. 
Thus $\W\notin \SS_h^{bc}$.

For the vertex $\X$, if $\X\in\SS_h^{bc}$,
then $\W, \X$ lie on $\Gamma_1$ as in Figure \ref{fig:cnrbwr}-(a),
since $\X$ must be on a boundary line segment and $\W,\bR$ can not be corners
by Assumption \ref{asmesh}.
While $\W\notin \SS_h^{bc}$ is regular,  there exists one more regular vertex on $\Gamma_1$
by Assumption \ref{asmesh}, presented as $\bR_1$ in Figure \ref{fig:cnrbwr}-(a).
It conflicts with $\X\in\SS_h^{bc}$. Thus, we have  $\X\notin\SS_h^{bc}$, too.

With $2m+1$  unknown constants $\bet_0,\alp_1,\bet_1,\alp_2,\bet_2,\cdots,
\alp_m , \bet_m$,
we can represent that
\begin{equation}\label{post:cor:alpbet}
  e_h^{\C}\Big|_{K_1}=\bet_0\st_{E_1\C},\quad
  e_h^{\V_k} =\alp_k \st_{E_{k-1}\V_k} + \bet_k\st_{E_{k+1}\V_k} \quad k=1,2,\cdots,m. 
\end{equation}

\noindent
We note that $e_h^{SSbc}\big|_{K_{m+2}}=0$ since $\bR, \W, \X \notin \SS_h^{bc}$.
Thus, from \eqref{post:def:ehb}, \eqref{post:cor:alpbet}, we have 
\begin{equation}\label{post:cor:ehbkm}
  \arraycolsep=1.4pt\def\arraystretch{1.6}
  \begin{array}{lcl}
    \he_h|_{K_{m+2}}  &=&
      (e_h^G + e_h^{SR} + e_h^{SSi}-s_h^i + e_h^{SSbr} -s_h^{br})\Big|_{K_{m+2}},\\
    \he_h|_{K_{m+1}}  &=&
     (e_h^G + e_h^{SR} + e_h^{SSi}-s_h^i + e_h^{SSbr} -s_h^{br})\Big|_{K_{m+1}}
                          + \bet_m\st_{E_{m+1}\V_m},
  \end{array}
\end{equation}
and for $k=1,2,\cdots,m$,
\[\he_h\Big|_{K_k} = (e_h^G+e_h^{SR} + e_h^{SSi}-s_h^i + e_h^{SSbr} -s_h^{br}
                         )\Big|_{K_k}+ \alp_k \st_{E_{k-1}\V_k}+
    \bet_{k-1}\st_{E_{k}\V_{k-1}}.\]

Denote by $\M$, the midpoint of the edge $\overline{\bR\W}$ 
and define a jump of a function $f$ at $\M$ as
\[ [[f]]_{\M} = f\big|_{K_{m+1}} (\M) - f\big|_{K_{m+2}} (\M). \]
Then, from \eqref{post:cor:ehbkm}, we have 
\begin{equation}\label{post:cor:jumpm1}
  [[\hp_h]]_{\M}= [[ \he_h ]]_{\M} = -\frac1{10} \bet_m +
[[e_h^G + e_h^{SR} + e_h^{SSi}-s_h^i + e_h^{SSbr} -s_h^{br}]]_{\M},
\end{equation}
and for $k=1,2,\cdots,m$ and $\alp_{m+1}=0$, we have
\begin{equation}\label{eq:cor:bdphjump}
  [[ \hp_h]]_{\V_k} = (\alp_k-\frac1{10}\bet_{k-1})
  -(\bet_k-\frac1{10}\alp_{k+1})+[[e_h^G+e_h^{SR}+ e_h^{SSi}-s_h^i + e_h^{SSbr} -s_h^{br}]]_{\V_k}.
\end{equation}

As the unknowns satisfy
 \eqref{post:alpkellk}, \eqref{post:cor:jumpm1}, \eqref{eq:cor:bdphjump},
  we find $2m+1$ scalars $\bett_0,\alpt_1,\bett_1,\alpt_2,\bett_2,\cdots,
\alpt_m , \bett_m$ such that
\begin{equation}\label{def:sys:cornerab}
  [[\hp_h]]_{\M} = -\frac1{10} \bett_m,\quad
  [[ \hp_h]]_{\V_k} = (\alpt_k-\frac1{10}\bett_{k-1})-(\bett_k-\frac1{10}\alpt_{k+1}),\quad
 \alpt_k \ell_{k} + \bett_{k}\ell_{k+1} =0,
\end{equation}
with $\alpt_{m+1}=0$ and $k=1,2,\cdots,m$.
We can solve \eqref{def:sys:cornerab} by simple back substitution from $\bett_m$. 

Let $\m$ is the mean of $e_h^G + e_h^{SR}$ over $\O$ and denote
\begin{equation}\label{eq:cor:egh}
e_h^Z= e_h^G + e_h^{SR}-\m + e_h^{SSi}-s_h^i + e_h^{SSbr} -s_h^{br}.
\end{equation}
We can copy the notations and arguments in the proof of Lemma \ref{lem:shVk}
with removing $\bete_0=0$ and adding a equation for $\bete_{m}$ from
\eqref{post:cor:jumpm1}, \eqref{def:sys:cornerab}.
Then,
we meet a triangular system of $m+1$ linear equations
for $\bete_0, \bete_1, \cdots,\bete_m$
whose diagonal entries are all $1/10$.
Thus,
if we define discrete pressures $s_h^{\V_1},s_h^{\V_2},\cdots,s_h^{\V_m}$
as in \eqref{post:def:shVk},
the differences $e_h^{\V_k}-s_h^{\V_k}, k=1,2,\cdots,m$ satisfy
 \begin{equation}\label{eq:cor:ehvk}
   \norm{e_h^{\V_k} - s_h^{\V_k}}{0,\B(\V_k)}
\le \Cs (\|e_h^Z\|_{0,\B(\W)} 
+ |\u-\u_h|_{1,\B(\W)} + \|p-\Pi_hp\|_{0,\B(\W)}).
\end{equation}
In addition, we have
\begin{equation}\label{post:cor:bett0}
  \|(\bet_0-\bett_0)\st_{E_1\C}\|_{0,K_1} \le  \Cs ( \norm{e_h^Z}{0,\B(\W)}+
 |\u-\u_h|_{1,\B(\W)} + \|p-\Pi_hp\|_{0,\B(\W)}).
\end{equation}

Now, applying Lemma \ref{lem:c1pc2} and \eqref{post:cor:bett0}  with $\bett_0$
for every two back-to-back triangles in $\B(\C)$
in order starting at $K_1$,
we can find $s_h^{\C}$ consisting of sting functions such that
\begin{equation}\label{eq:cor:ehc}
 \norm{e_h^{\C}-s_h^{\C}}{0,\B(\C)}  \le \Cs (\|e_h^Z\|_{0,\B(\W,\C)}
+|\u-\u_h|_{1,\B(\W,\C)}
  + \|p-\Pi_hp\|_{0,\B(\W,\C)}).  
\end{equation}
Then for remaining vertices in $\Q\cap\Gamma_1\setminus\{\C\}=\{\Y_1,\Y_2,\cdots,\Y_n\}$ for $n\ge0$, utilizing similar jumps, 
we can find $s_h^{\Y_i}$ consisting of sting functions, $i=1,2,\cdots,n$ such that
\begin{equation}\label{eq:cor:ehy}
 \norm{e_h^{\Y_i}-s_h^{\Y_i}}{0,\B(\Y_i)}
  \le \Cs (\|e_h^Z\|_{0,\B(\Q)}+|\u-\u_h|_{1,\B(\Q)}+ \|p-\Pi_hp\|_{0,\B(\Q)}), 
\end{equation}
where $\B(\Q)=\B(\W,\C,\Y_1,\Y_2,\cdots,\Y_n)$.

After we have done this postprocess corner by corner of $\O$, we can define
\begin{equation}\label{post:def:shbc}
  s_h^{bc} = \sum_{ \V\in\SS_h^{bc}} s_h^{\V}.
\end{equation}
Then, combining  \eqref{eq:cor:egh}, \eqref{eq:cor:ehvk}, \eqref{eq:cor:ehc},
\eqref{eq:cor:ehy} with Theorem \ref{thm:veloerror}, \ref{thm:ehGSR}
and Lemma \ref{lem:errorbdyi}, \ref{lem:ehssbr},
we estimate that if $\u\in[H^5(\O)]^2, p\in H^4(\O)$,
\begin{equation}\label{post:est:ehssbc}
  \norm{e_h^{SSbc} - s_h^{bc}}{0} \le Ch^4(|\u|_{5}+ |p|_{4} ).
\end{equation}

\begin{figure}[ht]
\hspace{3mm}
\subfloat[$\W\in\p\O$]{
\includegraphics[width=0.43\textwidth]{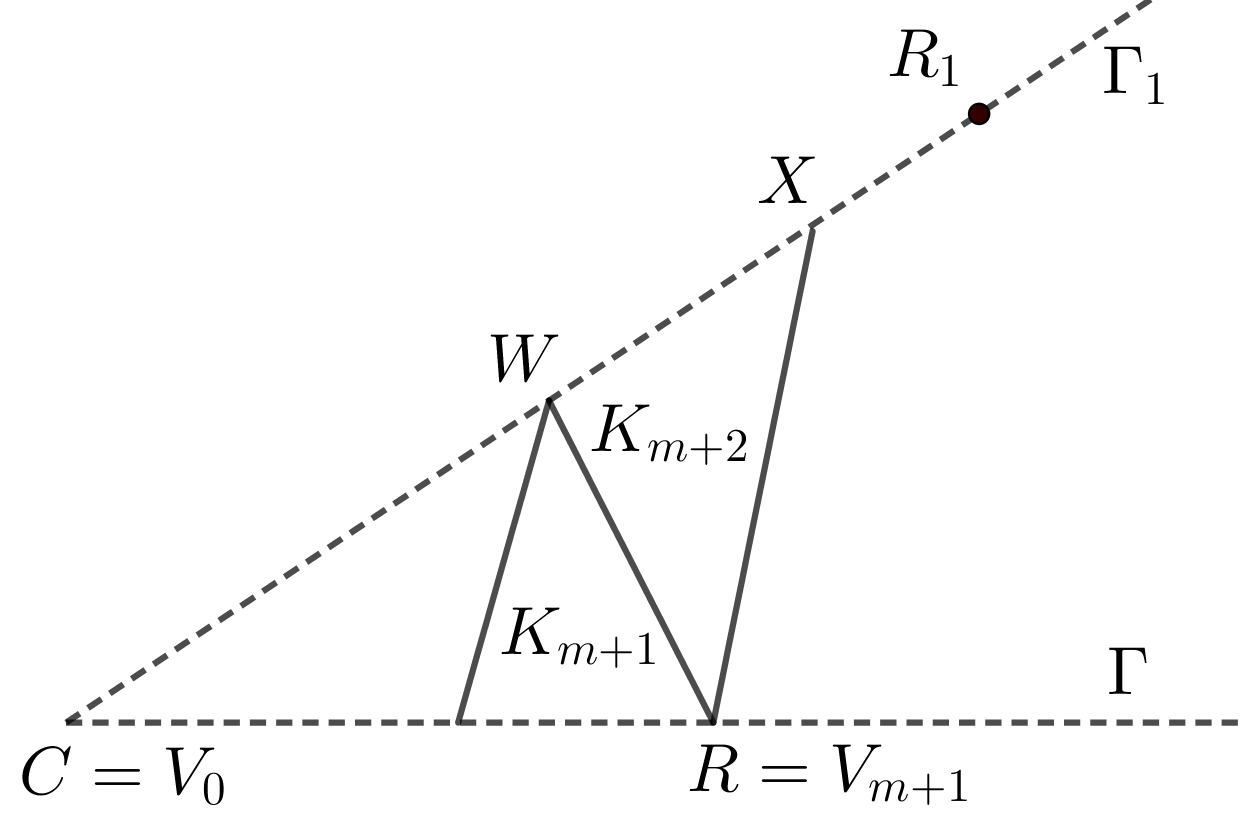}
}\quad
\subfloat[$\W\notin\p\O$]{
\includegraphics[width=0.45\textwidth]{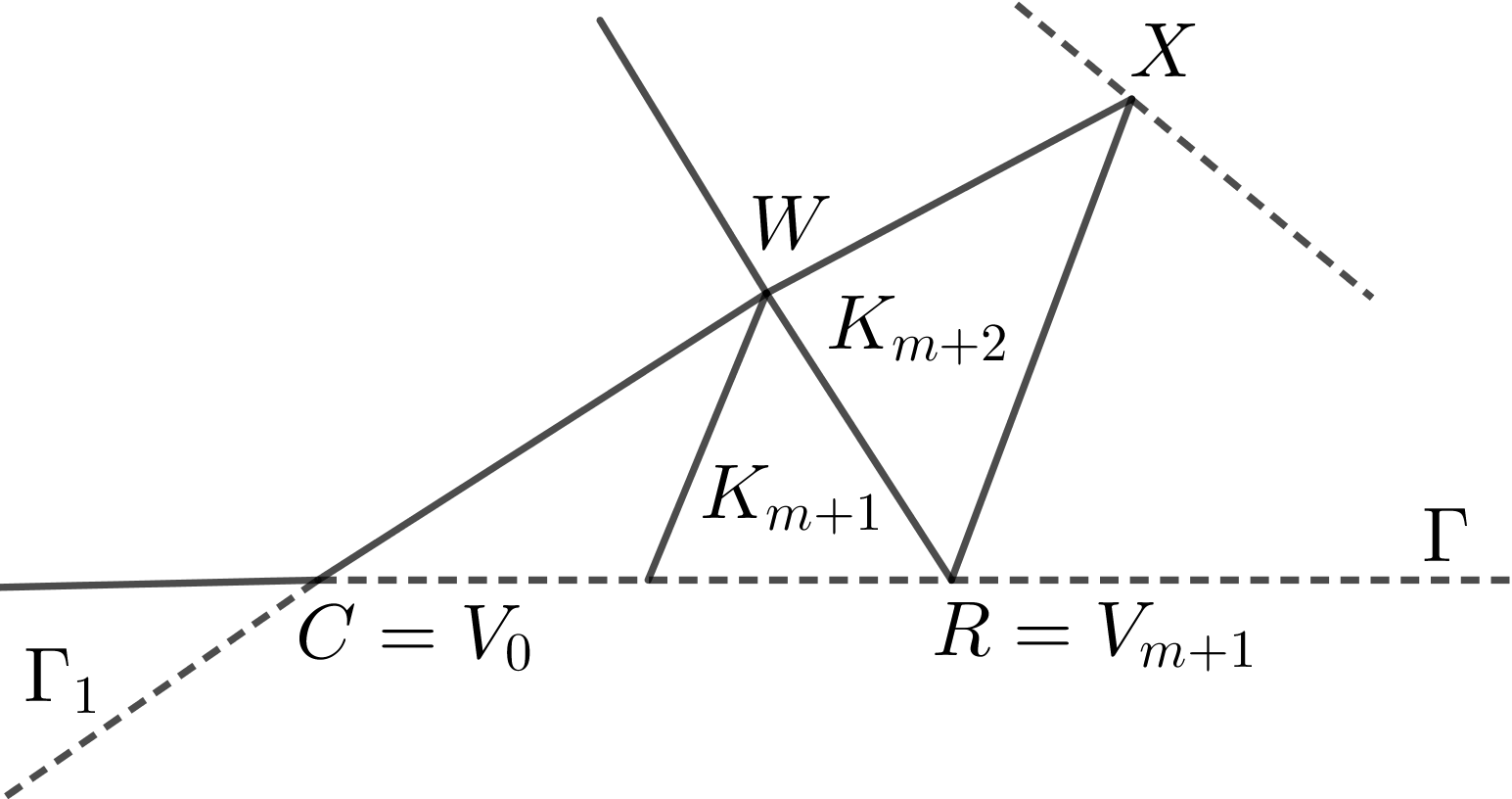}
}\caption{$\W, \X \notin \SS_h^{bc}$ and the  postprocessing starts at $K_{m+2}$.
 (dashed lines belong to $\p\O$.)}
\label{fig:cnrbwr}
\end{figure}

Now, we have calculated spurious pressures $s_h^i, s_h^{br}, s_h^{bc}$
 in \eqref{post:def:shi}, \eqref{post:def:shbr},  \eqref{post:def:shbc}.
Summing up them as
\begin{equation}\label{post:def:sh}
  s_h = s_h^i + s_h^{br} +s_h^{bc},
\end{equation}
and define $\tp_h\in M_h$ with the mean $\overline{s_h}$ of $s_h$ over $\O$ as
\begin{equation}\label{post:def:tph}
  \tp_h=p_h - (s_h - \overline{s_h}).
\end{equation}
Then, we reach at our final goal in the following theorem.
\begin{theorem}
  If $\u\in[H^5(\O)]^2, p\in H^4(\O)$, we have
  \begin{equation}\label{th:main}
    \norm{p-\tp_h }{0} \le C h^4(|\u|_{5} + |p|_4).
  \end{equation}
\end{theorem}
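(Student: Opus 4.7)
The plan is to assemble $p-\tp_h$ from the four error pieces already controlled in Theorem \ref{thm:ehGSR}, Lemma \ref{lem:errorbdyi}, Lemma \ref{lem:ehssbr}, and the estimate \eqref{post:est:ehssbc}, plus a standard projection error and a single constant offset; then to bound each piece in $L^2(\O)$.

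First I will decompose
\begin{equation*}
  p-\tp_h \;=\; (p-\Pi_h p) + (\Pi_h p - p_h) + (s_h - \osh) \;=\; (p-\Pi_h p) - e_h + (s_h - \osh),
\end{equation*}
and substitute the splittings \eqref{eq:spliteh}, \eqref{def:ehs}, \eqref{eq:splitss}, \eqref{eq:splitssb} of $e_h$ together with the definition \eqref{post:def:sh} of $s_h$. Inserting the mean $\m$ of $e_h^G + e_h^{SR}$ from Theorem \ref{thm:ehGSR} and rearranging gives
\begin{equation*}
  p - \tp_h \;=\; (p-\Pi_h p) - (e_h^G + e_h^{SR} - \m) + (s_h^i - e_h^{SSi}) + (s_h^{br} - e_h^{SSbr}) + (s_h^{bc} - e_h^{SSbc}) - (\m + \osh).
\end{equation*}

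Second, I will bound each of the first five summands in $L^2(\O)$ by $Ch^4(|\u|_5 + |p|_4)$. The first uses the standard approximation estimate for the continuous projection $\Pi_h p$ into piecewise cubics; the other four are precisely Theorem \ref{thm:ehGSR}, Lemma \ref{lem:errorbdyi}, Lemma \ref{lem:ehssbr}, and \eqref{post:est:ehssbc}.

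The only genuinely new step, and the one I expect to be the main obstacle, is bounding the constant $-(\m+\osh)$, since none of the earlier lemmas pin down the means of the individual correction differences. I will handle it by exploiting that both $p$ and $\tp_h$ lie in $L_0^2(\O)$, so $p-\tp_h$ has zero mean, and that $\int_\O (e_h^G+e_h^{SR}-\m)=0$ by the choice of $\m$. Integrating the displayed identity over $\O$ and applying Cauchy--Schwarz to each of the three remaining $O(h^4)$ differences together with the projection error, I obtain
\begin{equation*}
  |\m+\osh|\,|\O| \;\le\; C h^4 (|\u|_5 + |p|_4)\,\sqrt{|\O|},
\end{equation*}
so that $\|\m+\osh\|_0 = |\m+\osh|\sqrt{|\O|} \le Ch^4(|\u|_5 + |p|_4)$. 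The triangle inequality then assembles \eqref{th:main}.
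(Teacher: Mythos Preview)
Your proof is correct and follows essentially the same route as the paper. Both arguments decompose $\tp_h-\Pi_hp$ (or equivalently $p-\tp_h$) into the four pieces governed by Theorem~\ref{thm:ehGSR}, Lemma~\ref{lem:errorbdyi}, Lemma~\ref{lem:ehssbr}, and \eqref{post:est:ehssbc}, and then dispose of the residual constant. The only organisational difference is in that last step: the paper subtracts the mean from \emph{each} of the four pieces separately and uses $\m_1+\m_2+\m_3+\m_4+\osh=0$ to make the constants cancel exactly, whereas you keep a single leftover constant $-(\m+\osh)$ and bound it by integrating the identity over $\O$ and invoking $p,\tp_h\in L_0^2(\O)$. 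These are equivalent bookkeeping choices; your version makes the role of the zero-mean constraint slightly more explicit, while the paper's is marginally tidier since subtracting a mean never increases the $L^2$ norm.
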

\begin{proof} From \eqref{def:eh}, \eqref{eq:spliteh}, \eqref{def:ehs},
  \eqref{eq:splitss}, \eqref{eq:splitssb},
  \eqref{post:def:sh}, \eqref{post:def:tph}, we have
  \begin{equation}\label{post:spantph}
    \tp_h-\Pi_hp = e_h - s_h^i - s_h^{br} -s_h^{bc} + \osh =   e_h^G+e_h^{SR}+e_h^{SSi}+e_h^{SSbr}+e_h^{SSbc} - s_h^i - s_h^{br} -s_h^{bc} + \osh.
  \end{equation}
  Let $\m_1, \m_2,\m_3,\m_4$ be means of $e_h^G+e_h^{SR}, e_h^{SSi}- s_h^i,
  e_h^{SSbr}- s_h^{br}, e_h^{SSbc} -s_h^{bc}$ over $\O$, respectively.
  Then, since  the mean of $\tp_h-\Pi_hp\in M_h$ over $\O$ vanishes, we have
$    \m_1 + \m_2 +\m_3 + \m_4+\osh =0$.

Thus, we can rewrite \eqref{post:spantph} into
 \begin{equation*}\label{post:spantph2}
    \tp_h-\Pi_hp =  (e_h^G+e_h^{SR}-\m_1 ) + (e_h^{SSi}- s_h^i -\m_2)
    + (e_h^{SSbr}- s_h^{br}-\m_3) + (e_h^{SSbc} -s_h^{bc}-\m_4),
  \end{equation*}
  which establishes \eqref{th:main} by
\eqref{eq:mainstable}, \eqref{est:ehssbr},   \eqref{post:est:ehssbc}
and 
  Theorem \ref{thm:ehGSR}.
\end{proof}

\section{Numerical results}
We did numerical experiments in $\O=[0,1]^2$ with the velocity $\u$
and pressure $p$ such that
\[ \u=(s(x)s'(y), -s'(x)s(y)),\quad \ p=\sin(4\pi x)e^{\pi y},   \]
where $ s(t)=(t^2-t)\sin(2\pi t)$.

For triangulations with quasi singular vertices, we first formed the meshes of $\O$
with uniform squares and added a quasi singular vertex $\V$
in every squares so that $\V$ divides the diagonal of positive slope
with ratio $1.0005:1$ as in Figure \ref{fig:mesh}-(b).
An example of $8\times 8\times 4$ mesh is depicted in Figure \ref{fig:mesh}-(a).

A direct linear solver in {\texttt {LAPACK}}
was used on solving the discrete Stokes problem \eqref{prob:discr}.
Then, 
as in Figure \ref{fig:pandph}, the discrete pressure $p_h$ is spoiled by
spurious error at a glance. A closer look 
over 4 triangles in Figure \ref{fig:pandph2} shows the alternating characteristic of spurious error,
as predicted in \eqref{eq:spuqh}.

The postprocessed $\tp_h$ from $p_h$
shows that the spurious error in $p_h$
is removed as in Figure \ref{fig:postph}.
The errors in $\tp_h$ are also much less than those in $p_h$
as listed in Table \ref{table}.
Even in case of regular vertices as in Figure \ref{fig:mesh35},
the postprocessed  $\tp_h$ improved the error in pressure as in Table \ref{tabler}.

\begin{figure}[ht]
  \hspace{15mm}
  \subfloat[$\Th$ : $8\times 8\times 4$ mesh]
{\includegraphics[width=0.35\textwidth]{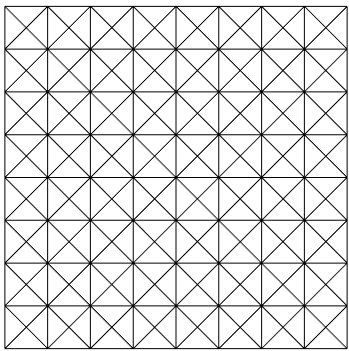}}
\qquad\qquad\qquad
\subfloat[$a:b=1.0005:1$]{\raisebox{6ex}
{\includegraphics[width=0.2\textwidth]{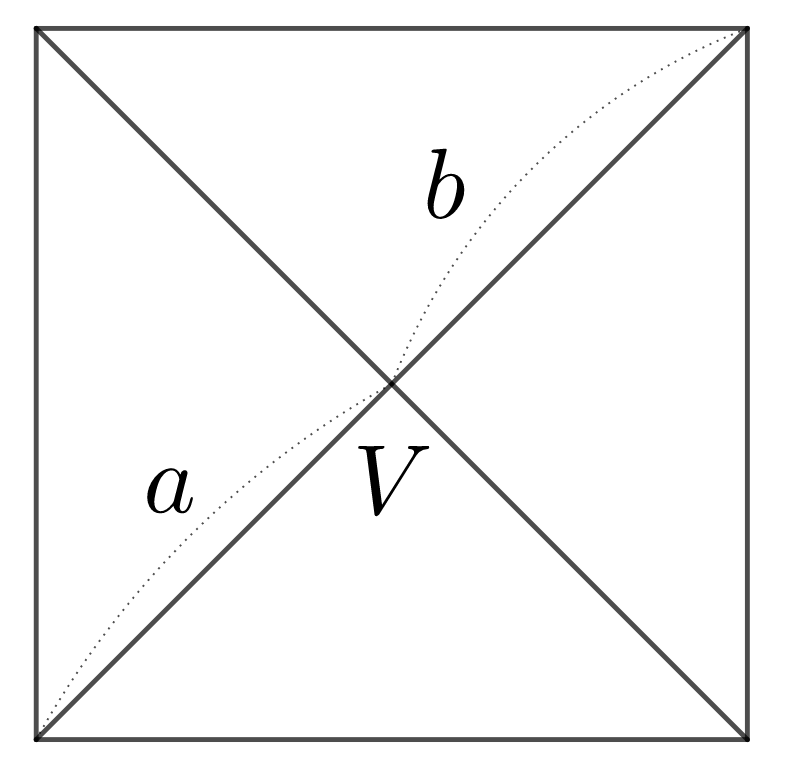}}}
\caption{An example of $\Th$ with
  a quasi singular vertex $\V$ in every squares}
  \label{fig:mesh}
\end{figure}

\begin{figure}[ht]
 \subfloat[$p$ over $\O$]
 {\includegraphics[width=0.47\textwidth]{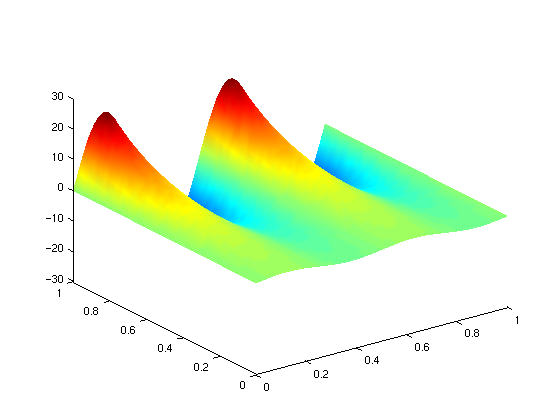}}
 \qquad
  \subfloat[$p_h$  over $\O$ ]
  {\includegraphics[width=0.47\textwidth]{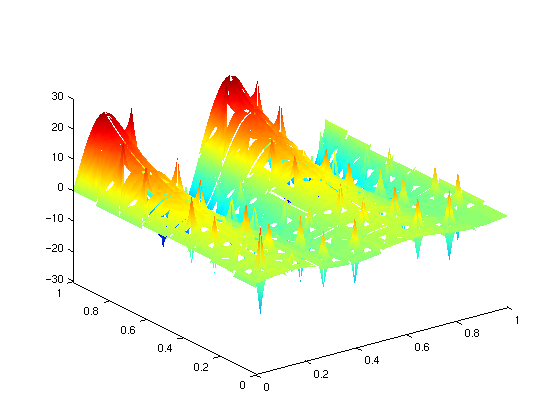}}
  \caption{Graphs of $p$ and $p_h$
    solved in $8\times 8\times 4$ mesh in Figure \ref{fig:mesh}}
  \label{fig:pandph}
\end{figure}

\begin{figure}[ht]
 \subfloat[$p$ over 4 triangles]
 {\includegraphics[width=0.47\textwidth]{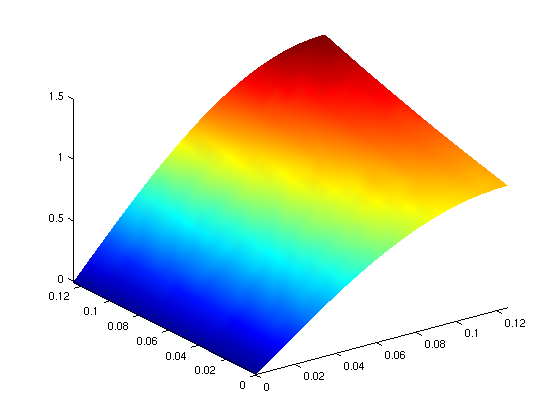}}
 \qquad
  \subfloat[$p_h$ over 4 triangles]
  {\includegraphics[width=0.47\textwidth]{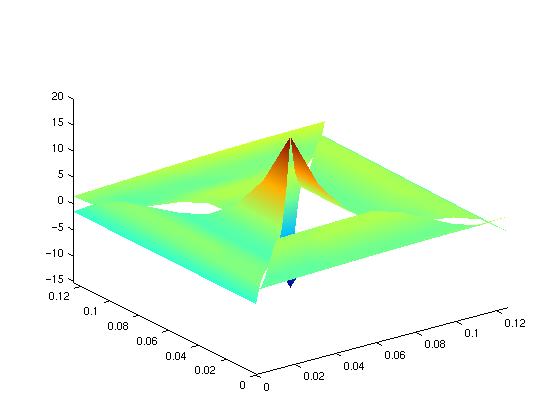}}
  \caption{Graphs of $p$ and $p_h$ over 4 triangles in $[0,1/8]^2$ }
  \label{fig:pandph2}
\end{figure}

\begin{figure}[ht]
 \subfloat[$\tp_h$ over $\O$]
 {\includegraphics[width=0.47\textwidth]{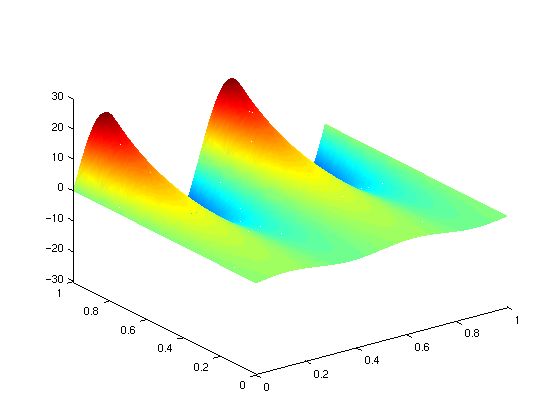}}
 \qquad
   \subfloat[$\tp_h$ over 4 triangles]
  {\includegraphics[width=0.47\textwidth]{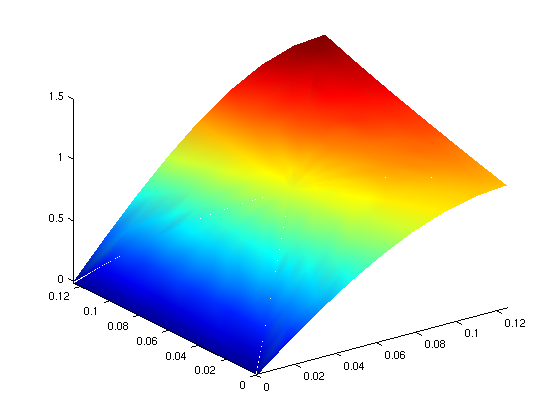}}
  \caption{Graphs of postprocessed $\tp_h$ from $p_h$}
  \label{fig:postph}
\end{figure}

\begin{table}
  \hspace{8mm}
\begin{tabular}{r || c c || c c || c c}
\hline
  mesh\hspace{3mm}
  & \hspace{1mm}$|\u-\u_h|_1$\hspace{1mm} & order\hspace{1mm}
     & \hspace{1mm}$\|p-p_h\|_0$\hspace{1mm}
  & order\hspace{1mm}& \hspace{1mm}$\|p-\tp_h\|_0$\hspace{1mm} & order \\
\hline
4 x 4 x 4 & 8.5504E-3 & & 2.2102E+1 & &5.7023E-2  & \\
 8 x 8 x 4 & 5.4471E-4 & 3.9724 &8.3012E-1 &  4.7347 & 2.6680E-3  & 4.4177\\
 16 x 16 x 4 & 3.3925E-5 & 4.0051 &2.6856E-2  & 4.9500 & 1.6624E-4  & 4.0044 \\
 32 x 32 x 4 & 2.1182E-6 &4.0014 &9.8863E-4  & 4.7637  &1.0380E-5  & 4.0014 \\
\hline
\end{tabular}
\caption{\label{table}Error table for meshes with quasi singular vertices
  as in Figure \ref{fig:mesh}}
\end{table}

\begin{figure}[ht]
  \hspace{15mm}
  \subfloat[$\Th$ : $8\times 8\times 4$ mesh]
{\includegraphics[width=0.35\textwidth]{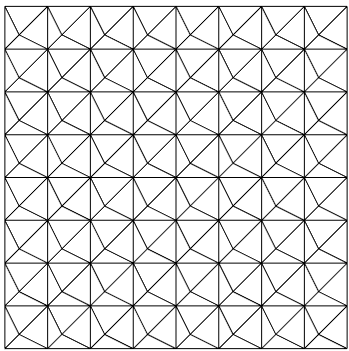}}
\qquad\qquad\qquad
\subfloat[$a:b=3:5$]{\raisebox{6ex}
{\includegraphics[width=0.2\textwidth]{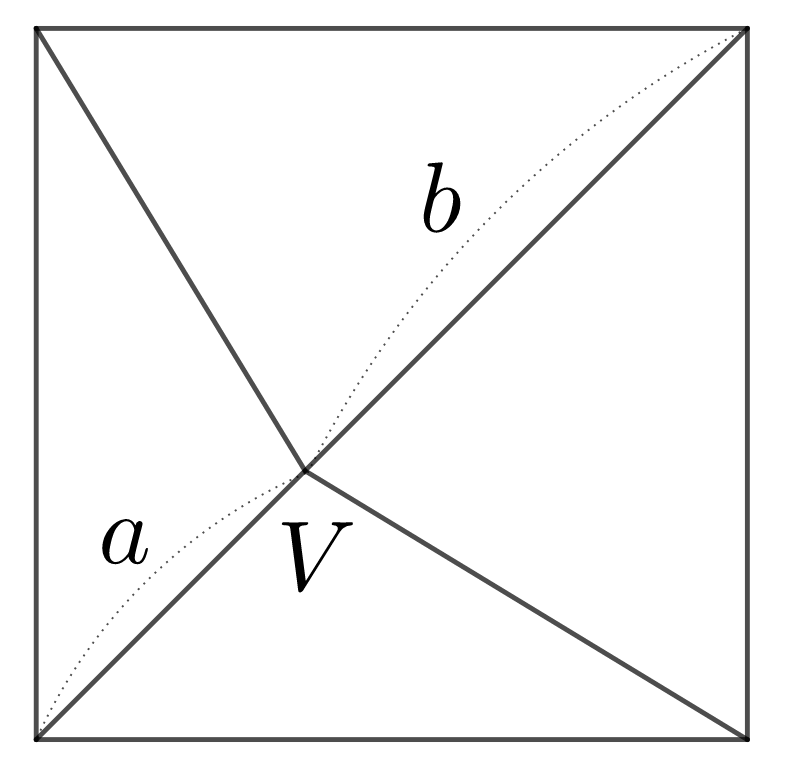}}}
\caption{$\Th$ with no quasi singular vertex}
  \label{fig:mesh35}
\end{figure}

\begin{table}
  \hspace{8mm}
\begin{tabular}{r || c c || c c || c c}
\hline
  mesh\hspace{3mm}
  & \hspace{1mm}$|\u-\u_h|_1$\hspace{1mm} & order\hspace{1mm}
     & \hspace{1mm}$\|p-p_h\|_0$\hspace{1mm}
  & order\hspace{1mm}& \hspace{1mm}$\|p-\tp_h\|_0$\hspace{1mm} & order \\
\hline
4 x 4 x 4 &1.3539E-2 & &9.8341E-2 & & 6.9479E-2  & \\
 8 x 8 x 4 &8.7627E-4 &3.9496 &5.6435E-3 &4.1231 & 3.4819E-3  &4.3186 \\
 16 x 16 x 4 &5.4353E-5 &4.0109 &3.4576E-4 &4.0287 &2.1298E-4  &4.0311 \\
 32 x 32 x 4 &3.3688E-6 &4.0120 &2.1285E-5 &4.0218 &1.3114E-5  &  4.0216\\
\hline
\end{tabular}
\caption{\label{tabler}Error table for meshes with no quasi singular vertex
  as in Figure \ref{fig:mesh35}}
\end{table}

\section*{Acknowledgments}
This paper was supported by Konkuk University in 2015.

\end{document}